
\documentclass[12pt]{amsart}
\usepackage[top=1.2in, left=1.2in, right=1.2in, bottom=1.2in]{geometry}
\usepackage[utf8x]{inputenc}
\usepackage{amsfonts}
\usepackage{amssymb,epsfig}
\usepackage{amsmath}
\usepackage{delarray}
\usepackage{graphicx}
\usepackage{color}
\usepackage{tensor}

\vfuzz2pt 
\hfuzz2pt 
\newtheorem{thm}{Theorem}[section]
\newtheorem{theorem}[thm]{Theorem}

\newtheorem{lemma}[thm]{Lemma}

\newtheorem{proposition}[thm]{Proposition}
\theoremstyle{definition}
\newtheorem{definition}[thm]{Definition}

\theoremstyle{remark}
\newtheorem{remark}[thm]{Remark}
\numberwithin{equation}{section}
\newtheorem{example}[thm]{Example}

\newcommand{\CC}{\mathbb{C}}

\newcommand{\KK}{\mathbb{K}}

\newcommand{\cL}{\mathcal{L}}

\newcommand{\intprod}{\!\mathbin{\hbox to 6pt
{\vrule height0.4pt width5pt depth0pt \kern-.4pt
\vrule height6pt width0.4pt depth0pt\hss}}\!}

\begin{document}

\title[Systems with singular underlying structures]{Local normal forms of dynamical systems with a singular underlying geometric structure}

\author{Kai Jiang}
\address{Beijing international Center for mathematical research, Peking University}
\email{kai.jiang@bicmr.pku.edu.cn}

\author{Tudor S. Ratiu}
\address{School of Mathematics, Shanghai Jiao Tong
University, 800 Dongchuan Road, Minhang District, Shanghai, 200240 China,
Section de Math\'ematiques, Universit\'e de Gen\`eve, 2-4 rue du
Li\`evre, Case postale 64, 1211 Gen\`eve 4, and Ecole Polytechnique
F\'ed\'erale de Lausanne, CH-1015 Lausanne, Switzerland. Partially
supported by the National Natural Science Foundation of China  grant
number 11871334 and by NCCR SwissMAP grant of the Swiss National Science Foundation. }
\email{ratiu@sjtu.edu.cn}

\author{Nguyen Tien Zung}
\address{Institut de Mathématiques de Toulouse, UMR5219, Université Toulouse 3}
\email{tienzung@math.univ-toulouse.fr}
\thanks{}

\begin{abstract}{ 
We prove, in many different cases, the existence of a simultaneous local 
normalization for couples $(X,\mathcal{G})$, where $X$ is a vector field 
vanishing at a point and $\mathcal{G}$ is a singular underlying 
geometric structure which is invariant with respect to $X$:
singular volume forms, singular symplectic and Poisson structures,
and singular contact structures. Similar  to Birkhoff normalization
for Hamiltonian vector fields, our normalization is also only formal,
in general. However, when $\mathcal{G}$ and $X$ are (real or complex) analytic
and $X$ is analytically   or Darboux integrable then the
simultaneous normalization is also analytic. Our proofs are based
on the toric approach to normalization of dynamical systems, the toric
conservation law, and the equivariant path method. We also consider the case
when $\mathcal{G}$ is singular but $X$ does not vanish at the origin.
}
\end{abstract}

\date{Second version, September 2019}
\subjclass{58K50}
\keywords{normalization, singular geometric structure}%

\maketitle

\begin{small}
\tableofcontents
\end{small}

\section{Introduction}

A classical result due to Birkhoff, Gustavson, and Moser
(see, e.g., \cite{Birkhoff1927,Gustavson-NF1966,Moser-Lectures1968})
states that a Hamiltonian vector field $X$ which
vanishes at a point $O$ admits a normalization \`a la Poincaré-Birkhoff
at $O$ (also in the resonant case), i.e.,
there is a coordinate system $(x_1,\hdots,x_{2n})$
in which $X$ does not contain any non-resonant terms and
the symplectic form has the canonical expression 
$\omega = \sum_{i=1}^n dx_i \wedge dx_{i+n}$.
A similar result, with a similar proof, is known to hold for volume-preserving
vector fields: if $X$ preserves a volume form $\Omega$ then there is a
coordinate system $(x_1,\hdots,x_{n})$ which normalizes $X$ and in
which $\Omega$ has the canonical expression 
$\Omega = dx_1 \wedge \hdots \wedge dx_{n}$. These normalizations of
Hamiltonian and volume-preserving vector fields are only \textit{formal},
in general. However, when $X$ is \textit{integrable} analytically or in
the sense of Darboux, then  there exists a locally
\textit{analytic normalization} \cite{Vey-Isochore1979,Zung-Poincare2002,Zung-Birkhoff2005,
Zung-NFIII2018}.
\medskip

The problem of normalization of Hamiltonian and volume-preserving vector fields
may be viewed as the problem of simultaneous normalization of a
couple $(X,\mathcal{G})$, where $\mathcal{G}$ is an underlying geometric
structure preserved by $X$. (In the above cases, $\mathcal{G}$ is a symplectic
or a volume form.)  In this paper, we  study this simultaneous
normalization problem, when the geometric structure $\mathcal{G}$ itself
is singular at the singular point $O$ of $X$. We may encounter singularities of different types and some singular structures have been studied by different people from various aspects, see, e.g., \cite{Martinet-Formes1970, Roussarie-Local1975,Pelletier-1Form1985,  Zhito-1Form1992,JaZhi-Contact2001,MZ-Poisson2006}.
We  concentrate our attention to the following list of singular geometric
structures, though many other structures (e.g.,
more general Poisson, Dirac, or Nambu structures)  appear in
geometric mechanics and physics:

$\bullet$ Singular volume forms, which either vanish  or blow up
    at the singular point $O$ of $X$ (e.g.,
    $x_1^k dx_1 \wedge \hdots \wedge d x_n$ where $k = \pm 1$).

$\bullet$ Singular symplectic forms, which are either folded
    symplectic or log-symplectic (e.g.,
    $\sum x_i^{k_i} dx_i \wedge dx_{n+i}$ where $k_i \in \{-1,0,1\}$).

$\bullet$ Singular contact structures.

In each of the above cases, we impose
some non-degeneracy or genericity 
condition ensuring that the singular geometric structure itself 
admits a nice normal norm, i.e., has some local \textit{canonical expression}, 
e.g., $\dfrac{1}{x_1} dx_1 \wedge dx_{n+1} + \sum_{i=2}^n dx_i \wedge dx_{n+i}$ 
for a so-called $b$-symplectic structure 
\cite{ GuLi-LogSymplectic2014,GLPR-LogSymplectic2017,
GMP-bSymplectic2014}, or analogous 
expressions for $b^m$-symplectic
and $b^m$-Nambu structures \cite{Miranda-Planas2018Nambu, 
Miranda-Planas2018Equivariant}  (even in the equivariant case, if needed).
Then we show that, in each of these cases, the couple $(X,\mathcal{G})$ 
admits a \textit{simultaneous normalization}, i.e., a coordinate
system which puts $\mathcal{G}$ in some canonical form and $X$ in normal form.

The simultaneous normalization problem is usually quite difficult, especially
when the underlying geometric structure itself is singular. As far as we know,
there are two main methods to study it.
\medskip

The first approach is classical:  one first puts the geometric structure 
$\mathcal{G}$ in canonical form, then makes a step-by-step normalization 
of the vector field $X$ (eliminating resonant nonlinear terms one by one) 
by ``canonical transformations" which leave $\mathcal{G}$ intact. This 
approach was effective for Hamiltonian and volume-preserving systems  and can 
probably be used also for other systems. However, to show the existence of 
the required ``canonical transformations" at each step is a non-trivial task, 
especially when the geometric structure is itself singular. For 
example, when the singular structure is not homogeneous, one cannot find 
canonical transformations order by order directly. In order to leave the 
geometric structure unchanged, one needs to adjust, at each step, the terms 
of the transformation having different orders; this results in very long,   
laborious, and delicate  computations.
\medskip

The second approach, initiated by the third author \cite{Zung-Poincare2002,
Zung-Birkhoff2005,MZ-Poisson2006,Zung-AA2018}, is geometric and is based on the toric 
characterization of the normalization of dynamical systems, together with the
equivariant Moser path method \cite{Moser-Volume1965} (a.k.a. the Lie 
transform method). This is the approach used in this paper and it goes as 
follows.

$\bullet$ For each vector field (or family of commuting vector fields) which
vanishes at a point $O$ there is a unique natural (intrinsic) local 
effective \textit{associated torus $\mathbb{T}^\tau$-action} $\rho$ (in 
the complexified space, if the original space is real) which fixes  $O$. 
The dimension $\tau$ of the torus in question is called the 
\textit{toric degree} of the system at $O$. This action $\rho$ is only formal, 
in general, but when the system is analytically or Darboux integrable, then 
it is automatically analytic.

$\bullet$ \textit{Universal toric conservation law}: any geometric structure 
$\mathcal{G}$ preserved by a dynamical system is also preserved by its 
associated torus action. This is one of our main results
and it applies to many other geometric structures, not just 
the ones studied in this paper; see Theorem \ref{thm:ConservationPropertyIntro} 
below for a precise statement.  

$\bullet$ For the geometric structures studied in this paper, it is known, 
or it can be shown, that $\mathcal{G}$ can be put into
canonical form in some coordinate system (if one forgets about $\rho$),
and it is also known that $\rho$ can be linearized by Bochner's averaging
formula \cite{Bochner-Compact1945} (if one forgets about $\mathcal{G}$).
What needs to be done do is to linearize $\rho$ and put $\mathcal{G}$ in 
canonical form \textit{simultaneously} which is achieved by the use of 
the equivariant Moser path method.

$\bullet$ The linearization of $\rho$ is actually equivalent to the
normalization of the dynamical system. So,   the previous step produces
a coordinate system in which both the geometric structure and the
dynamical system are normalized.

In general, the above steps only give a
\textit{formal} simultaneous normalization, just like in the classical
theory of Poincaré-Birkhoff normalization of vector fields.
The problem of showing the existence, or non-existence, of a local analytic
normalization is, in general, difficult since it involves small divisor
phenomena which are hard to control; see, e.g.,
\cite{Bruno-LocalMethods1989,Ito-Birkhoff1989,Stolovitch-Singular2000}.
However, when our system is analytically   or Darboux integrable,
then the associated torus action is analytic, the path method can also be done 
analytically, and we obtain an analytic simultaneous normalization without
having to deal with small divisors.
\medskip

Our approach (as well as the first classical one) works
mainly in the formal and analytic categories, so our results address
predominantly formal and analytic systems,
though some results are also valid in the smooth
category. (The proofs in the smooth category are usually more
involved and require some specific techniques.)

In this paper, most of the times we do not explicitly distinguish 
between the real  and  complex cases. Our results are valid for both complex  
and real systems, i.e., in the real case the normalization maps are also real,
though we only write explicit proofs for the complex case. The reason 
is that, by local complexification, a real
dynamical system (together with an underlying geometric structure)
can be viewed as a complex dynamical system which is equivariant with respect
to the \textit{anti-complex involution}. As was explained in, e.g.,
\cite{GuilleminSternberg-Linearization1968} and \cite{Zung-Birkhoff2005}, 
even if we deal with a real system,  everything can be done 
in the complexified space by keeping track of the anti-complex involution, 
so that the complex normalization maps in the real case turn out to be
equivariant with respect to the anti-complex involution, i.e., they are, 
in fact, complexifications of real normalization maps.  Of course, some 
``finer" normal forms, e.g., the Jordan normal form for matrices, look 
differently in complex and real coordinates, but this is not the main 
point of our paper.

\medskip

The paper is organized as follows. In Section \ref{section:Preliminary}, 
we collect necessary notions and tools
to be used for our study, including the
toric approach to the normalization problem of vector fields
(Subsection \ref{subsection:TorusActions}), the notions of
analytic and Darboux integrability (Subsection \ref{subsection:Integrable}),
the universal toric conservation law (Subsection \ref{subsection:conservation}), 
and the equivariant Moser path method (Subsection \ref{subsection:PathMethod}). 
The main new result of Section \ref{section:Preliminary} is the following 
theorem,  an important particular case (which has not been proved explicitly 
anywhere, as far as we know) of the universal toric conservation law stating
that ``\textit{anything which is preserved by a dynamical system is also 
preserved by its associated torus actions}'' \cite{Zung-AA2018}.

\begin{theorem}[Theorem \ref{thm:ConservationProperty}]
\label{thm:ConservationPropertyIntro}
Let $X$ be a formal vector field which vanishes at a point $O$ and which 
preserves a formal rational tensor field $\Lambda$ (i.e., $\Lambda = \Omega/f$
where $\Omega$ is a formal tensor field and $f$ is a non-trivial formal 
function). Then the associated torus action of $X$ at $O$ also preserves 
$\Lambda$. If $\Lambda$ is only conformally preserved by $X$ (i.e.,
$\cL_X \Lambda = g\Lambda$ where $g$ is a formal function), then the 
associated torus action of $X$ also conformally preserves $\Lambda$.
\end{theorem}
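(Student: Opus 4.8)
The plan is to reduce everything to the \emph{semisimple part} of $X$ and then invoke the toric characterization of normalization. After a formal Poincar\'e--Dulac normalization we may write $X = X_s + X_n$, where $X_s = \sum_i \lambda_i x_i \partial_{x_i}$ is the diagonal semisimple part and $X_n$ consists of resonant terms commuting with $X_s$. Recall that the associated torus action $\rho$ is built precisely so that a monomial tensor is $\rho$-invariant if and only if it lies in $\ker \cL_{X_s}$; more precisely, the space of formal tensor fields splits into weight spaces indexed by $\chi \in \ZZ^\tau$, the generator $Z_k$ of $\rho$ acts on the weight-$\chi$ space by the scalar $2\pi\sqrt{-1}\,\chi_k$, and $\cL_{X_s}$ acts there by $\iota(\chi)$, where $\iota \colon \ZZ^\tau \hookrightarrow \CC$ is the \emph{injective} homomorphism sending a torus weight to the corresponding eigenvalue $\langle a,\lambda\rangle$. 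The key structural observation is that $\cL_X = \cL_{X_s} + \cL_{X_n}$ is the Jordan--Chevalley decomposition of $\cL_X$ on the (filtered) space of formal rational tensor fields: $\cL_{X_s}$ is semisimple (weight-diagonal), while $\cL_{X_n}$ commutes with it and with $\rho$ (because $X_n$, being resonant, is $\rho$-invariant) and is pro-nilpotent, its lowest-order piece being $\cL_N$ with $N$ the linear nilpotent part.

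For the invariance statement I would decompose $\Lambda = \sum_\chi \Lambda_\chi$ into weight components. Since $\cL_{X_n}$ preserves weights, the equation $\cL_X\Lambda = 0$ splits as $(\iota(\chi) + \cL_{X_n})\Lambda_\chi = 0$ for each $\chi$. Fix $\chi \neq 0$; then $\iota(\chi)\neq 0$ by injectivity of $\iota$. Writing $\Lambda = \Omega/f$ guarantees that the degrees occurring in $\Lambda_\chi$ are bounded below, so $\Lambda_\chi$ has a well-defined lowest-degree term $\Lambda_\chi^{(d)}$; comparing lowest-degree parts gives $(\iota(\chi) + \cL_N)\Lambda_\chi^{(d)} = 0$. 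But $\cL_N$ is nilpotent on the finite-dimensional space of degree-$d$ weight-$\chi$ tensors, so $\iota(\chi) + \cL_N$ is invertible, forcing $\Lambda_\chi^{(d)} = 0$ and hence $\Lambda_\chi = 0$. Therefore $\Lambda = \Lambda_0$ is of pure weight zero, i.e. $\cL_{Z_k}\Lambda = 0$ for all $k$, which is the desired $\rho$-invariance.

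For the conformal statement I would pass to an extended space by adjoining one formal variable $y$ and setting $\tilde X := X - g\,y\partial_y$ and $\tilde\Lambda := y\Lambda$ on $M \times \CC$. A one-line computation gives $\cL_{\tilde X}\tilde\Lambda = (\tilde X y)\Lambda + y\,\cL_{\tilde X}\Lambda = -gy\Lambda + y\,g\Lambda = 0$, so $\tilde\Lambda$ is genuinely preserved and the invariance case applies to the couple $(\tilde X, \tilde\Lambda)$: the associated torus $\tilde\rho$ of $\tilde X$ preserves $y\Lambda$. Because the $y$-dynamics of $\tilde X$ is linear in $y$, its normalization is a gauge transformation affine in $y$, so $\tilde\rho$ acts by $(x,y)\mapsto(\rho(\theta)x,\,b_\theta(x)y)$; moreover the conformal weight $g(O) = \iota(\psi_0)$, being the $\cL_{X_s}$-eigenvalue of the lowest-degree part of $\Lambda$, already lies in the frequency lattice, so no new circle is created and $\tilde\rho$ restricts to $\rho$ on $M$. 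Unwinding $\tilde\rho(\theta)^*(y\Lambda) = y\Lambda$ then yields $\rho(\theta)^*\Lambda = b_\theta(x)^{-1}\Lambda$, i.e. $\rho$ conformally preserves $\Lambda$. The purely invariant rational case can alternatively be seen as a special case of this, since $\cL_X(\Omega/f) = 0$ is equivalent to $\cL_X\Omega = (Xf/f)\Omega$.

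The step I expect to be the main obstacle is making the Jordan--Chevalley decomposition of $\cL_X$ rigorous on the infinite-dimensional space of formal \emph{rational} tensor fields: one must check that $\cL_{X_n}$ is genuinely pro-nilpotent and, crucially, that the weight components of $\Lambda = \Omega/f$ have pole orders bounded below so that the lowest-degree comparison is legitimate; this is exactly where the rationality hypothesis must be used and controlled. A secondary delicate point, for the conformal case, is the functoriality needed to identify the restriction of $\tilde\rho$ with $\rho$, namely that the associated torus action is natural under the projection $M\times\CC \to M$ forgetting $y$, together with the verification that $g(O)$ lies in the frequency lattice so that passing to the extended system does not enlarge the torus.
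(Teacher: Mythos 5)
Your weight-space argument is correct \emph{as long as $\Lambda$ is an honest formal tensor field}: injectivity of $\iota$ (incommensurability of the $\rho_k$), the fact that the resonant part $X_n$ has weight zero so that $\cL_{X_n}$ preserves weight spaces, and the invertibility of $\iota(\chi)+\cL_N$ on each finite-dimensional graded piece are all sound, and this is a clean repackaging of what the paper does at the end of part (i) of its proof. The first genuine gap is exactly the point you flag but do not resolve: for a \emph{rational} tensor field $\Lambda=\Omega/f$ with $f(O)=0$, the decomposition $\Lambda=\sum_\chi\Lambda_\chi$ is not ``delicate'' --- it is undefined. A fraction like $1/(x_1+x_2^2)$ has no canonical expansion into weight-homogeneous pieces: expanding in powers of $x_2^2/x_1$ or of $x_1/x_2^2$ gives two different answers, and in the second one the degrees are unbounded below, so your claim that the degrees occurring in $\Lambda_\chi$ are bounded below has no meaning, and there is no lowest-degree term to compare. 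The missing idea (and it is the crux, not a technical check) is the paper's: write $\Omega/f$ in reduced form with $\Omega$, $f$ co-prime, deduce from the invariance that $f$ divides $\cL_X(f)$, hence that $\Omega$ and $f$ are \emph{separately} semi-invariant with a common formal but non-constant cofactor $\lambda$. At that point your splitting still does not apply, because $\cL_X\Omega=\lambda\Omega$ does not decouple weight by weight (the right side mixes weights through $\lambda$); one must first normalize the cofactor, multiplying $\Omega$ and $f$ by a unit $\beta$ so that the new $\lambda$ becomes a first integral of $X^s$ --- this is Walcher's Lemma (Lemma \ref{lemma:Walcher}), which your proposal never invokes --- and only then does an eigenvalue/weight argument finish the proof.

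The second gap is worse: the ``one-line computation'' in your conformal-case reduction is false for contravariant tensors. The Leibniz step $\cL_{\tilde X}(y\Lambda)=(\tilde Xy)\Lambda+y\,\cL_{\tilde X}\Lambda$ is fine, but $\cL_{\tilde X}\Lambda=g\Lambda-\cL_{gy\partial_y}\Lambda$, and $\cL_{gy\partial_y}\Lambda=0$ only when $\Lambda$ is purely covariant; each vector-field slot $\partial_{x_i}$ of $\Lambda$ produces a term $[gy\partial_y,\partial_{x_i}]=-(\partial_{x_i}g)\,y\partial_y$, so in general $\cL_{\tilde X}(y\Lambda)=-y\,\cL_{gy\partial_y}\Lambda\neq0$. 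Concretely, take $n=1$, $X=(x+x^2)\partial_x$, $\Lambda=\partial_x$, so $\cL_X\Lambda=g\Lambda$ with $g=-(1+2x)$; then $\tilde X=(x+x^2)\partial_x+(1+2x)y\partial_y$ and $\cL_{\tilde X}(y\partial_x)=-2y^2\partial_y\neq0$, so $\tilde\Lambda$ is \emph{not} preserved and the reduction to the invariant case collapses. Since the theorem concerns arbitrary formal rational tensor fields, and the paper applies it precisely to multi-vector fields (the Nambu structure in Theorem \ref{thm:NFLogVolume}, the Poisson structures of Section \ref{section:SingularSymplectic}), this failure cannot be waved away. Even in the covariant case you would still owe the naturality statements you defer (that the associated torus action of $\tilde X$ projects to that of $X$, and that $g(O)$ lies in the frequency lattice); the paper's treatment of the conformal case avoids the extension entirely, running instead through co-primality, the Walcher-type normalization of the cofactor (Step 2 of part (ii) of the proof of Theorem \ref{thm:ConservationProperty}), and the same eigenvalue argument as in part (i).
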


The case with an invariant singular volume form is treated in Section 
\ref{section:SingularVolume}, with an invariant singular symplectic form
in Section \ref{section:SingularSymplectic}, and with an invariant 
singular contact structure in Section \ref{section:SingularContact}. The 
main results of these last three sections can be put together in the following
big abstract theorem.

\begin{theorem}
Let $X$ be a formal vector field which vanishes at the origin and which 
preserves a singular geometric structure $\mathcal{G}$ which belongs to 
one of the following types: folded volume forms, generic singular Nambu 
structures of top order, (multi-)folded symplectic forms, log-symplectic 
forms, and three different kinds of singular contact structures. Then 
the couple $(X,\mathcal{G})$ can be formally normalized simultaneously. 
Moreover, in the integrable analytic case, when both $\mathcal{G}$ and 
$X$ are analytic and $X$ is analytically or Darboux integrable, then 
there exists a local analytic simultaneous normalization of $(X,\mathcal{G})$.
\end{theorem}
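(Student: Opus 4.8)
The plan is to treat all the structures in the list by a single uniform three-stage scheme, following the toric approach sketched in the introduction, and then to verify the stage-specific hypotheses case by case in Sections \ref{section:SingularVolume}, \ref{section:SingularSymplectic}, and \ref{section:SingularContact}. First I would attach to $X$ its intrinsic associated torus action $\rho : \mathbb{T}^\tau \times (\CC^n,O) \to (\CC^n,O)$. This action is formal in general but analytic when $X$ is analytically or Darboux integrable, so the two conclusions of the theorem (formal versus analytic) are governed by the single dichotomy of whether $\rho$ is formal or analytic. The crucial point, recalled in the introduction, is that linearizing $\rho$ is equivalent to putting $X$ into Poincaré--Birkhoff normal form; hence once $\rho$ is made linear, $X$ is automatically normalized.

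Second, I would invoke the universal toric conservation law, Theorem \ref{thm:ConservationPropertyIntro}, to transfer the invariance of $\mathcal{G}$ under $X$ to invariance under $\rho$. Each of the singular structures in the list is, locally, a rational tensor field $\Lambda = \Omega/f$ (for instance a $b$-symplectic form carries a factor $1/x_1$, and a singular volume form vanishes or blows up along a hypersurface), so Theorem \ref{thm:ConservationPropertyIntro} applies directly; for the structures that $X$ preserves only up to a conformal factor (certain contact and volume cases) I would use the conformal version of that theorem. This yields that $\mathcal{G}$ is invariant (resp.\ conformally invariant) under $\rho$. Combining this with Bochner's averaging, which linearizes $\rho$, I obtain a coordinate system in which $\rho$ is linear and $\mathcal{G}$ is $\rho$-invariant, although $\mathcal{G}$ need not yet be in canonical form.

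Third --- and this is the heart of the argument --- I would deform $\mathcal{G}$ to its canonical model $\mathcal{G}_0$ through a path $\mathcal{G}_t$, $t \in [0,1]$, of structures of the same type, all invariant under the now-linear $\rho$, and solve the associated Moser (homological) equation $\cL_{Y_t}\mathcal{G}_t = -\tfrac{d}{dt}\mathcal{G}_t$ \emph{equivariantly}, i.e.\ with the generating vector field $Y_t$ itself $\rho$-invariant. The time-one flow of $Y_t$ is then a $\rho$-equivariant change of coordinates carrying $\mathcal{G}$ to $\mathcal{G}_0$; because it commutes with $\rho$, it keeps $\rho$ linear, hence keeps $X$ in normal form, producing the desired simultaneous normalization.

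The main obstacle I expect lies in this last stage, and it must be handled separately for each structure type. Two difficulties arise: constructing the connecting path $\mathcal{G}_t$ so that it stays within the given class of singular structures while remaining $\rho$-invariant throughout; and, above all, solving the Moser equation across the singular locus of $\mathcal{G}$, where the nondegeneracy that usually guarantees solvability degenerates. Here the genericity hypotheses imposed on each $\mathcal{G}$ are exactly what is needed to control the singular behaviour and to produce a $\rho$-invariant solution $Y_t$ of the required (formal or analytic) regularity. In the analytic integrable case one must additionally check that $\rho$, the path, and the solution $Y_t$ are all analytic so that the time-one flow converges; but since $\rho$ is then analytic and the path method involves no small divisors, this presents no further trouble once the formal construction is in place.
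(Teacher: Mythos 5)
The three-stage scheme you describe (associated torus action, toric conservation law, equivariant Moser path) is indeed the paper's announced strategy, and it is essentially what the paper carries out for folded volume forms, singular Nambu structures of top order, and log-symplectic forms. But there is one genuine gap even in those cases: you linearize $\rho$ \emph{first} and then want a path from $\mathcal{G}$ to a canonical model $\mathcal{G}_0$ with every $\mathcal{G}_t$ invariant under the linear action, yet nothing in your argument explains why the canonical model $\mathcal{G}_0$, written in the Bochner-linearizing coordinates, is itself $\rho$-invariant — and this is precisely what Lemma \ref{lem:EquivariantPath} needs for both endpoints. The paper resolves this by reversing the order: first put $\mathcal{G}$ in canonical form using the structure theorem for $\mathcal{G}$ alone (Lemma \ref{lem:NFforOmega}, Lemma \ref{lem:LogVolume}, Theorem \ref{thm:log-symplectic1}); then, because the canonical model is \emph{homogeneous}, invariance of $\mathcal{G}$ under $\rho$ forces invariance under the linear part $\rho_1$ of $\rho$; then Bochner gives $\Phi$ with identity linear part intertwining $\rho$ and $\rho_1$, and the path runs between $\mathcal{G}$ and $\Phi^*\mathcal{G}$, both $\rho_1$-invariant, whose difference vanishes at $O$ and is divisible by the singular factor — which is exactly what makes the Moser equation solvable after dividing out that factor.

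More seriously, the uniform scheme is not what the paper does for two of the listed structures, and it would run into real trouble there. For (multi-)folded symplectic forms the canonical model is only \emph{quasi}-homogeneous (weights $2$ and $3$), so the linear part of $\rho$ need not preserve it, and the Moser equation $Y_t \intprod \omega_t = \xi$ cannot in general be solved across the fold, where contraction with $\omega_t$ fails to be surjective; the paper avoids the path method entirely, instead averaging the quasi-Euler vector field $E$ over the group, equivariantly diagonalizing $E_\rho$ (possible because its linear part is non-resonant and in the Poincaré domain), and reading off from the eigenvalue equation $\cL_{E_\rho}\omega = 6\omega$ that $\omega$ is automatically in canonical form in the diagonalizing coordinates. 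For singular contact forms there is no finite canonical model to deform to: the normal form \eqref{eq:PrimitiveForm} carries functional moduli (the $Q_i$ and the resonant function $R$), so ``deform $\mathcal{G}$ to $\mathcal{G}_0$'' is not even well posed. The paper instead proves (Lemma \ref{lemma:Contact-V.F.}, Lemma \ref{lemma:Tangent-V.F.}) that $X$ is $\theta$-independent, reduces to the primitive $1$-form $\gamma$ on $\{\theta=0\}$, shows $[X,Z]=0$ for the associated vector field $Z$ defined by $Z\intprod d\gamma = \gamma$, and applies the path method only to the \emph{regular} symplectic form $d\gamma$, arranging it to preserve the semisimple parts of $X$ and $Z$. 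So where your proposal promises one argument, the full theorem actually requires three genuinely different ones, and the two you omit are the ones whose difficulties your scheme cannot absorb.
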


We also treat the cases of couples $(X,\mathcal{G})$ where the geometric
structure is singular at the origin $O$ but $X(O) \neq 0$.
In a series of theorems throughout this paper, we show that,
in such situations, $\mathcal{G}$ can often be put into normal form in a
coordinate system in which the vector field $X$ becomes $\partial/\partial x_k$
where $x_k$  is one of the coordinates. These theorems (for the cases
$X(O) \neq 0$) use more classical step-by-step normalization methods and not
the toric approach for $X$.

\section{Preliminaries}
\label{section:Preliminary}

\subsection{Associated torus actions}
\label{subsection:TorusActions}

In this subsection, we briefly recall from \cite{Zung-Poincare2002, 
Zung-Birkhoff2005, Zung-IntegrableChapter2016, Zung-AA2018} the toric approach
to the problem of local normalization of vector fields.

Denote by $X$ a formal or local analytic vector field
on a manifold, which vanishes at a point $O$. We write the Taylor series of
$X$ in a coordinate system $(x_1,\hdots,x_n)$ around $O$ as
\[
    X= X^s + X^n + \sum_{i \geq 2} X^{(i)},
\]
where $X^{(1)} = X^s + X^n$ is the Jordan decomposition of the linear part 
of $X$ in the above coordinate system, i.e. $X^s$ is its semisimple part 
and $X^n$ is its nilpotent part, and $X^{(i)}$ is the homogeneous term of 
degree $i$ of $X$. We can assume (after a complexification of the system, 
if necessary) that $X^s$ is diagonal:
\[
    X^s= \sum_{i=1}^n \gamma_i x_i \dfrac{\partial}{\partial x_i},
\]
where $\gamma_1, \hdots, \gamma_n \in \mathbb{C}$ are the eigenvalues of 
$X$ at $O$. We say that $X$ is in \textit{normal form} (\`a la 
Poincaré--Birkhoff or Poincaré--Dulac) if
\[
    [X^s, X] = 0,
\]
which amounts to the vanishing of all the non-resonant terms in the Taylor 
expansion of $X$. The classical theorem of Poincaré says that such a formal 
coordinate system always exists, i.e., $X$ can always be normalized formally.

We may view $X$, via its Lie derivative,
as a linear differential operator on the space of formal functions.
Then this operator, even though acting on an infinite dimensional space, 
also admits an intrinsic unique Jordan decomposition
\[
    X = X^S + X^N
\]
where $X^S$ and $X^N$ are formal vector fields (also viewed as linear operators)
which are intrinsic (i.e., they depend on $X$ but do not depend on the choice 
of local coordinates). The vector field $X$ is in normal form if and only if 
$X^S$ is linear semisimple, i.e. $X^S = X^s$ and $X^N = X^n + X^{(2)} + \cdots$.

The smallest number $\tau \geq 0$ such that we can express $X^S = X^s$
(in normalized coordinates) in the form
\begin{equation}\label{eq:Diagonal.VFa}
    X^S = \sum_{i=1}^\tau \rho_i  Z_i,
\end{equation}
where $\rho_i \in \mathbb{C}$ are constants and
\begin{equation}\label{eq:Diagonal.VF}
    Z_i = \sum_{j=1}^n a_{ij} x_j \dfrac{\partial}{\partial x_j}
\end{equation}
are  diagonal vector fields with integer coefficients $a_{ij} \in \mathbb{Z}$,
is called the \textit{\textbf{toric degree}} of $X$ at $O$. The minimality 
condition on $\tau$ is equivalent to the condition that 
$\rho_1,\hdots,\rho_\tau$ are incommensurable.
The vector fields $\sqrt{-1}Z_1,\hdots, \sqrt{-1}Z_\tau$ are the generators
of an effective  torus $\mathbb{T}^\tau$-action $\rho$, which is uniquely 
determined by $X$ (up to automorphisms), and which is called the intrinsic 
\textit{\textbf{associated torus action}} of $X$ at $O$.

In general, even when $X$ is analytic, its normalization is only formal, and 
its associated torus action $\rho$ is also formal. Thanks to Bochner's 
averaging formula \cite{Bochner-Compact1945}, any compact group action (even 
formal ones) can be linearized, and the normalization of $X$ is the same as 
the linearization of its associated torus action $\rho$ (i.e., \textit{$X$ 
is in normal form if and only if $\rho$ is linear}). The  vector field 
$X$ admits a local analytic normalization if and only if the action
$\rho$ is analytic. So this torus action $\rho$ plays a very important role 
in normalization problems  and it admits the \textit{fundamental conservation 
property} \cite{Zung-AA2018} which will be discussed
in Subsection \ref{subsection:conservation}.

\subsection{Integrable systems}
\label{subsection:Integrable}

Recall  that a \textit{\textbf{Darboux-type function}} is a (generally 
multi-valued) function $F$ which can be written as
$$
F = \prod_{k=1}^m F_k^{c_k},
$$
where $F_k$ are analytic functions and $c_k$ are complex numbers. The class 
of Darboux-type functions on a manifold is significantly larger than the 
class of analytic or rational functions. Note that, even though 
$F = \prod_{k=1}^m F_k^{c_k}$ is multivalued, its logarithmic differential
$$ \dfrac{dF}{F} = d(\ln F) = \sum _{k=1}^m c_k \dfrac{dF_k}{F_k}$$
is a single-valued rational differential 1-form,  so it makes  
sense to talk about Darboux-type first integrals of vector fields:
$X(\prod_{k=1}^m F_k^{c_k}) = 0$ means that
$\sum _{k=1}^m c_k \dfrac{X(F_k)}{F_k} = 0$.
In many problems in dynamical systems, one may be interested in Darboux-type 
first integrals when there do not exist enough analytic or rational first 
integrals (see., e.g., \cite{Zhang-Integrability2017}).

In this paper, for analytic normalization, we will consider dynamical systems 
which are integrable in the usual non-Hamiltonian sense (see, e.g., 
\cite{Bogoyavlenskij-Integrable1996, Stolovitch-Singular2000, Zung-Poincare2002, 
Zung-AA2018}), also in the case when the first integrals are only Darboux-type 
functions. Let us recall here the definition of integrability.

\begin{definition}
i) A dynamical system given by a vector field $X$ on an $n$-dimensional
manifold $M$ is called \textbf{\textit{integrable}} if there exist $p$
vector fields $X_1=X$, $X_2,\ldots,X_p$ and $q$ functions $F_1,\ldots,F_q$ 
on $M$, such that the vector fields commute pairwise, 
$X_1\wedge\cdots\wedge X_p\neq0$ almost everywhere, the functions are common 
first integrals for these vector fields, and 
$\mathrm dF_1\wedge\cdots\wedge\mathrm dF_q\neq0$ almost everywhere.
The integers $p,q$ satisfy $p\geqslant1,q\geqslant0,p+q=n$.

ii) An $n$-tuple $(X_1,\ldots,X_p,F_1,\ldots,F_q)$ such as above is also
 called an \textit{\textbf{integrable system of type $(p,q)$}}.

iii) If the vector fields $X_1,\ldots,X_p$ and the functions $F_1,\ldots,F_q$
are all smooth (resp. formal, resp. analytic) then the vector field $X$ and the 
system $(X_1,\ldots,X_p,F_1,\ldots,F_q)$ are called \textit{\textbf{smoothly}}
(resp. \textit{\textbf{formally}}, resp. \textit{\textbf{analytically}}) 
\textbf{\textit{integrable}}.

iv) If the vector fields $X_1,\ldots,X_p$ are rational (i.e. can be written 
as the quotient of an analytic vector field by a non-trivial analytic function), 
and the functions  $F_1,\ldots,F_q$ are Darboux-type functions, then $X$ and 
the system $(X_1,\ldots,X_p,F_1,\ldots,F_q)$ are called 
\textit{\textbf{Darboux integrable}}.
\end{definition}

The following result, obtained by the third author via geometric approximation 
methods (instead of the fast convergence method), will allow us to analytically 
normalize integrable systems.

\begin{theorem}[\cite{Zung-Poincare2002,Zung-Birkhoff2005,Zung-NFIII2018}]
Let $X$ be a local analytic vector field which vanishes at a point $O$ and 
which  is analytically integrable or Darboux integrable. Then the associated 
torus action of $X$ at $O$ is  locally analytic.
\end{theorem}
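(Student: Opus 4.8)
The plan is to avoid the Poincaré--Dulac recursion entirely --- that is precisely where small divisors enter and where convergence generically fails --- and instead to build the torus action directly from the geometry of the integrable foliation, and then to identify this geometric object with the intrinsic associated torus action $\rho$ by a uniqueness argument. So the scheme has three stages: (i) use integrability to produce an \emph{analytic} torus action on a punctured neighborhood of $O$; (ii) extend it analytically across $O$ by the geometric approximation method; (iii) show it must coincide with $\rho$, forcing $\rho$ to be analytic.

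First I would fix a witnessing integrable structure $(X_1 = X, X_2, \ldots, X_p, F_1, \ldots, F_q)$ with $p + q = n$. Away from a proper analytic subset the commuting analytic fields span a $p$-dimensional involutive distribution whose leaves lie inside the common level sets $\{F_1 = c_1, \ldots, F_q = c_q\}$; these are the Liouville-type leaves. The semisimple operator $X^S$ is intrinsic, so $\rho$ commutes with every $X_i$, and by the universal toric conservation law (Theorem \ref{thm:ConservationPropertyIntro}) it preserves each $F_j$. Hence $\rho$ is tangent to this very foliation, and the goal becomes to realize $\rho$ as a genuine torus of rotations of the leaves. On a generic leaf the (complexified) flows of the $X_i$ give a locally free action of $\mathbb{C}^p$ whose orbit closures are tori, and the induced rotation defines a torus action that depends analytically on the transverse level-set parameters because the $F_j$ and $X_i$ are analytic; this is the analytic action $\rho_{\mathrm{an}}$ on the complement of the singular locus. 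Its closed one-parameter subgroup reproducing the flow of $X^S$ is exactly the object I want.

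Next I would extend $\rho_{\mathrm{an}}$ across $O$. This is where Zung's geometric approximation method \cite{Zung-Poincare2002,Zung-Birkhoff2005} does the work: one approximates $\rho_{\mathrm{an}}$ by analytic torus actions agreeing with the formal $\rho$ to higher and higher order and shows, using the integrable geometry to furnish a priori bounds in place of Diophantine estimates, that these converge analytically on a fixed neighborhood of $O$. Once the analytic extension exists, uniqueness closes the argument: the associated torus action is attached to $X$ intrinsically through the Jordan decomposition $X = X^S + X^N$ and is unique up to automorphism, so any analytic torus action whose generators recover $X^S$ must coincide with $\rho$ up to reparametrization. Therefore $\rho$ is analytic. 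The Darboux case is identical once one replaces the level sets of analytic first integrals by the joint level sets of the single-valued logarithmic forms $dF_k/F_k$ and allows the $X_i$ to be analytic fields divided by analytic functions; the tori and their rotations are still cut out by analytic/rational data, so the same convergence argument goes through.

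The main obstacle, I expect, is precisely the extension across $O$. On the punctured neighborhood $\rho_{\mathrm{an}}$ is visibly analytic by the implicit function theorem applied to the analytic foliation, but controlling the geometry as the Liouville tori collapse onto the fixed point --- equivalently, proving that the geometric approximations converge \emph{uniformly} near $O$ rather than merely leaf by leaf --- is the heart of the matter and the reason a naive order-by-order normalization cannot succeed.
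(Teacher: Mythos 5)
A preliminary remark on the comparison: the paper itself contains no proof of this statement --- it is imported as a known result of \cite{Zung-Poincare2002,Zung-Birkhoff2005,Zung-NFIII2018}, obtained there by the geometric approximation method --- so your proposal has to stand on its own, and it does not. The decisive gap is that your stage (ii) is circular. The entire analytic content of the theorem is the convergence/extension step, and you discharge it by invoking ``Zung's geometric approximation method'' from exactly the references the theorem is quoted from, plus a one-sentence paraphrase of what that method is supposed to deliver (``a priori bounds in place of Diophantine estimates''). You never say what quantity is being estimated, on what fixed domain, or why integrability yields uniformity as the leaves degenerate at $O$; indeed you acknowledge this step is ``the heart of the matter.'' A proof that delegates the heart of the matter to the papers being proved is not a proof.

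There is a second, more mathematical, gap: stage (i) is false as stated, not merely incomplete. In the local complexified picture the $\mathbb{C}^p$-orbits are non-compact, so ``orbit closures are tori'' is wrong; a torus could only come from the isotropy (period) lattice of the $\mathbb{C}^p$-action. And in the Darboux case even that fails: the associated torus action need not act along the leaves at all. Take $X=\gamma_1 x_1\,\partial/\partial x_1+\gamma_2 x_2\,\partial/\partial x_2$ on $(\mathbb{C}^2,O)$ with $\gamma_1/\gamma_2$ real, positive and irrational. This is Darboux integrable with $p=q=1$ and $F=x_1^{c_1}x_2^{c_2}$, $c_1\gamma_1+c_2\gamma_2=0$, and its toric degree is $\tau=2$, the associated action being $(x_1,x_2)\mapsto(e^{i\theta_1}x_1,e^{i\theta_2}x_2)$. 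The leaves of $\ker(dF/F)$ are precisely the $X$-orbits; each regular orbit is an injectively immersed copy of $\mathbb{C}$ with trivial period lattice, whose closure is a real three-dimensional set (not a torus), and only a dense, non-closed one-parameter subgroup of $\mathbb{T}^2$ maps a given leaf into itself. The full $\mathbb{T}^2$ merely permutes the leaves, consistent with the fact that it only conformally preserves $F$, as in part (ii) of Theorem \ref{thm:ConservationProperty}. So there is no ``torus of rotations of the leaves'' to construct, and a leafwise construction can at best detect the period lattice, which here has rank $0<\tau$. Your claim that ``the Darboux case is identical'' is therefore exactly backwards: it is the case where the foliation-geometric picture breaks down, and any correct argument must produce the torus by a different mechanism (in the cited works, essentially by extracting limits of flow maps via compactness arguments that the first integrals make available), which is precisely what your proposal omits.
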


\subsection{The toric conservation law}
\label{subsection:conservation}

The \textit{toric conservation law}, which states that \textit{anything which 
is preserved by a dynamical system is also preserved by its associated torus 
action}, was first discovered in its general form by the third author
\cite{Zung-AA2018}, though some particular cases of this law
(e.g., when that ``anything" is a function, i.e., a first integral) have 
been known before. Here, the system may be given by a vector field, or a 
discrete-time system, or a quantum system, or a stochastic system, etc.; 
``anything" may be a tensor field, or a subbundle of a natural bundle over the 
manifold, or a differential operator, etc., which may be smooth, analytic, 
meromorphic, formal, Darboux-type, etc.. The associated torus actions 
in question depend on the situation: for example, for integrable 
Hamiltonian systems near a regular Liouville torus,   this torus action is 
the Liouville torus action, whereas if the system is a local dynamical system 
which vanishes at a point, then this torus action is the one explained 
in Subsection  \ref{subsection:TorusActions}.

For each particular situation, the toric conservation law can be stated as 
a rigorous theorem or conjecture. There are still many situations for which no 
proof of the corresponding conjectures have been explicitly written.
In this section we consider such a situation, which is useful for 
our simultaneous normalization problems.

Consider a formal vector field $X$ on $\mathbb{C}^n$ which vanishes at the 
origin and let $\Lambda$ be a formal rational tensor field  on $\mathbb{C}^n$,
i.e., $\Lambda$ is the quotient of a formal tensor field $\Omega$ by a 
nontrivial formal function $f$: $\Lambda = \dfrac{\Omega}{f}$. We   say 
that $\Lambda$ is an \textit{\textbf{invariant}} of $X$ if it is 
\textit{\textbf{conserved}} by $X$, i.e., $\cL_X\Lambda=0$. We   say 
that $\Lambda$ is a \textit{\textbf{semi-invariant}} of $X$ if it is 
\textit{\textbf{conformally conserved}} by $X$, i.e., there is a formal 
function $g$ such that $\cL_X\Lambda= g\Lambda$.

Observe that the set of nontrivial semi-invariant formal rational functions 
of $X$ form a multiplicative group: if
$\cL_X F_1 = g_1 F_1$ and $\cL_X F_2 = g_2 F_2$
then $\cL_X (1/F_1) = - g_1 (1/F_1)$ and
$\cL_X (F_1F_2) = (g_1+g_2) F_1F_2$.
Moreover, if $\Lambda = \dfrac{\Omega}{f}$ is a formal rational tensor field 
written in reduced form, i.e. $\Omega$ and $f$ are co-prime,
and $\Lambda$ is a semi-invariant tensor field of $X$, then both
$\Omega$ and $f$ are semi-invariants of $X$.
Indeed, $\cL_X ( \Omega/f ) = g  \Omega/f$ means
that $f\cL_X(\Omega) - \cL_X(f) \Omega = gf\Omega$, or
$\cL_X(f)\Omega = f(\cL_X\Omega - g\Omega)$. Since $\Omega$ is co-prime with 
$f$, it implies that $\cL_X(f)$ is divisible by $f$, i.e., 
$\cL_X(f) = hf$, where $h$ is a formal function, and then we have 
$\cL_X\Omega = (g+h)\Omega$.

\begin{theorem}[Conservation Theorem]
\label{thm:ConservationProperty}
 Let $X$ be a formal vector field on $\mathbb{C}^n$ which vanishes at the 
 origin.  Assume that $X$ is already in Poincaré-Dulac normal form  and that  
 $(Z_1,\ldots,Z_\tau)$ is a $\tau$-tuple
 of diagonal vector fields which generate the associated torus action of $X$ 
 as in \eqref{eq:Diagonal.VFa}, \eqref{eq:Diagonal.VF}. Let
 $\Lambda$ be a formal rational tensor field on $\mathbb{C}^n$.
 Then we have:

{\rm (i)} If $\Lambda$ is preserved by $X$, i.e.,  $\cL_X\Lambda=0$,
 then $\Lambda$ is also preserved by the associated torus
 action of $X$, i.e., $\cL_{Z_k}\Lambda=0$ for every
 $k=1,\hdots, \tau$.

{\rm (ii)} If $\Lambda$ is conformally preserved by $X$, i.e.,
 $\cL_X\Lambda=g\Lambda$, where $g$ is a formal function, then
 $\Lambda$ is also conformally preserved by the
 associated torus action of $X$, i.e., $\cL_{Z_k}\Lambda=g_k\Lambda$ for every 
 $k=1,\hdots, \tau$, where $g_1,\hdots, g_\tau$ are formal functions.
\end{theorem}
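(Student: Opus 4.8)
The plan is to reduce everything to a single proportionality lemma for genuine semi-invariants and then to exploit that the generators $Z_k$ are diagonal linear vector fields, so that $\sqrt{-1}Z_1,\dots,\sqrt{-1}Z_\tau$ generate an honest linear $\mathbb{T}^\tau$-action $\Phi$ on $\CC^n$ (not merely a formal one), under which formal functions and tensors split into weight spaces indexed by $\ZZ^\tau$, finite-dimensional in each degree. Two structural facts come first. Writing $X=\sum_w X_w$ in torus weights, the normal form condition $[X^s,X]=0$ reads $\langle\rho,w\rangle X_w=0$ for every $w$, and the incommensurability of $\rho_1,\dots,\rho_\tau$ forces $X_w=0$ for $w\neq0$; hence $[Z_k,X]=0$ for all $k$, so $X$ is $\Phi$-invariant and each $\cL_{Z_k}$ commutes with $\cL_X$. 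Moreover, by the normal form assumption $\cL_{X^s}=\sum_k\rho_k\cL_{Z_k}$ is exactly the intrinsic semisimple part of the operator $\cL_X$. Finally, by the computation preceding the statement, a semi-invariant rational tensor $\Lambda=\Omega/f$ in reduced form has both $\Omega$ and $f$ semi-invariant, so it suffices to treat a genuine formal semi-invariant tensor field (and, as the special case of type $(0,0)$, a semi-invariant function).

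The heart of the argument is the proportionality lemma: if $T$ is a genuine formal tensor with $\cL_X T=hT$, then $\cL_{Z_k}T=a_k T$ for formal functions $a_k$. I would prove it as follows. The $\CC$-subspace $\mathcal W:=\CC[[x]]\cdot T$ is $\cL_X$-invariant, since $\cL_X(\psi T)=(X\psi+\psi h)T\in\mathcal W$. Because $\cL_{X^s}$ is the semisimple part of $\cL_X$, it preserves every $\cL_X$-invariant subspace; truncating modulo tensors of degree $>N$ reduces this to the elementary finite-dimensional fact that the semisimple part of an operator preserves its invariant subspaces, and letting $N\to\infty$ (using that $\mathcal W$ is closed in the formal topology) gives $\cL_{X^s}T\in\mathcal W$, i.e. $T$ is $X^s$-semi-invariant. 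Equivalently, $\mathcal W$ is invariant under the flow of $X^s=\sum_k\rho_k Z_k$. Since the $\rho_k$ are incommensurable this flow is a dense winding line in $\mathbb T^\tau$, and as $\Phi$ preserves the degree filtration, invariance of the finite-dimensional spaces $\mathcal W\cap\{\deg\le N\}$ under a dense one-parameter subgroup upgrades to invariance under all of $\mathbb T^\tau$; hence $\cL_{Z_k}T\in\mathcal W$, which is the claim.

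With the lemma in hand the two cases follow quickly. Applying it to $\Omega$ and to $f$ gives $\cL_{Z_k}\Omega=a_k\Omega$ and $Z_k f=b_k f$, whence $\cL_{Z_k}\Lambda=(a_k-b_k)\Lambda$; this already proves the conformal statement (ii) with $g_k=a_k-b_k$. For the invariant statement (i) I must further show $g_k=0$. Here $\cL_X\Lambda=0$ together with $[Z_k,X]=0$ forces $\cL_X(g_k\Lambda)=(Xg_k)\Lambda=0$, so each $g_k$ is a formal first integral of $X$; the clean invariant argument for functions (project onto weights and note that a weight-$w$ component is an eigenvector of the locally nilpotent operator $\cL_X-\cL_{X^s}$ with eigenvalue $-\langle\rho,w\rangle$, hence vanishes unless $w=0$) shows $g_k$ is weight-zero, hence $\Phi$-invariant. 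Then $\Phi_\theta^*\Lambda=\exp(\sqrt{-1}\sum_k\theta_k g_k)\Lambda$, and $2\pi$-periodicity of the honest torus action forces $g_k$ to be a constant in $\ZZ$; comparing lowest-degree terms (where $\Omega$ and $f$ carry equal torus weights, since both produce the constant term of $h$) shows this constant is $0$, so $\cL_{Z_k}\Lambda=0$.

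The step I expect to be the main obstacle is the assertion, inside the proportionality lemma, that the semisimple part $\cL_{X^s}$ preserves the $\cL_X$-invariant line $\mathcal W=\CC[[x]]T$, i.e. that an $X$-semi-invariant is automatically an $X^s$-semi-invariant. The difficulty is that semi-invariance $\cL_X T=hT$ is not an eigenvector relation in the vector-space sense, because the conformal factor $h$ is a non-constant function, so multiplication by $h$ fails to commute with the semisimple and nilpotent parts and the naive linear-algebra decomposition does not apply directly. The truncation-plus-functoriality route above is what makes this rigorous, and the incommensurability of $\rho$ (via the density of the winding line) is the precise input that converts the single semisimple relation into the $\tau$ separate relations $\cL_{Z_k}T=a_kT$ for the individual torus generators.
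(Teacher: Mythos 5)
Your proposal reaches the theorem by a route that is genuinely different from the paper's, and its overall architecture is sound, but one step is wrong as stated and must be repaired before the argument is complete. For comparison: the paper proves the central fact --- that an $X$-semi-invariant formal tensor $\Omega$ satisfies $\cL_{X^s}\Omega=\lambda^{(0)}\Omega$ --- by an explicit Walcher-style induction on homogeneous degree (equations \eqref{eq2.8}--\eqref{eq:2.11}, together with Lemma \ref{lemma:Walcher} for the denominator $f$), and only afterwards uses incommensurability to split this single relation into the $\tau$ relations $\cL_{Z_k}\Omega=c_k\Omega$ with constant $c_k$. You instead obtain the same fact by pure linear algebra: $\mathcal{W}=\CC[[x]]\cdot T$ is $\cL_X$-invariant, and on each finite-dimensional truncation the semisimple part of the induced operator, which is induced by $\cL_{X^s}$, preserves invariant subspaces. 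This works and is arguably cleaner, but you should record the two facts it silently uses: that $\cL_{X-X^s}$ is nilpotent on each truncation, so that $\cL_{X^s}$ really is the semisimple part there (true, but the summands $\cL_{X^n}$ and $\cL_{X^{(j)}}$, $j\geq 2$, need not commute with one another, so one needs the filtration argument --- the degree-preserving part is nilpotent and the rest strictly raises degree); and that $\CC[[x]]\cdot T$ is closed in the formal topology (true, being a finitely generated submodule of a free finitely generated $\CC[[x]]$-module). Your deduction of (i) from (ii) --- each $g_k$ is a first integral of $X$, hence of weight zero, hence an integer constant by $2\pi$-periodicity of the honest torus action, hence $0$ by matching the lowest-degree weights of $\Omega$ and $f$ --- is also a correct alternative to the paper's direct verification that the weights of $\Omega$ and of $f$ are equal constants.

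The flawed step is the passage from $\cL_{X^s}$-invariance of $\mathcal{W}$ to $\cL_{Z_k}$-invariance: you assert that, by incommensurability, the flow of $X^s=\sum_k\rho_kZ_k$ is a dense winding line in $\mathbb{T}^\tau$. This is false in general, because that flow does not even lie in the compact torus: the torus is generated by $\sqrt{-1}Z_1,\dots,\sqrt{-1}Z_\tau$, whereas the $\rho_k$ are arbitrary complex numbers. Already for $X^s=x\,\partial/\partial x$ (so $\tau=1$, $Z_1=x\,\partial/\partial x$, $\rho_1=1$) the flow is the dilation group $x\mapsto e^t x$, which meets the circle action $x\mapsto e^{\sqrt{-1}\theta}x$ only at the identity; for $\rho=(1,\sqrt{2})$ it is a non-compact subgroup of the diagonal group, nowhere near dense in $\mathbb{T}^2$. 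Two standard repairs, either of which completes your proof: (a) replace metric density in $\mathbb{T}^\tau$ by \emph{Zariski} density in $(\CC^*)^\tau$ --- the Zariski closure of $\{(e^{t\rho_1},\dots,e^{t\rho_\tau})\}$ is all of $(\CC^*)^\tau$ precisely because a character $\chi_m$ annihilating it would give an integer relation $\sum_k m_k\rho_k=0$, and ``preserving the finite-dimensional subspace $\pi_N(\mathcal{W})$'' is a Zariski-closed condition on the group element; or (b) drop the group-theoretic language altogether, as the paper does: $\cL_{X^s}$ acts on the $\ZZ^\tau$-weight-$w$ monomials by the scalar $\langle\rho,w\rangle$, and incommensurability means distinct integer weights give distinct eigenvalues, so every $\cL_{X^s}$-eigenspace is a single weight space; hence any $\cL_{X^s}$-invariant subspace of a truncation is a sum of weight spaces and is automatically invariant under each $\cL_{Z_k}$. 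The same correction applies to your closing sentence, which again attributes the upgrade to ``density of the winding line''.
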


In order to prove the above theorem, we recall the following result due to
Walcher (Lemma 2.2 in \cite{Walcher-Poincare2000}, see also Lemma 2.1 
of \cite{Zung-NFIII2018}).

\begin{lemma} \label{lemma:Walcher}
Let $X$ be a formal vector field in  Poincaré-Dulac normal form. Assume 
that $F=\sum_{k}F^{(k)}$ is a formal semi-invariant invariant of $X$, i.e., 
$X(F)=\lambda F$ for some formal function $\lambda$.  Then there exists an 
invertible formal function $\beta$ (i.e., $\beta(0) \neq 0$)
such that $\tilde F=\beta F$ is a semi-invariant of both $X$
and of its semisimple part $X^s$. Moreover, the formal
function $\tilde \lambda$, satisfying $X(\tilde F)=\tilde\lambda \tilde F$, is 
a first integral of $X^s$ and we have $X^s(\tilde F)=\lambda(0) \tilde F$. 
In addition,  $Z_k(\tilde F)=c_k \tilde F$ for every $k=1,\hdots,\tau$, where 
$c_k$ are some complex numbers and $Z_1,\hdots,Z_\tau$ are the generators given 
in formulas \eqref{eq:Diagonal.VFa} and \eqref{eq:Diagonal.VF} of the 
associated torus action of $X$.
\end{lemma}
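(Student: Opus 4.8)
The plan is to reduce every assertion to the single statement

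$(\ast)$ \emph{there is an invertible formal function $\beta$ such that $\tilde F=\beta F$ is homogeneous for $X^s$ of weight $\lambda(0)$, i.e. $X^s(\tilde F)=\lambda(0)\tilde F$,}

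and then to check that $(\ast)$ forces all the stated conclusions. Write $\mu_0:=\lambda(0)$. Since $\tilde F=\beta F$ and $X(F)=\lambda F$, one has $X(\tilde F)=\tilde\lambda\,\tilde F$ with $\tilde\lambda=\lambda+X(\beta)/\beta$ (a genuine formal function, as $\beta$ is invertible), so $\tilde F$ is automatically a semi-invariant of $X$. Granting $X^s(\tilde F)=\mu_0\tilde F$ and applying $X^s$ to $X(\tilde F)=\tilde\lambda\,\tilde F$ while using $[X^s,X]=0$ gives $X^s(\tilde\lambda)\tilde F=0$, hence $X^s(\tilde\lambda)=0$: the new multiplier is a first integral of $X^s$. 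Finally, writing $X^s=\sum_i\rho_i Z_i$ as in \eqref{eq:Diagonal.VFa}, the $X^s$-weight of a monomial $x^\alpha$ is $\langle\gamma,\alpha\rangle=\sum_i\rho_i\,w_i(\alpha)$ with $w_i(\alpha)\in\ZZ$ the $Z_i$-weight; since $\rho_1,\dots,\rho_\tau$ are incommensurable, an equality of $X^s$-weights forces equality of all integer $Z_i$-weights, so a single $X^s$-weight space is simultaneously a joint weight space for $Z_1,\dots,Z_\tau$, and $X^s(\tilde F)=\mu_0\tilde F$ yields $Z_k(\tilde F)=c_k\tilde F$ for suitable $c_k\in\CC$.

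Second, I would pin down the leading term and isolate the crux. Let $m$ be the lowest degree appearing in $F$. Comparing lowest-degree parts in $X(F)=\lambda F$ and using that only $X^{(1)}=X^s+X^n$ preserves degree gives $(X^s+X^n)F^{(m)}=\lambda(0)F^{(m)}$. As $X^s$ and $X^n$ commute and $X^n$ is nilpotent, an eigenvector of their sum is a joint eigenvector on which the nilpotent part acts by $0$; hence $X^s F^{(m)}=\lambda(0)F^{(m)}$, so the weight of the leading term is exactly $\mu_0$. The crux is then the assertion

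$(\dagger)$ \emph{$F$ is a semi-invariant of $X^s$ as well, i.e. $X^s F\in(F)$.}

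I would prove $(\dagger)$ by transferring the $X$-semi-invariance to the semisimple part through the intrinsic Jordan decomposition $\cL_X=\cL_{X^s}+\cL_{X^N}$ (valid because $X$ is in normal form, where $\cL_{X^s}$ is semisimple and $\cL_{X^N}$ locally nilpotent), exploiting its compatibility with the $\mathfrak m$-adic filtration. Since $X(F)=\lambda F$, the principal ideal $(F)$ is $\cL_X$-stable. On each finite-dimensional quotient $R/\mathfrak m^k$, where $R=\CC[[x_1,\dots,x_n]]$, the induced operator $A_k$ has a Jordan decomposition whose semisimple part is, on one hand, a polynomial $p_k(A_k)$ in $A_k$ and, on the other hand (by uniqueness of the finite-dimensional Jordan decomposition, $\mathfrak m^k$ being $X^s$-invariant), the reduction of $\cL_{X^s}$. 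Being a polynomial in $A_k$, this semisimple part preserves the image of $(F)$; letting $k\to\infty$ and using that $(F)$ is closed in the $\mathfrak m$-adic topology gives $X^s F\in(F)$, which is $(\dagger)$.

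Finally, $(\dagger)$ reduces $(\ast)$ to the elementary semisimple situation. Writing $X^s F=\lambda^s F$ and factoring $F=u\prod_j P_j^{n_j}$ into irreducibles in the UFD $R$, the standard Darboux-factor argument shows each $P_j$ is itself an $X^s$-semi-invariant, and a semi-invariant of a diagonal derivation is a unit times a weight-homogeneous element with constant multiplier; hence each $P_j$, and so $\prod_j P_j^{n_j}$, is weight-homogeneous, necessarily of weight $\mu_0$ by the leading-term computation. Setting $\beta:=u^{-1}$ gives $\tilde F=\prod_j P_j^{n_j}$ with $X^s(\tilde F)=\mu_0\tilde F$ and $\beta(0)\neq0$, establishing $(\ast)$ and the lemma. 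The main obstacle is precisely $(\dagger)$: everything else is formal bookkeeping or the classical structure of semi-invariants of a diagonal vector field, whereas the one genuinely nontrivial input is the compatibility of the intrinsic operator Jordan decomposition with the filtration by powers of $\mathfrak m$, which propagates the finite-dimensional fact ``the semisimple part is a polynomial in the operator'' to the formal completion.
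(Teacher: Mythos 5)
Your route is genuinely different from the paper's. The paper (following Walcher, and reproduced in Step~2 of the proof of Theorem \ref{thm:ConservationProperty}(ii)) constructs $\beta=\prod_{k\geq 1}(1+\beta_k)$ degree by degree, choosing each homogeneous $\beta_k$ via the semisimplicity of $\cL_{X^s}$ so that the corrected multiplier becomes a first integral of $X^s$, and then runs a second induction to get $X^s(\tilde F)=\lambda(0)\tilde F$; the $Z_k$-statement comes from incommensurability exactly as in your first paragraph. Your steps toward $(\ast)$ and the leading-term computation are correct, and your proof of $(\dagger)$ is a nice piece of argument the paper does not contain: since $X$ is in normal form, $\cL_{X^s}$ and $\cL_{X-X^s}$ commute and both preserve $\mathfrak m^k$, the first reducing to a diagonal operator and the second to a nilpotent one on $R/\mathfrak m^k$ (its linear part is nilpotent on each homogeneous piece and its higher-order terms raise degree --- this nilpotency should be said explicitly, and ``locally nilpotent'' on $R$ is not the right phrase), so by uniqueness of the finite-dimensional Jordan--Chevalley decomposition the semisimple part of the reduction of $\cL_X$ is the reduction of $\cL_{X^s}$, hence a polynomial in the reduction of $\cL_X$, hence preserves the image of the $\cL_X$-stable ideal $(F)$; Krull's intersection theorem then gives $X^s(F)\in(F)$. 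This cleanly reduces the mixed case to the semisimple case, which is something the paper's induction never isolates.

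The gap is in your last step. The assertion that ``a semi-invariant of a diagonal derivation is a unit times a weight-homogeneous element with constant multiplier'' is not a citable triviality in the formal category: it is precisely the semisimple case of Walcher's lemma, i.e., the crux of what you are proving. For \emph{polynomial} semi-invariants it is easy (degree comparison forces the multiplier to be constant and $P$ weight-homogeneous, with no unit), but that argument collapses for power series: $F=x+x^2=x(1+x)$ is a semi-invariant of $x\,\partial/\partial x$ with non-constant multiplier $(1+2x)/(1+x)$, so the unit is unavoidable, and producing it is exactly the nontrivial gauge construction the lemma is about. As written, you have reduced the lemma to a special case of itself and then quoted that special case as known. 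The good news is that the missing piece can be filled with your own technique, and the UFD detour becomes unnecessary: by $(\dagger)$ the closed ideal $(F)$ is $\cL_{X^s}$-stable, so its image in each $R/\mathfrak m^k$ is an invariant subspace of a diagonalizable operator, hence spanned by weight vectors; since reduction mod $\mathfrak m^k$ commutes with taking weight components, every weight component $F_w$ of $F$ lies in $(F)+\mathfrak m^k$ for all $k$, hence in $(F)$ by Krull. Writing $F_{w_0}=q_{w_0}F$ for the weight $w_0$ of the lowest-degree part $F^{(m)}$ (which is weight-homogeneous of weight $\lambda(0)$ by your step two), the series $q_{w_0}$ has order $0$, i.e., is a unit, and $\beta:=q_{w_0}$ does the job. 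Alternatively, one can simply quote the paper's order-by-order construction specialized to diagonal $X$; either way this step needs an actual argument, not a citation.
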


We now turn to the proof of the Conservation Theorem 
\ref{thm:ConservationProperty}.

\begin{proof}
(i) We can write the formal rational tensor field $\Lambda$
 as $\Lambda =  \Omega/f$ where $f$ is a formal
 function and $\Omega$ is a formal tensor field co-prime with
 $f$. As observed  above, $\cL_X(\Omega/f)=0$ implies that
 $f$ and $\Omega$ are  semi-invariants of $X$, i.e.,
 \begin{equation}\label{eq2.7}
     \cL_X\Omega=\lambda\Omega\quad \text{and}\quad X(f)=\lambda f
 \end{equation}
for some formal function $\lambda$. Invoking Walcher's Lemma 
\ref{lemma:Walcher}, we can assume $\lambda$ is a first integral of $X^s$ 
(by multiplying both $\Omega$ and $f$ by a formal function of the type 
$1+\beta$, if necessary), that $X^{s}(f)=\lambda(0)f$, and that $Z_k(f)=c_k f$ 
with $c_k\in\CC$ for $k=1,\ldots,\tau$.

We want to show that $\cL_{X^s}( \Omega/f)=0$  or, equivalently, 
$f\cL_{X^s}\Omega=X^{s}(f)\Omega$. Since $X^{s}(f) = \lambda(0)f$, the 
equation to prove is reduced to $\cL_{X^s}\Omega=\lambda(0)\Omega$. We 
prove it by induction on the degree of the terms in \eqref{eq2.7}.

Write $\lambda=\lambda^{(0)}+\lambda^{(1)}+\cdots$ and 
$\Omega=\Omega^{(0)}+\Omega^{(1)}+\cdots$ in which the upper right index
$(k)$ indicates the homogeneous part of degree $k$ of the corresponding term. 
Obviously, $\cL_{X^{(1)}}\Omega^{(0)}=\lambda(0)\Omega^{(0)}$,
therefore $\cL_{X^{s}}\Omega^{(0)}=\lambda(0)\Omega^{(0)}$. Now suppose 
$\cL_{X^{s}}\Omega^{(k)}=\lambda^{(0)}\Omega^{(k)}$ for all $k<\ell$.
Look at the terms of degree $\ell$ in \eqref{eq2.7}; we have
 \begin{equation}\label{eq2.8}
     \cL_{X^{(1)}}\Omega^{(\ell)}+\sum_{k=2}^{\ell+1}\cL_{X^{(k)}}
     \Omega^{(\ell+1-k)}=\lambda^{(0)}\Omega^{(\ell)}+\sum_{k=1}^{\ell}
     \lambda^{(k)}\Omega^{(\ell-k)}.
 \end{equation}
Let $X^s$ act on both sides of the above equation. Since $[X^s,X^{(k)}]=0$ 
and $X^s(\lambda^{(k)})=0$, we have $\cL_{X^s}\Omega^{(k)}=
\lambda(0)\Omega^{(k)}$ for all $k$ by Lemma \ref{lemma:Walcher}, and so we get
\begin{equation}\label{eq2.9}
\cL_{X^{(1)}}\cL_{X^{s}}\Omega^{(\ell)}+\lambda^{(0)}\sum_{k=2}^{\ell+1}
\cL_{X^{(k)}}\Omega^{(\ell+1-k)}=\lambda^{(0)}\cL_{X^s}
\Omega^{(\ell)}+\lambda(0)\sum_{k=1}^{\ell}\lambda^{(k)} \Omega^{(\ell-k)}.
\end{equation}
Take the difference between the expression \eqref{eq2.9} and 
$\lambda^{(0)}$ times the expression \eqref{eq2.8} to get
 \[
      \cL_{X^{(1)}}\left(\cL_{X^s}\Omega^{(\ell)}-\lambda^{(0)}
     \Omega^{(\ell)}\right)=\lambda^{(0)}(\cL_{X^s}\Omega^{(\ell)}
     -\lambda^{(0)}\Omega^{(\ell)});
 \]
therefore
  \begin{equation}\label{eq:2.11}
     \cL_{X^s}\left(\cL_{X^s}\Omega^{(\ell)}-\lambda^{(0)}
     \Omega^{(\ell)}\right)=\lambda^{(0)}(\cL_{X^s}
     \Omega^{(\ell)}-\lambda^{(0)}\Omega^{(\ell)}).
 \end{equation}
This means that $\cL_{X^s}\Omega^{(\ell)}-\lambda^{(0)}\Omega^{(\ell)}$ is 
an eigenvector with eigenvalue $\lambda^{(0)}$ of the linear operator 
$\cL_{X^s}$ on the space $E$ of homogeneous tensor fields of degree $\ell$.
We now decompose $E$ into a direct sum of eigenspaces  and  write 
$\Omega^{(\ell)}=\sum_j\Omega^{(\ell)}_j$ where $\Omega^{(\ell)}_j$ is 
a vector in the eigenspace $E_{c_j}$ with eigenvalue $c_j$. Restricting
attention to $E_{c_j}$, we can easily conclude $c_j=\lambda^{(0)}$ by 
\eqref{eq:2.11}. Thus $c_j=\lambda^{(0)}$ for all $j$,
and $\Omega^{(\ell)}$ itself lies in the eigenspace $E_{\lambda^{(0)}}$,
that is, $\cL_{X^s}\Omega^{(\ell)}=\lambda^{(0)}\Omega^{(\ell)}$.
By induction, we conclude that $\cL_{X^s}\Omega=\lambda^{(0)}\Omega$.

Each monomial tensor field $\Theta$ is an eigenvector of the
Lie derivative operator of the diagonal
vector field $X^s = \sum_{k=1}^\tau \rho_k Z_k$, i.e., we have
$\cL_{X^s} \Theta  = \sum_{k=1}^\tau \rho_k c_k(\Theta) \Theta$ for some 
numbers $c_k(\Theta)$ which, a priori, depend on $\Theta$. Since 
$\cL_{X^s}\Omega=\lambda^{(0)}\Omega$, we must have 
$\sum_{k=1}^\tau \rho_k c_k(\Theta) = 
\lambda(0)$ for every monomial term $\Theta$ of
$\Omega$. Since the numbers $\rho_1,\hdots,\rho_\tau$ are incommensurable, 
there is at most one $\tau$-tuple of complex numbers $c_1,\hdots, c_k$ 
satisfying the equation $\sum \rho_k c_k = \lambda(0)$,
which means that $c_k = c_k(\Theta)$ does not depend on $\Theta$  and 
that $\cL_{Z_k}(\Omega) = c_k \Omega$. Similarly, we have
$\cL_{Z_k}(f) = c_k f$  and hence $\cL_{Z_k}(\Omega/f) = 0$.
\smallskip

(ii) The proof of the second part consists of the following 3 steps:

\underline{Step 1.} Write $\Lambda =  \Omega/f$,
where $\Omega$ and $f$ are co-prime. Then both $\Omega$ and $f$ are
semi-invariants of $X$. By Lemma \ref{lemma:Walcher}, we conclude that $f$ is a 
semi-invariant of the associated torus action of $X$, so it is enough to show
that $\Omega$ is also a semi-invariant of the associated torus action of $X$,
i.e., the problem is reduced to the case when $\Lambda = \Omega$
is a formal tensor field.

\underline{Step 2.} Write $\cL_X\Omega = \lambda\Omega$,
where $\lambda$ is a formal function. Putting $\Omega^* = \beta\Omega$,
where $\beta$ is an appropriate invertible formal function,
we can arrange so that  $\cL_{X^s}\Omega^* = \lambda(0) \Omega^*$. The proof 
of this step is the same as the proof of Walcher's Lemma \ref{lemma:Walcher}. 
(See the proof of Lemma 2.1(i) in \cite{Zung-NFIII2018}.) To keep the
paper self-contained, due to the importance of the Conservation Theorem 
\ref{thm:ConservationProperty}, we give below the full proof.

The semi-invariance of $\Omega$ with respect to $X$ is equivalent to
\begin{equation*}
\cL_{X^s}(\Omega^{(r+j)}) + \cL_{X^n}(\Omega^{(r+j)}) + 
\cL_{X^{(2)}}(\Omega^{(r+j-1)}) + \hdots + \cL_{X^{(j+1)}}(\Omega^{(r)}) 
= \lambda^{(0)} \Omega^{(r+j)} +  \hdots + \lambda^{(j)} \Omega^{(r)}
\end{equation*}
for all $j \geq 0$.

Note that $X^s(\lambda^{(0)}) = 0$.
Now assume that $X^s(\lambda^{(j)}) = 0$
for all $j < k$  and let $\tilde{\Omega} = (1 + \beta_k)\Omega$, where
$\beta_k$ is some homogeneous function of degree $k$. Then
\begin{equation*}
\tilde{\Omega}  = \Omega^{(r)} + \cdots + \Omega^{(r+k-1)} + 
(\Omega^{(r+k)} + \Omega^{(r)} \beta_k) + \cdots
\end{equation*}
and
\begin{equation*}
\cL_X(\tilde{\Omega}) = \tilde{\lambda}\tilde{\Omega},
\end{equation*}
with
\begin{equation*}
\tilde{\lambda} = \lambda^{(0)} + \cdots + \lambda^{(k-1)} + 
(\lambda^{(k)} + X^{(1)}(\beta_k)) + \cdots .
\end{equation*}
Due to the semi-simplicity of $X^s$, one can choose $\beta_k$ such that
$X^s(\lambda^{(k)} + X^{(1)}(\beta_k)) = 0$. Thus, the  assertion is proved by
induction on $k$: $\beta$ can be constructed in the form of an infinite
product $\prod_{k=1}^\infty (1 + \beta_k)$, where each $\beta_k$ is homogeneous 
of degree $k$. (Such an infinite product converges in the space of formal  power 
series). From $X^s(\lambda^*) = 0$ one deduces that $\cL_{X^s}(\Omega^*) = 
\lambda^{(0)} \Omega^*$, again by induction and by the semi-simplicity of $X^s$.

\underline{Step 3.} The equation $\cL_{X^s}(\Omega^*) = \lambda^{(0)} \Omega^*$
implies that $\Omega^*$ (and hence $\Omega$) is a semi-invariant of
the vector fields $Z_k$, $k=1,\hdots,\tau$. This step is already done
at the end of the proof of (i).
\end{proof}

\subsection{Equivariant Moser path method}
\label{subsection:PathMethod}

We recall the well-known path method, which
was introduced by Moser in \cite{Moser-Volume1965}
to show  the equivalence of two volume forms, and
which can be generalized to many other situations in order to
show  the equivalence of two tensor fields on a manifold (see, e.g., 
\cite[Section 5.4]{AbMaRa1988} for a general presentation).

 Let two tensor fields
$\mathcal{G}_0$ and  $\mathcal{G}_1$  be given on a smooth manifold  $M$.
We say they are \textit{\textbf{locally  equivalent}} at  $O \in  M$
if there is a diffeomorphism $\varphi$   of one  neighborhood of  $O$ to 
another neighborhood of $O$, such that
$\varphi^\ast \mathcal{G}_1 = \mathcal{G}_0$.
 One way to show that $\mathcal{G}_0$   and
$\mathcal{G}_1$  are locally equivalent is to join them by
a  curve of tensor fields $\mathcal{G}(t)$
satisfying $\mathcal{G}(0) = \mathcal{G}_0$,
$\mathcal{G}(1) = \mathcal{G}_1$ and to seek a curve  of
local diffeomorphisms  $\varphi_t$   such that
$\varphi_0 = Id$   and
\[
\varphi_{t} ^{\ast} \mathcal{G}(t) = \mathcal{G}_0 ,
\quad \forall t \in [0,1].
\]
Then $ \varphi  = \varphi_1 $ is the desired
diffeomorphism.

A way to find the curve of diffeomorphisms
$\varphi_t$   satisfying the  relation above is to  solve
the equation
\begin{equation}
 \label{eqn:Moser_path}
\mathcal{L}_{X_{t}} \mathcal{G}(t) +
\dfrac{d}{dt} \mathcal{G}(t) =  0
\end{equation}
for a smooth time-dependent  vector field $X_t$.  If this is possible, 
let  $\varphi_t  = F_{t,0}$,  where $F_{t,s}$ is the evolution
operator of the  time-dependent vector field  $X_t$.  Then  we  have
\[
\dfrac{d}{dt}\varphi_t^{\ast}\mathcal{G} (t) =
\varphi_t ^{\ast} \left(\mathcal{L}_{X _t}
\mathcal{G}(t) + \dfrac{d}{d
t}\mathcal{G}(t) \right) = 0,
\]
so that $\varphi_t^{\ast}\mathcal{G}(t) = \varphi_0 ^{\ast}
\mathcal{G}(0) = \mathcal{G}_0 $.  If we choose $X_t$ such that  $X_t(O)
= 0$, then $\varphi_t$ exists for a time $t\geq 1$ in an
open neighborhood of $O$  and
$\varphi_t(O) = O$.

One often takes  $\mathcal{G}(t) = (1 - t)\mathcal{G}_0 +  t\mathcal{G}_1$.
Also, in  applications, this method is not always used in exactly
this way since the  algebraic equation for $X_t$  might be
hard to solve. In such situations the spirit of the path method is used 
(as, e.g., in the proof of the Frobenius Theorem given in 
\cite[Section 4.4]{AbMaRa1988}).
\medskip

Now suppose that a compact Lie group $G$ acts smoothly on $M$  preserving
both tensor fields $\mathcal{G}_0$ and $\mathcal{G}_1$, i.e., 
$\rho_g^* \mathcal{G}_i = \mathcal{G}_i$, for all $g \in G$, $i=0,1$, where 
$\rho: G \times M \rightarrow M$ is the $G$-action. Choose the path of 
tensor fields $\mathcal{G}(t)$ to be also invariant under the action $\rho$;
this condition is clearly satisfied if $\mathcal{G}(t) = (1-t)\mathcal{G}_0 
+ t \mathcal{G}_1$. Let $X_t$ be a solution of \eqref{eqn:Moser_path} and 
define the $G$-averaged smooth vector field
\[
Y_t = \int_G \rho_g^*X_t d\mu,
\]
where $\mu$ is the Haar measure on $G$. By invariance of the Haar measure 
relative to group translations, $Y_t$ is $G$-invariant. In addition, since 
$\rho_g^* \mathcal{G}(t) = \mathcal{G}(t)$ for any $g \in G$ and 
$t \in [0,1]$, we have
\begin{align*}
\mathcal{L}_{Y_t}\mathcal{G}(t) +
\dfrac{d}{dt}\mathcal{G}(t) &=
\int_G \left(\mathcal{L}_{\rho_g^*X_t}
\mathcal{G}(t)\right) d\mu + \dfrac{d}{dt}\mathcal{G}(t)\\
&= \int_G \left(\mathcal{L}_{\rho_g^*X_t} \rho_g^*\mathcal{G}(t) + 
\dfrac{d}{dt}\rho_g^*\mathcal{G}(t)\right) d\mu \\
&= \int_G \rho_g^* \left(\mathcal{L}_{X_t}
 \mathcal{G}(t) + \dfrac{d}{dt} \mathcal{G}(t)
\right) d\mu = 0,
\end{align*}
and hence $Y_t$ also solves \eqref{eqn:Moser_path}.  This shows that 
$\mathcal{G}_0$ and $\mathcal{G}_1$ are $G$-equivariantly equivalent.
 We proved the following result.

\begin{lemma}
\label{lem:EquivariantPath}
Suppose that the compact Lie group $G$ acts locally
smoothly (respectively formally or analytically) on $M$,
preserving two tensor fields $\mathcal{G}_0$ and $\mathcal{G}_1$. If 
\eqref{eqn:Moser_path} has a smooth
(respectively formal  or analytic) solution
$X_t$, then the two structures are locally smoothly (respectively formally 
or analytically) $G$-equivariantly equivalent.
\end{lemma}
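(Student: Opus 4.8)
The plan is to run Moser's deformation argument equivariantly, killing the only obstruction to equivariance — the non-invariance of the deforming vector field — by averaging over the compact group $G$. First I would take the linear path $\mathcal{G}(t) = (1-t)\mathcal{G}_0 + t\mathcal{G}_1$, which is $G$-invariant because $\rho_g^*$ acts linearly on tensors and fixes each endpoint, so that $\rho_g^*\mathcal{G}(t) = \mathcal{G}(t)$ for all $g \in G$ and $t \in [0,1]$. By hypothesis the infinitesimal equation \eqref{eqn:Moser_path} has a solution $X_t$ in the relevant category; I would then replace $X_t$ by its $G$-average $Y_t = \int_G \rho_g^* X_t\,d\mu$ with respect to the normalized Haar measure $\mu$. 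Invariance of $\mu$ under translations makes $Y_t$ itself $G$-invariant, and the computation displayed just above the statement — which uses precisely the invariance of the path $\mathcal{G}(t)$ to move $\rho_g^*$ through the Lie-derivative term — shows that $Y_t$ again solves \eqref{eqn:Moser_path}.

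Next I would integrate. Letting $\varphi_t = F_{t,0}$ be the flow of the time-dependent field $Y_t$ with $\varphi_0 = \mathrm{Id}$, the $G$-invariance of $Y_t$ forces its flow to commute with every $\rho_g$, so each $\varphi_t$ is $G$-equivariant; and since $Y_t$ solves \eqref{eqn:Moser_path}, the identity $\frac{d}{dt}\varphi_t^*\mathcal{G}(t) = \varphi_t^*(\mathcal{L}_{Y_t}\mathcal{G}(t) + \frac{d}{dt}\mathcal{G}(t)) = 0$ gives $\varphi_t^*\mathcal{G}(t) = \mathcal{G}_0$ for all $t$. Taking $\varphi = \varphi_1$ then yields the desired equivariant equivalence $\varphi^*\mathcal{G}_1 = \mathcal{G}_0$. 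To guarantee that the flow exists up to $t = 1$ on a fixed neighborhood of $O$ (and fixes $O$), I would use that $Y_t(O) = 0$, which is inherited from $X_t(O) = 0$ by averaging.

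The main obstacle, and the only real content beyond the smooth case treated above, is ensuring that the averaging and the integration stay within the formal and analytic categories, where $\int_G \rho_g^* X_t\,d\mu$ is not an integral of an ordinary vector field. In the formal case I would interpret the average coefficient by coefficient via Bochner's averaging formula \cite{Bochner-Compact1945}: since $G$ acts by formal diffeomorphisms, each Taylor coefficient of $\rho_g^* X_t$ depends analytically on $g \in G$, so its integral over the compact $G$ exists and assembles into a formal vector field $Y_t$ whose formal time-one flow is again a formal diffeomorphism. In the analytic case the same average converges uniformly on a fixed polydisc, because compactness of $G$ together with analyticity of the action supplies uniform bounds on a common domain of analyticity; hence $Y_t$ is analytic and, being defined for $t \in [0,1]$ with $Y_t(O) = 0$, has an analytic flow up to time one. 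In all three categories the fixed-point condition at $O$ is what secures existence of $\varphi_t$ up to $t = 1$, completing the argument.
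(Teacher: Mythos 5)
Your proposal is correct and follows essentially the same route as the paper: take the $G$-invariant linear path $\mathcal{G}(t)=(1-t)\mathcal{G}_0+t\mathcal{G}_1$, average the Moser vector field $X_t$ over $G$ with Haar measure, use naturality of the Lie derivative and invariance of the path to see that the average $Y_t$ still solves \eqref{eqn:Moser_path}, and integrate its (equivariant) flow up to time one. Your added remarks on interpreting the average coefficient-by-coefficient in the formal case and on uniform convergence in the analytic case only make explicit what the paper leaves implicit, so the approaches coincide.
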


\section{Systems with an invariant singular volume form}
\label{section:SingularVolume}

In this section, we work with a local volume form
$$
\Omega = f(x_1,\hdots,x_n) dx_1 \wedge \hdots \wedge dx_n
$$
which is singular in one of the following two senses:
either $f$ is a formal or analytic function which
vanishes at the origin $O = (0,\hdots,0)$, or $f$
blows up at $O$. When $f$ blows up at $O$, it is more convenient to  look at
the dual $n$-vector field (i.e., a contravariant volume form, a.k.a.
Nambu structure of top order) $\Lambda = \Omega^{-1}$ defined by
$\langle \Lambda, \Omega \rangle = 1$, i.e,
$$\Lambda = g(x_1,\hdots,x_n) \dfrac{\partial}{\partial x_1}
\wedge \hdots \wedge \dfrac{\partial}{\partial x_n},$$
where $g = 1/f$ is now a formal or analytic function
which vanishes at $O$.

We assume that $O$ is a \textit{non-degenerate} singular point,
i.e., we have the following two cases:
\begin{itemize}
    \item \underline{Case 1}. $f(O) = 0$ and $df(O) \neq 0$. In this case
$\Omega$ is called a \textit{folded volume form} (because it can be 
obtained as the pull-black of a usual volume form by a fold map, like in 
the case of folded symplectic structures \cite{Melrose-Folded1981}).
This case is investigated in Subsection \ref{subsection:FoldedVolume}.
    \item \underline{Case 2}. $g(O) = 0$ and $dg(O) \neq 0$.
In this case $\Omega$ is called a non-degenerate \textit{log-volume form} 
and its dual $n$-vector field $\Lambda$ is a non-degenerate singular
Nambu structure of top order. This case is investigated in Subsection   
\ref{subsection:LogVolume}.
\end{itemize}

\subsection{The case of a folded volume form}
\label{subsection:FoldedVolume}

The following lemma about the canonical form of a folded volume form
is a well-known folklore result, but we don't know any exact reference for it,
so we present its proof.

\begin{lemma}
\label{lem:NFforOmega}
Consider a local differential form of top order
$$\Omega = f dx_1 \wedge \hdots \wedge dx_n$$ on an $n$-dimensional
manifold $(n \geq 2),$
where $f$ is a formal (resp. local analytic, resp. smooth)
function such that $f(O) = 0$  and $df(O) \neq 0$. Then there exists a
formal (resp. analytic, resp. smooth) coordinate system
$(y_1,\hdots, y_n)$ around $O$
such that
$$\Omega = y_1 dy_1 \wedge \hdots \wedge dy_n$$
\end{lemma}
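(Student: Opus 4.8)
The plan is to reduce the problem to two elementary normalizations. First I would use the function $f$ itself as the first coordinate, which is legitimate precisely because $df(O)\neq 0$; this turns $\Omega$ into $u_1 h\,du_1\wedge\cdots\wedge du_n$ with $h$ a nonvanishing function. Then I would absorb the nonvanishing factor $h$ into a single one of the remaining coordinates by a one-variable integration, leaving exactly the desired canonical form.

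More precisely, since $df(O)\neq 0$, after relabeling we may assume $\partial f/\partial x_1(O)\neq 0$, so by the inverse/implicit function theorem (in its formal, analytic, and smooth versions) the functions $(u_1,u_2,\dots,u_n):=(f,x_2,\dots,x_n)$ form a local coordinate system around $O$. Writing $\Omega$ in these coordinates gives
\[
\Omega = f\,dx_1\wedge\cdots\wedge dx_n = u_1\, h\, du_1\wedge\cdots\wedge du_n,
\]
where $h=\det(\partial x_i/\partial u_j)$ is the Jacobian of the change of coordinates and hence is nonvanishing near $O$ (in particular $h(O)\neq0$), and where I have used $f=u_1$.

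It remains to kill $h$. I would set $y_1:=u_1$, $y_i:=u_i$ for $i\geq 3$, and
\[
y_2:=\int_0^{u_2} h(u_1,t,u_3,\dots,u_n)\,dt.
\]
Since $\partial y_2/\partial u_2 = h$ and the Jacobian matrix of $(y_1,\dots,y_n)$ in the $u$-coordinates is triangular with diagonal entries $1,h,1,\dots,1$, its determinant is $h$, which is nonzero at $O$; hence $(y_1,\dots,y_n)$ is a genuine local coordinate system. Because $dy_1=du_1$ and $dy_i=du_i$ for $i\geq 3$, while $dy_2 = h\,du_2 + (\text{terms in } du_1,du_3,\dots,du_n)$, wedging gives $dy_1\wedge\cdots\wedge dy_n = h\,du_1\wedge\cdots\wedge du_n$. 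Therefore
\[
\Omega = u_1 h\, du_1\wedge\cdots\wedge du_n = u_1\, dy_1\wedge\cdots\wedge dy_n = y_1\, dy_1\wedge\cdots\wedge dy_n,
\]
as required. The integration defining $y_2$ preserves each of the three regularity classes (term-by-term for formal power series, and by standard results for analytic and smooth functions), so the construction works simultaneously in all three categories.

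The construction is essentially obstruction-free, which is why this is a folklore lemma; the only point that genuinely requires the stated hypothesis is $n\geq 2$. For $n\geq 2$ the single extra coordinate $u_2$ gives exactly the room needed to absorb the nonvanishing Jacobian $h$ by the sign-free integration above. For $n=1$ this room is absent: one would have to solve $\int_0^x f\,dt = \tfrac12 y_1^2$, which forces a square root of the nonzero number $f'(0)$ and fails in the real smooth category when $f'(0)<0$. Thus the role of the assumption $n\geq 2$ is precisely to avoid this degenerate one-dimensional obstruction.
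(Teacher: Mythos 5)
Your proof is correct, and it takes a genuinely different route from the paper's. The paper first straightens the singular hypersurface $S=\{f=0\}$ to $\{x_1=0\}$ via the implicit function theorem, writes $f=x_1\phi$, integrates in the $x_1$-direction to produce a primitive $\Omega = dg\wedge dx_2\wedge\cdots\wedge dx_n$ with $g=x_1^2h$, $h(O)\neq 0$, and finally extracts a square root, $y_1=x_1\sqrt{2h}$; in the real category this forces a case analysis (when $h(O)<0$ one must take $y_1=x_1\sqrt{-2h}$ and flip $y_2=-x_2$ to fix the sign). You instead promote $f$ itself to the first coordinate — legitimate since $df(O)\neq0$ — and then absorb the resulting nonvanishing Jacobian factor $h$ into the \emph{second} coordinate by the one-variable integration $y_2=\int_0^{u_2}h\,dt$. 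This is sign-free, so it needs no square root and no case analysis, works verbatim over $\RR$ and $\CC$ in all three regularity classes, and has the pleasant feature that the singular hypersurface is automatically $\{y_1=0\}$ because $y_1=f$. What the paper's route buys in exchange is the explicit primitive presentation $\Omega=d(y_1^2/2)\wedge dy_2\wedge\cdots\wedge dy_n$, which exhibits $\Omega$ as the pull-back of a regular volume form under a fold map and thus motivates the ``folded volume form'' terminology; your route does not display that structure, but as a proof of the stated normal form it is, if anything, cleaner. Your closing remark about $n\geq 2$ correctly identifies where the extra dimension is used (the room to absorb $h$ without a square root), and the one-dimensional obstruction you describe for $f'(0)<0$ is exactly the real phenomenon the paper's sign fix is designed to handle in higher dimensions.
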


\begin{proof}
Denote by $S =\{f =0\}$ the hypersurface of singular points of $\Omega$.
By the implicit function theorem, locally we can write
$S = \{x_1 = h(x_2,\hdots,x_n)\}$ where $h$ is some formal (or analytic,
or smooth) function. Put $\hat{x}_1 = x_1 - h(x_2,\hdots,x_n)$.
Then $$\Omega = f d\hat{x}_1 \wedge dx_2 \wedge \hdots \wedge dx_n$$ and $f = 0$
at $\hat{x}_1 = 0$. Thus, by this change of variables, we may assume that
$S =\{x_1 =0\}$, i.e, $f$ is divisible by $x_1$: $f = x_1 \phi$, where
$\phi$ is a function such that $\phi(O) \neq 0.$
Integrating $f = x_1 \phi$ in the direction of $\dfrac{\partial }{\partial x_1}$,
we obtain a function
$$ g(x_1,\hdots,x_n) = \int_0^{x_1}t \phi(t,x_2,\hdots,x_n) dt$$
satisfying
$$\Omega =  dg \wedge dx_2 \wedge \hdots \wedge dx_n,$$
and such that $g(0,x_2,\hdots,x_n) = 0$, 
$\dfrac{\partial g}{\partial x_1}(0,x_2,\hdots,x_n) = 0$ but 
$\dfrac{\partial^2 g}{\partial^2 x_1}(O) = \phi(O) \neq 0$, which means 
that $g$ can be written as $g = x_1^2 h$ where $h(0) \neq 0.$
Put $y_1 = x_1 \sqrt{2h}$ (so that $g = y_1^2/2$), 
$y_2 = x_2, \hdots, y_n = x_n$, so
we get a coordinate system $(y_1,\hdots,y_n)$ with
$\Omega =  d(y_1^2/2) \wedge dy_2 \wedge \hdots \wedge dy_n =
y_1 dy_1 \wedge \hdots \wedge dy_n$. (In the real case with $h(O) < 0$,
put  $y_1 = x_1 \sqrt{-2h}, y_2 = - x_2, y_3 = x_3, \hdots, y_n = x_n$.)
\end{proof}

Let $X$ be a vector field
which preserves a folded volume form $\Omega$.
If $X(O) \neq 0$ then it is easy to see that $X$ can be rectified
together with $\Omega$, i.e., there is a coordinate system
$(y_1,\hdots, y_n)$ in which
$$\Omega = y_1 dy_1 \wedge \hdots \wedge dy_n\quad  \text{and}\quad
X =\dfrac{\partial}{\partial y_n}.$$
Indeed, consider the local $(n-1)$-dimensional
quotient space of the local flow generated by $X$.
The local $(n-1)$-form $X \intprod \Omega$ is
the pull-back of a local folded volume form on this local
quotient space so we can write
$X \intprod \Omega =  y_1 dy_1 \wedge \hdots \wedge dy_{n-1}$
by the previous Lemma \ref{lem:NFforOmega}. Now take any function $y_n$ 
such that $X(y_n) = 1$ and $y_n(O) = 0$ and we are done.

Consider now the case when $X(O) = 0$. We have the following
theorem, which is a generalization of the results about normalization
of isochore vector fields (see \cite[Section 4]{Zung-Poincare2002})
from the case of a regular volume form to the case of a folded volume form.

\begin{theorem}
\label{thm:NFFoldedVolume}
Let $X$ be a formal vector field which vanishes at a point $O$ and
which preserves a formal folded volume form $\Omega$. Then
$X$ can be formally normalized together with $\Omega$: $\Omega$
has canonical form and $X$ is in normal form in the coordinate system 
$(x_1,\hdots,x_n)$ and the
semisimple part of $X$ is diagonal in this system (over $\mathbb{C}$):
$$
\Omega = x_1 dx_1 \wedge \hdots \wedge dx_n,\quad X^S = 
\sum \gamma_i x_i \dfrac{\partial}{\partial x_i}.
$$
The eigenvalues of $X$ satisfy the following resonance relation:
\begin{equation}
\label{eqn:ResonanceFoldedVolume}
 2\gamma_1 + \gamma_2 + \hdots + \gamma_n = 0.
\end{equation}
Moreover, if $X$ and $\Omega$ are analytic and $X$ is analytically
integrable or Darboux integrable, then there exists such a normalization
which is locally analytic.
\end{theorem}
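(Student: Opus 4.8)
The plan is to reduce everything to the Conservation Theorem together with an equivariant version of Lemma~\ref{lem:NFforOmega}. First I would normalize $X$ alone: by the classical theorem of Poincaré we may assume $X$ is already in Poincaré--Dulac normal form, so that its semisimple part $X^S=X^s=\sum_{k=1}^\tau\rho_k Z_k$ is linear and diagonal and the associated torus action $\rho$, generated by the diagonal fields $Z_k=\sum_j a_{kj}x_j\,\partial/\partial x_j$, is linear. Since $\cL_X\Omega=0$ and $\Omega$ is in particular a formal tensor field, Theorem~\ref{thm:ConservationProperty}(i) yields $\cL_{Z_k}\Omega=0$ for every $k$. Thus $\Omega$ is invariant under the (linear) torus action, and the entire remaining task is to put $\Omega$ into canonical form by a $\rho$-equivariant change of coordinates, i.e.\ one that keeps all the $Z_k$ diagonal; such a change automatically keeps $X$ in normal form, as normalization of $X$ is equivalent to linearity of $\rho$.

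Writing $\Omega=f\,dx_1\wedge\cdots\wedge dx_n$ and $\sigma_k:=\mathrm{div}\,Z_k=\sum_j a_{kj}$, the identity $\cL_{Z_k}\Omega=(Z_k(f)+\sigma_k f)\,dx_1\wedge\cdots\wedge dx_n=0$ shows that $f$ is a common eigenfunction (semi-invariant) of the $Z_k$, namely $Z_k(f)=-\sigma_k f$. Because $df(O)\neq0$, the linear part of $f$ is a nonzero common eigenvector of the linear action; every coordinate occurring in it carries the single weight $-\sigma=(-\sigma_1,\dots,-\sigma_\tau)$, so after a weight-preserving linear change I may assume $df(O)=dx_1$, i.e.\ $a_{k1}=-\sigma_k$ for all $k$. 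Multiplying by $\rho_k$ and summing over $k$ gives $\gamma_1=\sum_k\rho_k a_{k1}=-\sum_k\rho_k\sigma_k=-(\gamma_1+\cdots+\gamma_n)$, which is precisely the resonance relation $2\gamma_1+\gamma_2+\cdots+\gamma_n=0$ of \eqref{eqn:ResonanceFoldedVolume}.

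With $a_{k1}=-\sigma_k$ the function $u:=f/x_1$ is a genuine invariant, since $Z_k(u)=(-\sigma_k-a_{k1})u=0$, with $u(O)\neq0$. I would now run the proof of Lemma~\ref{lem:NFforOmega} while checking that every auxiliary function produced is a semi-invariant of the expected weight. Set $G:=\int_0^{x_1}u(t,x_2,\dots,x_n)\,t\,dt$, so that $dG\wedge dx_2\wedge\cdots\wedge dx_n=\Omega$; a change of variable in the integral shows $Z_k(G)=2a_{k1}G$. Writing $G=x_1^2 H$ with $H$ invariant and $H(O)=u(O)/2\neq0$, the new coordinate $y_1:=x_1\sqrt{2H}$ satisfies $Z_k(y_1)=a_{k1}y_1$, while $y_j:=x_j$ for $j\geq2$ are unchanged. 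Hence $y_1,\dots,y_n$ are semi-invariants with exactly the weights of $x_1,\dots,x_n$, so each $Z_k=\sum_j a_{kj}y_j\,\partial/\partial y_j$ remains diagonal and $X^S=\sum_j\gamma_j y_j\,\partial/\partial y_j$; at the same time $\Omega=d(y_1^2/2)\wedge dy_2\wedge\cdots\wedge dy_n=y_1\,dy_1\wedge\cdots\wedge dy_n$. This is the desired simultaneous normal form. (Alternatively, after straightening the singular set to $\{x_1=0\}$ one could join $\Omega$ to $y_1\,dy_1\wedge\cdots\wedge dy_n$ through $\rho$-invariant folded volume forms and solve the Moser equation \eqref{eqn:Moser_path} equivariantly using Lemma~\ref{lem:EquivariantPath}.)

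Finally, in the integrable case the theorem of Subsection~\ref{subsection:Integrable} guarantees that the associated torus action is analytic; then the preliminary normalization making $\rho$ linear is analytic, $f$ and $u$ are analytic, and $G$, $H$, $\sqrt{2H}$, and hence the coordinate change, are analytic near $O$, giving the analytic simultaneous normalization. I expect the only real subtlety to be the equivariance bookkeeping of the third paragraph: the whole point is that the classical steps of Lemma~\ref{lem:NFforOmega} can be carried out without breaking the linearity of the torus action, which rests on the two facts that $f/x_1$ is an honest invariant and that integrating or extracting square roots of (semi-)invariants produces (semi-)invariants of predictable weight.
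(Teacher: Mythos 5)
Your strategy --- first normalize $X$ so that the generators $Z_k$ of its associated torus action are linear and diagonal, then invoke Theorem \ref{thm:ConservationProperty} to get $\cL_{Z_k}\Omega=0$, and finally build canonical coordinates for $\Omega$ by an equivariant re-run of Lemma \ref{lem:NFforOmega} --- is viable and genuinely different from the paper's proof, which goes in the opposite order: canonical form for $\Omega$ first via Lemma \ref{lem:NFforOmega}, then linearization of the a priori nonlinear torus action by Bochner averaging combined with the equivariant Moser path method of Lemma \ref{lem:EquivariantPath}, and a final linear diagonalization. Your derivation of the resonance relation \eqref{eqn:ResonanceFoldedVolume} from $a_{k1}=-\sigma_k$ is correct, as is the weight bookkeeping for $G$, $H$ and $\sqrt{2H}$ \emph{once those objects exist}.

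The gap is that they need not exist: you claim that after a weight-preserving \emph{linear} change achieving $df(O)=dx_1$, the function $u=f/x_1$ is a formal invariant with $u(O)\neq 0$. This tacitly assumes that $x_1$ divides $f$, which does not follow from semi-invariance: all monomials of $f$ have weight $-\sigma$, but monomials not involving $x_1$ may also have that weight. Explicit counterexample: $n=3$, $X=Z_1=2x_1\,\partial/\partial x_1+x_2\,\partial/\partial x_2-5x_3\,\partial/\partial x_3$ (so $\sigma_1=-2=-a_{11}$) and $\Omega=(x_1+x_2^2)\,dx_1\wedge dx_2\wedge dx_3$. Here $\cL_X\Omega=0$, $X$ is in normal form, $df(O)=dx_1$, but $f/x_1=1+x_2^2/x_1$ is not a formal function; accordingly $G=\int_0^{x_1}f(t,x_2,x_3)\,dt=x_1^2/2+x_1x_2^2$ is not divisible by $x_1^2$, so your $H$ and $y_1=x_1\sqrt{2H}$ do not exist, and no linear change can repair this since $x_1$ is the only coordinate of weight $2$. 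What is missing is the equivariant analogue of the first, nonlinear, step of Lemma \ref{lem:NFforOmega}: straightening the singular hypersurface $S=\{f=0\}$. Fortunately this is available inside your framework: since $Z_k(f)=-\sigma_k f=a_{k1}f$, the change $\hat x_1=f$, $\hat x_j=x_j$ $(j\geq 2)$ conjugates every $Z_k$ to the \emph{same} diagonal vector field (hence keeps $\rho$ linear and $X$ in normal form), and in the new coordinates $\Omega=\hat x_1\, w\, d\hat x_1\wedge\cdots\wedge d\hat x_n$ with $w=1/(\partial f/\partial x_1)$ an invariant unit. Inserting this step before defining $u$ (it is also the step silently needed by your parenthetical Moser-path alternative), the rest of your argument, including the analytic case, goes through as written; in the example above, $\hat x_1=x_1+x_2^2$ already puts $\Omega$ in canonical form.
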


\begin{proof}
By Lemma \ref{lem:NFforOmega} we may write
$\Omega = y_1 dy_1 \wedge \hdots \wedge dy_n$ in some coordinate system.
Denote by $\rho$ the associated torus $\mathbb{T}^\tau$-action of $X$ at $O$,
where $\tau$ is the toric degree of $X$ at $O$.
 Theorem \ref{thm:ConservationProperty} ensures that $\rho$ preserves $\Omega$.
This implies that the singular set
$S = \{y_1 = 0\}$ of $\Omega$ is  a hyperplane which is preserved by $\rho$.
Since $\Omega$ is homogeneous in the coordinate system $(y_1,\hdots,y_n)$,
the linear part of the action $\rho$ in this coordinate system, which is
a linear torus action denoted by $\rho_1$, also preserves
$\Omega$. Notice that, since $S = \{y_1 = 0\}$ is a hyperplane preserved
by $\rho$, it is also preserved by $\rho_1$.

By Bochner's averaging formula \cite{Bochner-Compact1945},
we find a formal diffeomorphism $\Phi$, whose linear part is the identity,
and which intertwines $\rho$ with
$\rho_1$, i.e.,
$$\Phi \circ \rho_1(s,.)  = \rho (s, .) \circ \Phi, \quad \forall  
s \in \mathbb{T}^\tau.$$
Let
$$
\Omega_1 = \Phi^* \Omega.
$$
Then $\rho_1$ also preserves $\Omega_1$: $\rho_1 (s,.)^* \Omega_1 =
\rho_1 (s,.)^* \Phi^* \Omega = (\Phi \circ \rho_1 (s,.))^* \Omega
= (\rho (s, .) \circ \Phi)^* \Omega  =  \Phi^* \rho(s,.)^* \Omega =
\Phi^*  \Omega = \Omega_1$ for any $s \in \mathbb{T}^\tau.$ Thus $\rho_1$
preserves both $\Omega$ and $\Omega_1$.

Since $S=\{y_1=0\}$ is preserved by $\rho$, it is also preserved by $\Phi$,
which implies that it is also the singular set of $\Omega_1$, i.e.,
$\Omega_1 = \Phi^* \Omega$
is also divisible by $y_1$. Moreover, the linear part of $\Phi$ is the identity.
So we can write
$$ \Omega_1 - \Omega = y_1 \Theta, $$
where $\Theta$ is a  differential $n$-form which vanishes at $O$.

The next step is to use the equivariant Moser  path method to move $\Omega_1$
to $\Omega$ by a formal diffeomorphism without changing the torus action.
According to Lemma \ref{lem:EquivariantPath}, it is enough to show that
the equation
$$ d (Y_t \intprod (\Omega + t y_1 \Theta)) = y_1 \Theta$$
has a solution $Y_t$ (for $t \in [0,1]$).

We can write $y_1\Theta = d (y_1 \Pi)$
for some $(n-1)$-form $\Pi$ with $\Pi(O)=0$ 
and solve the following equation instead:
$$ 
Y_t \intprod (\Omega + t y_1 \Theta) = y_1 \Pi 
$$
or, equivalently,
$$ 
Y_t \intprod (dy_1 \wedge \hdots \wedge dy_n + t \Theta)) = \Pi.
$$
The equation above clearly admits a unique solution $Y_t$ because
$dy_1 \wedge \hdots \wedge dy_n$ is regular and $\Theta(O)=0$ and, moreover,
$Y_t(O)=0$ because $\Pi(O) = 0$.

Thus we have shown the existence of a simultaneous normalization of $X$ and
$\Omega$, i.e., we found a coordinate system $(y_1,\hdots,y_n)$ in which
$X^S = X^s$ (the semisimple part of $X$ coincides with the semisimple part of
its linear part) and $\Omega = y_1 dy_1 \wedge dy_2 \wedge \hdots \wedge dy_n$.
A priori, $X^s$ is not diagonal in the coordinates $(y_1,\hdots,y_n)$, so we have
a bit more work to do to diagonalize it without destroying the form
of $\Omega.$

Recall that the $(n-1)$-dimensional singular set $S = \{y_1 =0\}$ of $\Omega$ 
is invariant under the action $\rho$, so $\rho$ also preserves a line $L$ 
transversal to $S$. This line can be parametrized by $y_1$:
$$ 
L = \{(y_1, a_2 y_1, \hdots , a_n y_1)\},
$$
where $a_2, \hdots, a_n \in \mathbb{C}$ are constants.
By the linear transformation
$$(z_1, z_2, \hdots, z_n) = (y_1, y_2 - a_2y_1, \hdots,
y_n - a_ny_1)$$
we get a new coordinate system in which $\rho$ still linear,
$\Omega$ still has the same form, and both $S = \{z_1 = 0\}$ and
$L =\{z_2 = \hdots = z_n = 0\}$ are invariant with respect to $\rho$.
This means that $\rho = \rho_S \oplus \rho_L$ where $\rho_S$ is a linear
$\mathbb{T}^\tau$-action on $S$ and $\rho_L$ is a linear
$\mathbb{T}^\tau$-action on $L$. Since $\rho$ is determined by $X^S$,
we have a corresponding decomposition $X^S = X^S_S \oplus X^S_L$ for $X^S$.
Since $\Omega$ also decomposes as $\Omega = \Omega_S + \Omega_L$
with $\Omega_S = z_1 d z_1$ and $\Omega_L = dz_2 \wedge \hdots \wedge dz_n$,
the fact that $X^S$ preserves $\Omega$ implies that $X^S_S$ preserves
$\Omega_S$ and $X^S_L$  preserves $\Omega_L$. From this it is easy to see
that we can diagonalize $X^S_S$ on $S$ without destroying the form of $S$,
i.e., $\Omega_S = dx_2 \wedge \hdots \wedge dx_n$ and
$X^S_S = \sum_{i =2}^n \gamma_i x_i \dfrac{\partial}{\partial x_i}.$
Since $L$ is only 1-dimensional, $X^S_L$ is already diagonal, so we can put
$x_1 = z_1$. Then in the coordinate system $(x_1,\hdots,x_n)$ we have that
$X$ is diagonal and $\Omega$ still has the required canonical form.

This normalization is, a priori, only formal. However, if  
$\Omega$ is analytic and $X$ is analytically or Darboux integrable,
the  associated torus action $\rho$ is analytic
and all the steps above can be done analytically, so we have a local analytic
normalization. The resonance equation
\eqref{eqn:ResonanceFoldedVolume} is the same as the equation
$\mathcal{L}_{X^S} \Omega = 0$ in diagonalized normalized coordinates.
\end{proof}

\subsection{Systems preserving a singular Nambu structure of top order}
\label{subsection:LogVolume}

We start by recalling the following result, which is a simple particular case
of the linearization result of Nambu structures (see, e.g., 
\cite{DufourZung-Book}):

Consider a formal (resp. local analytic, resp. smooth) $n$-vector field
$$\Lambda = g \dfrac{\partial}{\partial x_1} \wedge \hdots \wedge
\dfrac{\partial}{\partial x_n}$$
in some coordinate system $(x_1,\hdots, x_n)$ around a point $O$
on an $n$-dimensional manifold. Such a multi-vector field of top order
is also called a \textit{\textbf{Nambu structure of top order}}.
We   say that $O$ is a \textit{\textbf{non-degenerate singular point}}
of $\Lambda$
if $g(O) = 0$ and $dg(0) \neq 0$. Clearly, this definition does not
depend on the choice of the coordinate system at $O$.

\begin{lemma}
\label{lem:LogVolume}
Let $\Lambda$ be a formal (resp. analytic, resp. smooth)  Nambu structure
of top order in dimension $n \geq 2$ with a non-degenerate singular point $O$.
 Then there exists a formal (resp. analytic, resp. smooth) coordinate system 
 $(x_1,\hdots,x_n)$ around $O$ in which we have
\begin{equation}
\label{eqn:LogVolumeCanonical}
\Lambda = x_1 \dfrac{\partial}{\partial x_1} \wedge
\dfrac{\partial}{\partial x_2} \wedge\hdots \wedge
\dfrac{\partial}{\partial x_n}.
\end{equation}
\end{lemma}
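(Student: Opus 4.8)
The plan is to follow the opening of the proof of Lemma \ref{lem:NFforOmega} to straighten the singular hypersurface, and then to absorb the remaining nonvanishing factor into the \emph{transverse} coordinates rather than into $x_1$. First I would normalize $S=\{g=0\}$. Since $dg(O)\neq 0$, after permuting the coordinates we may assume $\partial g/\partial x_1(O)\neq 0$, and the implicit function theorem (formal, analytic, or smooth, according to the category) gives $S=\{x_1=h(x_2,\hdots,x_n)\}$. The substitution $\hat x_1=x_1-h(x_2,\hdots,x_n)$, $\hat x_j=x_j$ for $j\geq 2$, has Jacobian determinant identically $1$, so it leaves the frame $\partial/\partial x_1\wedge\hdots\wedge\partial/\partial x_n$ unchanged; relabelling, we may thus assume $S=\{x_1=0\}$, that is $g=x_1\phi$ with $\phi(O)\neq 0$, so that $\Lambda=x_1\phi\,\partial/\partial x_1\wedge\hdots\wedge\partial/\partial x_n$.

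The key mechanism is the contravariant transformation law: under a change of coordinates $y=y(x)$ one has $\partial/\partial x_1\wedge\hdots\wedge\partial/\partial x_n=J\,\partial/\partial y_1\wedge\hdots\wedge\partial/\partial y_n$ with $J=\det(\partial y/\partial x)$. Hence to reach the canonical form I must produce coordinates with $y_1=x_1$ (to preserve $S$) and $gJ=y_1$, i.e.\ $\phi J=1$. The naive attempt of rescaling $x_1$ alone leads to a first-order ODE with the singular coefficient $1/(x_1\phi)$, whose solution carries the nonintegral power $x_1^{1/\phi(0,\cdot)}$ and therefore leaves all three function classes; this is the only genuine obstacle, and it is bypassed by absorbing $\phi$ into the transverse variables, where no singularity is present. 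Concretely I would set $y_1=x_1$, $y_j=x_j$ for $j\geq 3$, and
$$y_2=\int_0^{x_2}\dfrac{ds}{\phi(x_1,s,x_3,\hdots,x_n)},$$
which is well defined in each category because $\phi(O)\neq 0$. A cofactor expansion then gives $J=\partial y_2/\partial x_2=1/\phi$, whence $gJ=x_1=y_1$ and $\Lambda=y_1\,\partial/\partial y_1\wedge\hdots\wedge\partial/\partial y_n$, as required.

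Finally I would verify that $(y_1,\hdots,y_n)$ is a genuine local coordinate system fixing $O$: its linear part is the diagonal map with diagonal entries $(1,1/\phi(O),1,\hdots,1)$, hence invertible, and $y(O)=O$. The construction preserves the formal, analytic, and smooth categories, since term-by-term integration of a formal power series, of a convergent series, or of a smooth function stays within the same class; the possible sign arising from the permutation of coordinates is harmless and can be absorbed into $\phi$. Conceptually the transverse step is an instance of the Moser/path normalization of a nonvanishing density recalled in Subsection \ref{subsection:PathMethod}, performed fibrewise over the parameter $x_1$, but here it admits the closed form above so no path argument is needed. I expect the bookkeeping of the Jacobian together with the verification that $\phi(O)\neq 0$ after straightening to be the only points requiring care.
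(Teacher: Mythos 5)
Your proof is correct. Note, however, that the paper does not actually prove Lemma \ref{lem:LogVolume}: it simply recalls it as ``a simple particular case of the linearization result of Nambu structures'' and cites the Dufour--Zung monograph, so your argument is a genuinely different (and more self-contained) route. What you do is adapt the scheme of the paper's proof of Lemma \ref{lem:NFforOmega}: straighten $S=\{g=0\}$ by a shear with unit Jacobian, factor $g=x_1\phi$ with $\phi(O)\neq 0$ by Hadamard division, and then --- this is the key twist --- kill the factor $\phi$ by modifying a \emph{transverse} coordinate, setting $y_2=\int_0^{x_2}\phi(x_1,s,x_3,\hdots,x_n)^{-1}\,ds$ so that the contravariant transformation law $\partial_{x_1}\wedge\hdots\wedge\partial_{x_n}=\det(\partial y/\partial x)\,\partial_{y_1}\wedge\hdots\wedge\partial_{y_n}$ contributes exactly the Jacobian $J=1/\phi$, giving $gJ=y_1$. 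You correctly identify why the naive imitation of Lemma \ref{lem:NFforOmega} (rescaling $x_1$ itself, as is done there via $y_1=x_1\sqrt{2h}$) fails in the contravariant case: one would need to integrate $1/(x_1\phi)$, which produces logarithms and non-integral powers of $x_1$ and leaves all three function classes. Your closed-form transverse integration avoids both the path method and any appeal to the general Nambu linearization theory, works verbatim in the formal, analytic, and smooth categories (in the formal case one reads the straightening as the factorization $g=(x_1-h)\phi$ coming from the formal implicit function theorem), and, unlike the folded case, involves no square root, so the real sign issue genuinely disappears. What the paper's citation buys instead is economy and placement of the lemma inside a general theory; what your argument buys is a two-line explicit normalizing chart and independence from \cite{DufourZung-Book}.
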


Consider now a vector field $X$ which preserves
$\Lambda = x_1 \dfrac{\partial}{\partial x_1} \wedge
\dfrac{\partial}{\partial x_2} \wedge\hdots \wedge
\dfrac{\partial}{\partial x_n}$ near $O$. First, we prove the
following simple result if $X(O) \neq 0$.

\begin{proposition}
\label{prop:LogVolumeXRegular}
If $X(O) \neq 0$ then there is a local coordinate system in which $X$ can 
be rectified together with $\Lambda$:
\begin{equation}
\label{eqn:LogVolumeXRegular}
 \Lambda = x_1 \dfrac{\partial}{\partial x_1} \wedge \hdots \wedge
\dfrac{\partial}{\partial x_n}\quad \text{and}\quad
X =\dfrac{\partial}{\partial x_n}.
\end{equation}
\end{proposition}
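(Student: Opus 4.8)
The plan is to adapt, essentially verbatim, the quotient construction used just above for the folded volume form in the case $X(O)\neq 0$, with Lemma \ref{lem:NFforOmega} replaced by Lemma \ref{lem:LogVolume}. Since here $X(O)\neq 0$, this is a purely classical flow-reduction and needs neither the associated torus action nor the Conservation Theorem \ref{thm:ConservationProperty}. Two preliminary facts set things up. Passing to the dual log-volume form $\Omega=\Lambda^{-1}$, defined by $\langle\Lambda,\Omega\rangle=1$, the hypothesis $\mathcal{L}_X\Lambda=0$ is equivalent to $\mathcal{L}_X\Omega=0$ (differentiate $\langle\Lambda,\Omega\rangle=1$ along $X$), so $X$ preserves $\Omega=\frac{1}{x_1}dx_1\wedge\cdots\wedge dx_n$, whose polar hypersurface is $S=\{x_1=0\}$. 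Moreover, the flow of $X$ preserves $\Lambda$, hence its singular set $S$, so $X$ is tangent to $S$ and $X(O)\in T_OS\setminus\{0\}$.

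Next I would form the local $(n-1)$-dimensional quotient $Q$ of the flow of $X$, with projection $\pi\colon U\to Q$, which exists because $X(O)\neq 0$. The key point is that the $(n-1)$-form $\alpha:=X\intprod\Omega$ is basic: one has $\iota_X\alpha=\iota_X\iota_X\Omega=0$ and $\mathcal{L}_X\alpha=\iota_X\mathcal{L}_X\Omega=0$ (using $[X,X]=0$), so $\alpha=\pi^*\bar\Omega$ for a unique $(n-1)$-form $\bar\Omega$ on $Q$. Away from $S$ the form $\Omega$ is non-degenerate and $X\neq 0$, so $\bar\Omega$ is non-vanishing there, while along $\bar S:=\pi(S)$ it inherits a simple pole from the simple pole of $\Omega$ along $S$. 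Thus $\bar\Omega$ is a non-degenerate log-volume form on $Q$, so its dual $\bar\Lambda:=\bar\Omega^{-1}$ is a non-degenerate Nambu structure of top order with singular hypersurface $\bar S$. By Lemma \ref{lem:LogVolume} there are coordinates $(\bar y_1,\ldots,\bar y_{n-1})$ on $Q$ in which $\bar\Lambda=\bar y_1\,\frac{\partial}{\partial\bar y_1}\wedge\cdots\wedge\frac{\partial}{\partial\bar y_{n-1}}$, equivalently $\bar\Omega=\frac{1}{\bar y_1}d\bar y_1\wedge\cdots\wedge d\bar y_{n-1}$.

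To assemble the coordinates on $U$, I would set $x_i:=\bar y_i\circ\pi$ for $i=1,\ldots,n-1$ (these are first integrals of $X$, so $X(x_i)=0$) and choose $x_n$ with $X(x_n)=1$ and $x_n(O)=0$, possible since $X(O)\neq 0$. Then $(x_1,\ldots,x_n)$ is a coordinate system, $X=\partial/\partial x_n$, and $X\intprod\Omega=\frac{1}{x_1}dx_1\wedge\cdots\wedge dx_{n-1}$. Writing $\Omega=h\,dx_1\wedge\cdots\wedge dx_n$ and contracting with $\partial/\partial x_n$ gives $(-1)^{n-1}h\,dx_1\wedge\cdots\wedge dx_{n-1}=\frac{1}{x_1}dx_1\wedge\cdots\wedge dx_{n-1}$, so $h=(-1)^{n-1}/x_1$; after a single orientation-reversing flip of one coordinate if necessary, this yields $\Omega=\frac{1}{x_1}dx_1\wedge\cdots\wedge dx_n$, i.e. $\Lambda=x_1\,\frac{\partial}{\partial x_1}\wedge\cdots\wedge\frac{\partial}{\partial x_n}$, which is the desired simultaneous normal form. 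All steps are carried out in whichever category (formal, analytic, or smooth) the data are given, since the quotient, the flow-box coordinate $x_n$, and Lemma \ref{lem:LogVolume} all respect the category.

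The step I expect to be the main obstacle is the application of Lemma \ref{lem:LogVolume} on the quotient. One must verify carefully that $\bar\Omega$ really descends to a \emph{non-degenerate} log-volume form, and, more delicately, that the quotient has dimension $n-1\geq 2$ so that the lemma applies. For a one-dimensional quotient (the case $n=2$) the residue of $\bar\Omega$ at the point $\bar S$ is a genuine diffeomorphism invariant that cannot in general be normalized to $1$ while keeping $X=\partial/\partial x_n$; that borderline dimension therefore requires separate, more careful treatment.
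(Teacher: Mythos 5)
Your proof is correct and is essentially the paper's own argument: the paper rectifies $X=\partial/\partial x_n$ in a flow box, notes that invariance of $\Lambda$ forces $\Lambda=\Pi\wedge X$ with $\Pi$ a non-degenerate Nambu structure of top order in the transverse (quotient) variables only, and then applies Lemma \ref{lem:LogVolume} to $\Pi$ — the same flow reduction you carry out on the dual form $\Omega$, so for $n\geq 3$ the two write-ups coincide in substance. Your closing caveat about $n=2$ is not a defect of your proposal but a genuine gap in the paper itself: the paper applies Lemma \ref{lem:LogVolume} (stated only in dimension $\geq 2$) to the $(n-1)$-dimensional quotient without comment, and for $n=2$ the proposition as stated is actually false — e.g.\ $X=c\,\partial/\partial x_2$ with $c\neq 1$ preserves $\Lambda=x_1\,\partial/\partial x_1\wedge\partial/\partial x_2$, yet the closed $1$-form $X\intprod\Lambda^{-1}=-\frac{c}{x_1}\,dx_1$ has residue $-c$ along $\{x_1=0\}$, a diffeomorphism invariant of the pair which equals $-1$ for the normal form \eqref{eqn:LogVolumeXRegular}, so no coordinate change can achieve that normal form when $c\neq 1$.
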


\begin{proof} Since $X(O) \neq 0,$ we can rectify $X$ as
$X = \dfrac{\partial}{\partial x_n}$ in a coordinate system
$(z_1,\hdots, z_{n-1}, x_n)$. The invariance of $\Lambda$ with respect to
$X$ means that it has the form $\Lambda = f(z_1,\hdots, z_{n-1})
\dfrac{\partial}{\partial z_1}\wedge \hdots \wedge
\dfrac{\partial}{\partial z_{n-1}} \wedge X = \Pi \wedge X$,
where $\Pi = f(z_1,\hdots, z_{n-1})
\dfrac{\partial}{\partial z_1}\wedge \hdots \wedge
\dfrac{\partial}{\partial z_{n-1}}$. By Lemma \ref{lem:LogVolume},
we can write $\Pi = x_1 \dfrac{\partial}{\partial x_1} \wedge \hdots \wedge
\dfrac{\partial}{\partial x_{n-1}}$ by a change of coordinates from
$(z_1,\hdots, z_{n-1})$ to $(x_1,\hdots, x_{n-1})$ which does not involve
$x_n$. Then \eqref{eqn:LogVolumeXRegular}
is satisfied in the new coordinate system $(x_1,\hdots, x_{n-1}, x)$.
 \end{proof}

Consider now the case when $X(O) = 0$.

\begin{theorem}
\label{thm:NFLogVolume}
Let $X$ be a formal vector field which vanishes at the origin
$O$ and which preserves a Nambu structure of top order $\Lambda$
with a non-degenerate singularity at $O$. Then
$X$ can be formally normalized together with $\Lambda$: there is
a formal coordinate system $(x_1,\hdots,x_n)$ in which $\Lambda$
has canonical form, $X$ is in normal form, and the semisimple part of $X$ 
is diagonal in this coordinate system
(over $\mathbb{C}$):
\begin{equation*}
\label{eqn:NFLogVolume}
\Lambda = x_1 \dfrac{\partial}{\partial x_1} \wedge \hdots \wedge
\dfrac{\partial}{\partial x_n},\quad X^S = 
\sum \gamma_i x_i \dfrac{\partial}{\partial x_i}.
\end{equation*}
The eigenvalues of $X$ satisfy the following resonance relation:
\begin{equation*}
\label{eqn:ResonanceLogVolume}
\gamma_2 + \hdots + \gamma_n = 0.
\end{equation*}
Moreover, if $X$ and $\Lambda$ are analytic and $X$ is analytically
or Darboux integrable, then there exists such a normalization
which is locally analytic.
\end{theorem}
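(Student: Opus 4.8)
The plan is to mirror, in the dual (contravariant) setting, the argument already carried out for the folded volume form in Theorem~\ref{thm:NFFoldedVolume}. First I would use Lemma~\ref{lem:LogVolume} to fix a coordinate system $(y_1,\dots,y_n)$ in which $\Lambda = y_1\,\dfrac{\partial}{\partial y_1}\wedge\cdots\wedge\dfrac{\partial}{\partial y_n}$. Let $\rho$ be the associated $\mathbb{T}^\tau$-action of $X$ at $O$. Since $\Lambda$ is a (polynomial, hence rational) tensor field preserved by $X$, Theorem~\ref{thm:ConservationProperty}(i) guarantees that $\rho$ preserves $\Lambda$; consequently $\rho$ preserves the singular hyperplane $S=\{y_1=0\}$ of $\Lambda$. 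As $\Lambda$ is homogeneous in these coordinates, its linear part $\rho_1$ preserves $\Lambda$ and $S$ as well. Bochner's averaging \cite{Bochner-Compact1945} then yields a formal diffeomorphism $\Phi$ with identity linear part intertwining $\rho_1$ and $\rho$; setting $\Lambda_1=\Phi^*\Lambda$, the same computation as in the folded case shows that $\rho_1$ preserves both $\Lambda$ and $\Lambda_1$. Because $\Phi$ preserves $S$, the multivector $\Lambda_1$ is again divisible by $y_1$, and since $\Phi$ has trivial linear part we may write $\Lambda_1-\Lambda = y_1\Theta$ with $\Theta(O)=0$.

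The heart of the proof, and the step I expect to be the main obstacle, is the equivariant Moser path method applied to $\Lambda(t)=\Lambda+t\,y_1\Theta$ for $t\in[0,1]$. Here the contravariant nature forces a genuine departure from the folded case: for a top-degree form one has $\cL_{Y_t}\Omega(t)=d(Y_t\intprod\Omega(t))$, and contraction with the nondegenerate leading form is a pointwise isomorphism, so Moser's homological equation becomes algebraic; for a top-degree multivector, however, $\cL_{Y_t}\Lambda(t)$ carries a divergence term and the equation $\cL_{Y_t}\Lambda(t)+\tfrac{d}{dt}\Lambda(t)=0$ is a genuine first-order scalar PDE for $Y_t$. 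I would resolve this by seeking $Y_t$ tangent to $S$ (so that $Y_t(y_1)$ is divisible by $y_1$), which allows the common factor $y_1$ to be cancelled and reduces the problem to a regular cohomological equation for the coefficient function of $\Lambda(t)$ relative to the fixed frame; this equation is solvable formally degree by degree with $Y_t(O)=0$, since $\Theta(O)=0$ and the leading multivector is nondegenerate. (Alternatively one may dualize, passing to the singular volume form $\Lambda(t)^{-1}$ and applying the contraction trick of Theorem~\ref{thm:NFFoldedVolume} in the rational category.) With $Y_t$ in hand, Lemma~\ref{lem:EquivariantPath} produces a $\rho_1$-equivariant formal diffeomorphism carrying $\Lambda_1$ back to $\Lambda$, so that in the resulting coordinates $\rho_1$ is linear and $\Lambda$ has canonical form simultaneously.

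It remains to diagonalize $X^S$ without disturbing $\Lambda$, which proceeds exactly as in the folded case: the invariant hyperplane $S$ together with a $\rho$-invariant transversal line $L$ split the linear action as $\rho=\rho_S\oplus\rho_L$ and hence $X^S=X^S_S\oplus X^S_L$, with $\Lambda$ splitting correspondingly; one diagonalizes $X^S_S$ on $S$ keeping $\dfrac{\partial}{\partial x_2}\wedge\cdots\wedge\dfrac{\partial}{\partial x_n}$ fixed, while $X^S_L$ is automatically diagonal on the line $L$. Finally, when $\Lambda$ and $X$ are analytic and $X$ is analytically or Darboux integrable, the associated torus action is analytic, every step above (the Bochner linearization and the Moser construction) can be carried out analytically, and one obtains a local analytic simultaneous normalization. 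The resonance relation $\gamma_2+\cdots+\gamma_n=0$ is then read off from $\cL_{X^S}\Lambda=0$ in the diagonal normalized coordinates, using $\cL_{X^S}\big(x_1\,\tfrac{\partial}{\partial x_1}\wedge\cdots\wedge\tfrac{\partial}{\partial x_n}\big)=-(\gamma_2+\cdots+\gamma_n)\,\Lambda$.
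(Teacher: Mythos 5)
Your proposal is correct and follows essentially the same route as the paper: Conservation Theorem, Bochner linearization, equivariant Moser path method on the path $\Lambda + t\,x_1\Theta$, then diagonalization of $X^S$ exactly as in the folded-volume case. Even your treatment of the Moser homological equation matches the paper's, which imposes $Y_t(x_1)=0$ and dualizes to the regular volume form $\Omega_t = \left(\dfrac{\partial}{\partial x_1}\wedge\cdots\wedge\dfrac{\partial}{\partial x_n} + t\Pi\right)^{-1}$, reducing everything to the conformal divergence equation $\cL_{Y_t}\Omega_t = -g_t\Omega_t$ — precisely the dualization you mention parenthetically as an alternative to your (equally valid) tangent-to-$S$ cancellation argument.
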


\begin{proof}
The proof is quite similar to the case of a folded volume form.
We start with a coordinate system in which $\Lambda$ is in canonical
form \eqref{eqn:LogVolumeCanonical} but the associated torus action $\rho$
of $X$ is, a priori, nonlinear. Since $\Lambda$ is homogeneous and $\rho$
preserves $\Lambda$, the linear part $\rho_1$ of $\rho$, which is a linear
torus action, also preserves $\Lambda$. By Bochner's averaging formula,
we find a formal diffeomorphism $\Psi$ whose linear part is the identity and 
which intertwines $\rho$ with $\rho_1$. Then $\rho_1$ preserves both $\Lambda$
and $\Lambda_1 = \Psi_* \Lambda$. Similar  to the case of a folded volume 
form, $\Lambda$ and $\Lambda_1$ have the same singular set $S$, which is 
a linear subspace of codimension 1 in some coordinate system in which 
$\Lambda$ is linear and the linear part of $\Lambda_1$ coincides with 
$\Lambda$, i.e., we can write
\begin{align*}
\Lambda &= x_1 \dfrac{\partial}{\partial x_1} \wedge \hdots \wedge
\dfrac{\partial}{\partial x_n}, \\
\Lambda_1 &= x_1 \dfrac{\partial}{\partial x_1} \wedge \hdots \wedge
\dfrac{\partial}{\partial x_n} + x_1 \Pi,
\end{align*}
where $\Pi$ is an $n$-vector field which vanishes at $O$. To apply 
Lemma \ref{lem:EquivariantPath}, we must find a time-dependent vector 
field $Y_t$ such that
$$
\mathcal{L}_{Y_t}\left( x_1 \dfrac{\partial}{\partial x_1} \wedge \hdots \wedge 
\dfrac{\partial}{\partial x_n} + t x_1 \Pi\right)  = x_1 \Pi.
$$
In order to simplify the equation above, we can impose $Y_t(x_1) = 0$
(i.e., $x_1$ is a first integral for $Y_t$)  and define $\Omega_t = 
\left(\dfrac{\partial}{\partial x_1} \wedge \hdots \wedge 
\dfrac{\partial}{\partial x_n} + t \Pi\right)^{-1}$, which is a regular 
volume form since $P(O) = 0$. Put $g_t = \langle \Pi, \Omega_t \rangle$.
Then the above equation, after pairing both sides by
$- \dfrac{\Omega_t \langle \Omega_t, \cdot \rangle}{x_1}$, becomes
$$\mathcal{L}_{Y_t} \Omega_t = - g_t \Omega_t,$$
which can be easily solved. The rest of the proof is similar 
to that of Theorem \ref{thm:NFFoldedVolume}.
\end{proof}

\section{Systems with a singular symplectic structure}
\label{section:SingularSymplectic}

In this section we consider two kinds
of singular symplectic structures, namely the so called \textit{folded 
symplectic structures} (which have vanishing components) and
\textit{log-symplectic structures} (which have poles). We denote by
$ \mathbb{K} $ either $ \mathbb{R} $ or $ \mathbb{C} $.

\subsection{Folded symplectic structures}

Consider a (formal, or local analytic, or smooth)
closed 2-form $\omega$  on $(\KK^{2n},O)$, such that $\omega^n$
is non-trivial on $\KK^{2n}$ but $\omega^n (O) = 0$.
Assume that the corank of $\omega$ at $O$ is $2$, and that
$\omega^n = f dx_1 \wedge \hdots \wedge dx_{2n}$ in some local coordinate 
system, where $f(O) = 0$ but $df(O)\neq 0$,
so that $S =\{f =0\}$ is the hypersurface of singular points
of $\omega$. Assume, moreover, that the pull-back of $\omega$ to $S$ has
constant rank equal to $2(n-1)$. Such a closed 2-form $\omega$ is called
a \textit{\textbf{folded symplectic structure}} (because it can be obtained 
as the pull-back of a usual symplectic form
by a fold map \cite{Melrose-Folded1981}); it is known to admit the following
(formal, or local analytic, or smooth) normal form,
according to a classical result of Martinet \cite{Martinet-Formes1970}:
\[
 \omega=x_1dx_1\wedge dx_2+\sum_{j=1}^{n-1} dy_i\wedge dy_{j+n-1}.
\]

More generally, one may consider a \textit{\textbf{multi-folded
symplectic structure}} $\omega$, which
for some positive number $m \leq n$
admits the following canonical Darboux-like expression in some
coordinate system
$(x_1,\hdots,x_{2m},y_1,\hdots,y_{2(n-m)})$:
\begin{equation}\label{eq:mfolded-symplectic}
 \omega= \sum_{i=1}^m x_idx_i\wedge dx_{m+i} + 
 \sum_{j=1}^{n-m} dy_j\wedge dy_{j+n-m}.
\end{equation}

The multi-folded symplectic structure above is quasi-homogeneous: we have
\[
\mathcal{L}_E \omega = 6 \omega,
\]
where $E$ is the quasi-Euler vector field 
\[
E = 2\sum_{i=1}^{2m} x_i\dfrac{\partial}{\partial x_i}
+ 3\sum_{j=1}^{2(n-m)}  y_j\dfrac{\partial}{\partial y_j}.
\]
In other words, if we declare that each $x_i$ is of
quasi-homogeneous order $2$  and each $y_j$ is of quasi-homogeneous 
order $3$, then $\omega$ becomes quasi-homogeneous of order 6.

Notice that the kernel of $\omega$ at $O$ is the $2m$-dimensional vector 
space
$$
K = {\rm Span}\left\{ \dfrac{\partial}{\partial x_1},\hdots, 
\dfrac{\partial}{\partial x_{2m}}\right\} \subset \KK^{2n}.
$$
The quasi-Euler vector field $E$ is tangent to $K$ and is equal to 2 times 
the usual Euler vector field on $K$. $E$ also projects to 3 times the 
usual Euler vector field on the quotient space $\KK^{2n}/K$.

Let us assume now that there is a local (smooth, analytic, or formal) action 
$\rho$ of a compact Lie group $G$ on $(\KK^{2n},O)$  which fixes the origin $O$
and preserves $\omega$.  A priori, $\rho$ does not preserve the quasi-Euler 
vector field $E$, but it must preserve the kernel space $K$ because it 
preserves $\omega$. Denote by
$$ 
E_\rho = \int_{g \in G} \rho(g)_* E\ d\mu_G
$$
the average of $E$ with respect to $\rho$ (where $d\mu_G$ denotes the
Haar measure on $G$).

The vector field $E_\rho$ is preserved by $\rho$  and we still have 
$\mathcal{L}_{E_\rho} \omega = 6 \omega$. Moreover,
$E_\rho$ is still tangent to $K$ and has its linear part equal to 2 times 
the usual Euler vector field on $K$; the projection of the linear part 
of $E_\rho$ on the quotient space $\KK^{2n}/K$ is still equal to 3 times 
the usual Euler vector field on this quotient space. All of this implies, in 
particular, that the linear part  $E_\rho^{(1)}$ of $E_\rho$
is, a priori, non-diagonal but diagonalizable, with only two distinct 
eigenvalues 2 and 3: to diagonalize
$E_\rho^{(1)}$ we must apply a linear transformation to
$\KK^{2n}$, which is, a priori, not identity, but which is the
identity on $K$ and also projects to the identity map on
$\KK^{2n}/K$. In other words, $E_\rho^{(1)}$ is diagonal
in a coordinate system $(x_i', y_j')$ of the type
$$y_j' = y_j\ \forall j\leq 2(n-m) \quad \text{and}\quad
x_i' = x_i + \sum a_{ij} y_j\ \forall i \leq m$$
for some constants $a_{ij}$.

Notice that such a vector field is in the Poincar\'e domain and 
is non-resonant in the sense of Poincar\'e-Dulac, i.e., 
there is no resonance relation of the type 
$\gamma_i = \gamma_{j_1} + \dots + \gamma_{j_k}$ (with $k \geq 2$) among 
its eigenvalues. It follows from the classical theory of normalization of 
vector fields that $E_\rho$ is locally diagonalizable (smoothly if $E_\rho$ 
and $\rho$ are smooth, analytically if $E_\rho$ and $\rho$ are analytic, 
and formally if $E_\rho$ and $\rho$ are formal), also in an equivariant way. 
(For the smooth case, see \cite{BeKo-Equivariant2002};
the formal and analytic cases are simpler and can be done using the toric 
approach.) In other words, there is another coordinate system
$(u_1,\hdots, u_{2m},v_1,\hdots, v_{2(n-m)})$ on $(\KK^{2n},O)$, such that 
$u_i = x_i' + h.o.t.$ and $v_i = y_i' + h.o.t.$ ($h.o.t.$ means higher order 
terms, i.e., terms of degree at least 2 here), in which the action $\rho$ is 
linear, and the vector field $E_\rho$ has the following diagonal form:
\[
E_\rho = 2 \sum_{i=1}^{2m} u_i\dfrac{\partial}{\partial u_i}
+ 3 \sum_{j=1}^{2(n-m)} v_j\dfrac{\partial}{\partial v_j}.
\]
Since the linear action $\rho$ (in the coordinates
$(u_i,v_j)$) preserves $E_\rho$, it must also preserve the
eigenspaces $U$ and $V$ of $E_\rho$, where $U$ is the space where all the 
coordinates $v_i$ vanish and $V$ is the space where all the coordinates
$u_i$ vanish, and we have the splitting $\KK^{2n} = U \oplus V$.

Remember that $\mathcal{L}_{E_\rho}(\omega) = 6 \omega$. Now, $E_\rho$ is
diagonal, and the monomial 2-forms are eigenvectors of the Lie derivative 
operator $\mathcal{L}_{E_\rho}$. The only monomial 2-forms which have 
eigenvalue equal to 6 are $dv_i \wedge d v_j$ and $u_k du_i \wedge d u_j$, 
so $\omega$ must be a linear combination with constant coefficients of 
these monomial forms. (By standard division techniques, one can show easily 
that this is also true in the smooth case). It follows that $\omega$ admits 
an equivariant splitting 
$$ 
\omega = \omega_U + \omega_V,
$$
where $\omega_V$ is defined on $V$ and is constant, $\omega_U$ is defined 
on $U$ and is linear, and $\rho$ preserves both $\omega_U$ and $\omega_V$ 
(because it acts separately on $U$ and $V$ and cannot mix up these terms).

Recall that, by construction, we have
$$ 
x_i = u_i - \sum_{j=1}^{2(n-m)} a_{ij}v_j + h.o.t.
$$
and
$$ 
y_j = v_j  + h.o.t.
$$
for every $i \leq 2m$ and every $j \leq 2(n-m)$. Putting these
into the original formula \eqref{eq:mfolded-symplectic}
of $\omega$, we get 
\begin{align*}
&\omega = \\& \sum_{i=1}^m ( u_i - \sum_{j=1}^{n-m} a_{ij}v_j + h.o.t.) 
d (u_i - \sum_{j=1}^{n-m} a_{ij}v_j + h.o.t.) \wedge d(u_{i+m} - 
\sum_{j=1}^{n-m} a_{i+m,j}v_j + h.o.t.)\\
&\quad + \sum_{j=1}^{n-m} d(v_j  + h.o.t.)\wedge d (v_{j+n-m}   + h.o.t.).
\end{align*}
In the expression above, the sum of linear $U$-terms
(i.e., linear terms that do not contain
any $v_j$ or $dv_j$ in their expression) is
$\sum_{i=1}^m u_idu_i\wedge u_{m+i}$. Since
$ \omega = \omega_U + \omega_V,$ with
$\omega_U$ being a sum of linear $U$-terms, we automatically
have $\omega_U = \sum_{i=1}^m u_idu_i\wedge u_{m+i}$. By a similar argument,
 we also automatically have
$\omega_V = \sum_{j=1}^{n-m} dv_j \wedge dv_{j+n-m}.$ Thus we have proved the 
following equivariant normalization theorem for multi-folded
symplectic structures.

\begin{theorem}\label{thm:folded-symplectic}
Consider a multi-folded symplectic structure $\omega$ written as
$$ 
\omega= \sum_{i=1}^m x_idx_i\wedge dx_{m+i} + 
\sum_{j=1}^{n-m} dy_j\wedge dy_{j+n-m}
$$
($1 \leq m \leq n$) in some local (analytic, resp. formal, resp. smooth) 
coordinate system $(x_1,\hdots,x_{2m},y_1,\hdots,y_{2(n-m)})$
on $(\KK^{2n},O)$. Assume that there is a local (analytic, resp. formal, 
resp. smooth) action $\rho$ of a compact Lie group $G$ on $(\KK^{2n},O)$ which 
fixes the origin $O$ and preserves $\omega$. Then there exists a local
(analytic, resp. formal, resp. smooth) coordinate system
$(u_1,\hdots,u_{2m},v_1,\hdots,v_{2(n-m)})$
on $(\KK^{2n},O)$, in which the action $\rho$ is linear and
the multi-folded symplectic form $\omega$ is still in canonical form:
$$ 
\omega= \sum_{i=1}^m u_idu_i\wedge du_{m+i} + 
\sum_{j=1}^{n-m} dv_j\wedge dv_{j+n-m}.
$$
\end{theorem}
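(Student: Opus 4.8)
The plan is to leverage the quasi-homogeneity of $\omega$ recorded above, namely $\mathcal{L}_E \omega = 6\omega$ for the quasi-Euler field $E$, after replacing $E$ by a $G$-invariant version. First I would set $E_\rho = \int_{g\in G} \rho(g)_* E\, d\mu_G$; since $\rho$ preserves $\omega$ and the Haar measure is translation-invariant, $E_\rho$ is $\rho$-invariant and continues to satisfy $\mathcal{L}_{E_\rho}\omega = 6\omega$. As $\rho$ fixes the kernel $K = \ker\omega(O)$, the averaged field $E_\rho$ stays tangent to $K$ with the same linear behaviour as $E$: its linear part $E_\rho^{(1)}$ acts as twice the Euler field on $K$ and projects to three times the Euler field on $\KK^{2n}/K$, so it is diagonalizable with the single pair of eigenvalues $\{2,3\}$. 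A preliminary linear shear $x_i' = x_i + \sum_j a_{ij}y_j$, $y_j' = y_j$ (the identity on $K$ and on the quotient) puts $E_\rho^{(1)}$ into diagonal form.

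The key step, and the one I expect to be the main obstacle, is the equivariant linearization of $E_\rho$ itself. The eigenvalues $2,3$ are positive and admit no relation $\gamma_i = \gamma_{j_1}+\dots+\gamma_{j_k}$ with $k\ge 2$, so $E_\rho$ lies in the Poincaré domain and is non-resonant. I would therefore invoke an equivariant Poincaré--Dulac linearization theorem---in the smooth category the result of \cite{BeKo-Equivariant2002}, with the formal and analytic cases handled by the toric approach---to produce coordinates $(u_1,\dots,u_{2m},v_1,\dots,v_{2(n-m)})$, with $u_i = x_i' + \text{h.o.t.}$ and $v_j = y_j + \text{h.o.t.}$, in which $\rho$ is linear and $E_\rho = 2\sum_i u_i \frac{\partial}{\partial u_i} + 3\sum_j v_j \frac{\partial}{\partial v_j}$ exactly. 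The difficulty here is that the vector field must be linearized and the group action made linear \emph{simultaneously}, which is precisely what the equivariant theorem supplies.

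Once this normal form is reached, the rest is algebraic. Since the now-linear action $\rho$ commutes with $E_\rho$, it preserves the two eigenspaces $U = \{v=0\}$ and $V = \{u=0\}$, yielding a $\rho$-invariant splitting $\KK^{2n} = U \oplus V$. The relation $\mathcal{L}_{E_\rho}\omega = 6\omega$ forces $\omega$ to be a constant-coefficient combination of monomial $2$-forms of $E_\rho$-weight exactly $6$; assigning weight $2$ to each $u_i$ and $du_i$ and weight $3$ to each $v_j$ and $dv_j$, the only such forms are $dv_i\wedge dv_j$ (weight $3+3$) and $u_k\,du_i\wedge du_j$ (weight $2+2+2$). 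Hence $\omega$ splits $G$-equivariantly as $\omega = \omega_U + \omega_V$, with $\omega_U$ linear on $U$, $\omega_V$ constant on $V$, and both separately $\rho$-invariant. To finish I would substitute $x_i = u_i - \sum_j a_{ij} v_j + \text{h.o.t.}$ and $y_j = v_j + \text{h.o.t.}$ into the original canonical expression for $\omega$ and collect terms: the linear $U$-terms must reproduce $\omega_U$ and the constant $V$-terms must reproduce $\omega_V$, forcing $\omega_U = \sum_{i=1}^m u_i\,du_i\wedge du_{m+i}$ and $\omega_V = \sum_{j=1}^{n-m} dv_j\wedge dv_{j+n-m}$, which is exactly the claimed canonical form in the new coordinates.
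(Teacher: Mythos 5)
Your proposal is correct and follows essentially the same route as the paper: averaging the quasi-Euler field $E$ over $G$, diagonalizing its linear part by a shear that is the identity on $K$ and on $\KK^{2n}/K$, invoking equivariant Poincar\'e--Dulac linearization (non-resonant eigenvalues $2,3$ in the Poincar\'e domain, via \cite{BeKo-Equivariant2002} in the smooth case and the toric approach otherwise), then using the weight-$6$ eigenvalue condition $\mathcal{L}_{E_\rho}\omega = 6\omega$ to split $\omega = \omega_U + \omega_V$ and identifying both summands by back-substitution. All the key steps, including the simultaneous linearization of $\rho$ and $E_\rho$, match the paper's argument.
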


As an immediate consequence of the theorem above, we get the following result 
about  normalization of a vector field with an underlying (multi-)folded 
symplectic structure.

\begin{theorem}
Let $X$ be a formal vector field on $(\KK^{2n},O)$
which vanishes at $O$ and preserves a (multi-)folded
symplectic structure $\omega$ with a canonical expression
$\omega= \sum_{i=1}^m x_idx_i\wedge dx_{m+i} + 
\sum_{j=1}^{n-m} dy_j\wedge dy_{j+n-m}$  in some coordinate system.
Then the couple $(X,\omega)$ admits a formal simultaneous normalization, 
i.e., there is a formal coordinate transformation which normalizes $X$ 
while keeping the canonical expression of $\omega$. Moreover, if everything 
is locally analytic (real or complex), and $X$ is analytically   
or Darboux integrable, then this normalization can be made locally analytic.
\end{theorem}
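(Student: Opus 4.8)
The plan is to derive this theorem as a direct corollary of the equivariant normalization result for multi-folded symplectic structures (Theorem \ref{thm:folded-symplectic}) together with the toric conservation law (Theorem \ref{thm:ConservationProperty}), in exactly the spirit of the folded-volume and Nambu cases treated above. First I would attach to $X$ its intrinsic associated torus $\mathbb{T}^\tau$-action $\rho$ at $O$, where $\tau$ is the toric degree of $X$, as in Subsection \ref{subsection:TorusActions}. The structural fact to keep in mind throughout is that $\rho$ is canonically determined by $X$ and that \emph{$X$ is in Poincaré--Dulac normal form if and only if $\rho$ is linear}; hence normalizing $X$ is the same problem as linearizing $\rho$.

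Since $X(O)=0$ and $X$ preserves the tensor field $\omega$, the Conservation Theorem \ref{thm:ConservationProperty}(i) guarantees that $\rho$ preserves $\omega$ as well, i.e. $\mathcal{L}_{Z_k}\omega=0$ for each generator $Z_k$ of $\rho$. As ``$\rho$ preserves $\omega$'' is a coordinate-free statement about intrinsic objects, it may be invoked directly even though $\omega$ is canonical in coordinates in which $X$ is not yet normalized: one establishes it in any coordinate system where $X$ has been put in normal form, and it then holds in the given canonical-$\omega$ coordinates. Consequently $\rho$ is a formal action of the compact torus $\mathbb{T}^\tau$ on $(\KK^{2n},O)$ which fixes $O$ and preserves the canonical multi-folded symplectic form $\omega$ --- precisely the hypothesis of Theorem \ref{thm:folded-symplectic} with $G=\mathbb{T}^\tau$. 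Applying that theorem yields a formal coordinate system $(u_1,\hdots,u_{2m},v_1,\hdots,v_{2(n-m)})$ in which $\rho$ is linear and $\omega$ retains its canonical expression; since linearity of $\rho$ is equivalent to $X$ being in normal form, this single coordinate change simultaneously normalizes $X$ and preserves the canonical form of $\omega$, which settles the formal statement. For the analytic assertion, if $X$ is analytically or Darboux integrable then $\rho$ is locally analytic by the theorem recalled in Subsection \ref{subsection:Integrable}, and running Theorem \ref{thm:folded-symplectic} in the analytic category produces an analytic simultaneous normalization.

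I expect no genuine new obstacle beyond what has already been established: the entire analytic difficulty is absorbed into Theorem \ref{thm:folded-symplectic} (the equivariant diagonalization of the averaged quasi-Euler field, which lies in the Poincaré domain and is non-resonant) and into the conservation law. The only points demanding care are the reduction ``normalize $X$ $\Longleftrightarrow$ linearize $\rho$'' and the legitimacy of invoking the conservation law intrinsically, both of which are already available from the preliminaries.
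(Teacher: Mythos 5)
Your proposal is correct and takes essentially the same approach as the paper: the paper's entire proof is the single sentence ``Just apply Theorem \ref{thm:folded-symplectic} to the associated torus action of $X$ at $O$.'' Your write-up merely makes explicit the ingredients that sentence leaves implicit --- the Conservation Theorem \ref{thm:ConservationProperty} to see that the torus action preserves $\omega$ (invoked intrinsically, exactly as the paper does in the folded-volume case), the equivalence between normalizing $X$ and linearizing $\rho$, and the analyticity of $\rho$ in the integrable case.
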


\begin{proof}
Just apply Theorem \ref{thm:folded-symplectic} to the  associated torus 
action of $X$ at $O$.
\end{proof}

\subsection{Log-symplectic structures}

Recall that a meromorphic differential form
$\omega$ on a complex manifold $M$, with poles along a
divisor $D  \subset M$ (without  multiplicities),
is  called \textit{\textbf{logarithmic}} if both forms
$\omega$ and $d\omega$ have poles only along the divisor $D$ and
of order not greater than 1.  Such logarithmic differential forms
have been studied by many people in algebraic geometry, see, e.g.,
Deligne \cite{Deligne1970}, Saito \cite{Saito-Log1980},
Aleksandrov \cite{Aleksandrov-Log2017}. Roughly speaking, the
logarithmic condition means that if $\omega$ contains a local pole
$1/h$, where $h$ is a local holomorphic function
whose zero locus is a component of $D$ (without multiplicities),
then this pole must come together with $dh$, i.e., one can write a local
expression of $\omega$ which contains only holomorphic terms and
meromorphic terms of the type $dh/h =d(\log h)$;
otherwise $d\omega$ would contain a pole of order 2 at the zero locus of $h$.

Consider a local logarithmic differential 2-form $\omega$. Then it admits 
the following expression:
$$ 
\omega = \sum_{i,j} g_{ij} \dfrac{dh_i}{h_i} \wedge \dfrac{dh_j}{h_j}
+ \sum_k \dfrac{dh_k}{h_k}\wedge \beta_k + \gamma,
$$
where $g_{ij}, \beta_k, \gamma$ are local holomorphic functions, 1-forms, and
2-forms, respectively. The general case with non-trivial terms of the type
$g_{ij} \dfrac{dh_i}{h_i} \wedge \dfrac{dh_j}{h_j}$ is very interesting but 
also complicated, so here we will be less ambitious  and study only   
local logarithmic 2-forms without such terms. We introduce the following 
definition.

\begin{definition}
A local differential 2-form of the type
\[
\omega = \sum_{i=1}^k \dfrac{dh_i}{h_i}\wedge \beta_i + \gamma
\]
on $(\mathbb{K}^{2n},O)$ is called a
\textbf{\textit{simple log-symplectic form}}
if it satisfies the following conditions:

i) The local functions $h_1, \hdots, h_k$ vanish at the origin $O$, but
$dh_1 \wedge \hdots \wedge dh_k (O) \neq 0.$

ii) $d\omega = 0$.

iii) $h \omega^n$ is a local regular volume form, i.e.,
$h \omega^n (O) \neq 0,$ where $h = \prod_{i=1}^k h_i$ and $\omega^n$ means the
wedge product of $n$ copies of $\omega$.
\end{definition}

The definition above is valid in many different categories: real analytic,
holomorphic, formal, and smooth. Log-symplectic manifolds, especially in
the case with $k=1$ (i.e., the set of singular points is a smooth 
hypersurface), have been studied by many authors from different points 
of view, where they may be called by other names as well, such as  
$b$-symplectic or $b$-Poisson (the case with $k=1$), or $c$-symplectic,
see, e.g., \cite{GMP-bSymplectic2014,GuLi-LogSymplectic2014,
GLPR-LogSymplectic2017}.

It turns out that non-degenerate log-symplectic structures admit
\textbf{\textit{Darboux-like local normal forms}}.

\begin{theorem}\label{thm:log-symplectic1}
Let $\omega$ be a local (analytic, formal, or smooth) simple log-symplectic
structure in $(\KK^{2n},O)$ whose divisor has $k$ components.
Then there exists a local (analytic, formal or smooth) coordinate system
$(x_1,y_1,\hdots,x_k,y_k,z_1,\hdots,  z_{2(n-k)})$  in which $\omega$
has the following canonical expression:
\[
\omega = \sum_{i=1}^k \dfrac{dx_i}{x_i}\wedge d y_i + 
\sum_{j=1}^{n-k} dz_j \wedge d z_{j+n-k}.
\]
In particular, the dual Poisson structure $\Pi$ of $\omega$ is without poles
and admits the following canonical form:
\[
\Pi = \sum_{i=1}^k  {x_i} \dfrac{\partial}{\partial x_i} \wedge
\dfrac{\partial}{\partial y_i}+ \sum_{j=1}^{n-k} 
\dfrac{\partial}{\partial z_j} \wedge
\dfrac{\partial}{\partial z_{j+n-k}}.
\]
\end{theorem}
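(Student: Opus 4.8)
The plan is to run the Moser path method of Subsection~\ref{subsection:PathMethod}, adapted to the logarithmic setting, after first straightening the divisor and the residue forms by elementary means. For Step~1, condition (i) gives $dh_1\wedge\cdots\wedge dh_k(O)\neq0$, so the implicit function theorem lets me choose coordinates with $h_i=x_i$; then $\omega=\sum_{i=1}^k\dfrac{dx_i}{x_i}\wedge\beta_i+\gamma$ with $\beta_i,\gamma$ holomorphic. Since $\dfrac{dx_i}{x_i}=d(\log x_i)$ is closed, condition (ii) gives $d\omega=-\sum_i\dfrac{dx_i}{x_i}\wedge d\beta_i+d\gamma=0$, and the residue of $d\omega$ along $\{x_i=0\}$, namely $-d(\beta_i|_{\{x_i=0\}})$, must vanish; hence the pullback of $\beta_i$ to $\{x_i=0\}$ is a closed $1$-form. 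By the Poincaré lemma on $\{x_i=0\}$ I write it as $dy_i$ and extend $y_i$ independently of $x_i$. The difference $\beta_i-dy_i$ pulls back to zero on $\{x_i=0\}$, so $\beta_i-dy_i=a_i\,dx_i+x_i\eta_i$ with $a_i,\eta_i$ holomorphic, whence $\dfrac{dx_i}{x_i}\wedge\beta_i=\dfrac{dx_i}{x_i}\wedge dy_i+dx_i\wedge\eta_i$; the holomorphic terms $dx_i\wedge\eta_i$ are absorbed into $\gamma$, giving
\[
\omega=\sum_{i=1}^k\dfrac{dx_i}{x_i}\wedge dy_i+\gamma',\qquad d\gamma'=0,\ \gamma'\ \text{holomorphic}.
\]
Multiplying $\omega^n$ by $h=\prod x_i$ and evaluating at $O$, condition (iii) reads $\big(\bigwedge_i dx_i\wedge dy_i\big)\wedge(\gamma')^{n-k}(O)\neq0$; in particular $dx_1,dy_1,\dots,dx_k,dy_k$ are independent at $O$, so $(x_i,y_i)$ extend to a coordinate system with complementary coordinates $z_1,\dots,z_{2(n-k)}$, and $\gamma'$ is non-degenerate in the $z$-directions at $O$.

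For Step~2, I view $\omega(O)$ as a constant non-degenerate skew form on the $b$-cotangent space spanned by $\dfrac{dx_i}{x_i}$, $dy_i$, $dz_j$. Because replacing $x_i$ by $x_i u$ with $u(O)\neq0$ changes $\dfrac{dx_i}{x_i}$ only by the holomorphic form $\dfrac{du}{u}$, the logarithmic directions are rigid, and a linear coordinate change preserving the divisor brings $\omega(O)$ to the model $\sum_i\dfrac{dx_i}{x_i}\wedge dy_i+\sum_j dz_j\wedge dz_{j+n-k}$ (the linear Darboux form adapted to the coisotropic flag determined by the residues). Writing $\gamma_0=\sum_j dz_j\wedge dz_{j+n-k}$, I may thus assume $\gamma'(O)=\gamma_0$.

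For Step~3, set $\omega_t=\sum_i\dfrac{dx_i}{x_i}\wedge dy_i+\gamma_t$ with $\gamma_t=(1-t)\gamma_0+t\gamma'$. Each $\omega_t$ is closed and, since $\gamma_t(O)=\gamma_0$ for all $t$, is non-degenerate near $O$ uniformly in $t\in[0,1]$. As $\gamma'-\gamma_0$ is closed and vanishes at $O$, the Poincaré lemma gives a holomorphic $1$-form $\sigma$, vanishing to second order at $O$, with $d\sigma=\gamma'-\gamma_0$. I then solve
\[
Y_t\intprod\omega_t=-\sigma .
\]
Writing $Y_t=\sum_i(A_i\partial_{x_i}+B_i\partial_{y_i})+\cdots$, the polar part of $Y_t\intprod\omega_t$ is $\sum_i\tfrac1{x_i}(A_i\,dy_i-B_i\,dx_i)$; since $\sigma$ is holomorphic this must vanish, forcing $A_i=Y_t(x_i)$ (and $B_i$) to be divisible by $x_i$, i.e.\ $Y_t$ is tangent to every $\{x_i=0\}$. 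Non-degeneracy of $\omega_t$ makes such a divisor-tangent $Y_t$ exist and be unique, holomorphic, and vanishing at $O$ (because $\sigma$ does). Its flow $\varphi_t$ then preserves the divisor, fixes $O$, and $\frac{d}{dt}\varphi_t^*\omega_t=\varphi_t^*\big(d(Y_t\intprod\omega_t)+\dot\gamma_t\big)=\varphi_t^*\big(-d\sigma+(\gamma'-\gamma_0)\big)=0$, so $\varphi_1^*\omega=\omega_0$ is the asserted canonical form; dualizing yields the stated expression for $\Pi$. The formal and smooth cases run identically, using the formal/smooth Poincaré lemmas and the standard division techniques for the holomorphic (resp.\ smooth) solvability of $Y_t\intprod\omega_t=-\sigma$.

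The main obstacle is the inversion in Step~3: one must check that divisor-tangency of $Y_t$ is not an extra assumption but is \emph{forced} by the holomorphy of $\sigma$ together with the pole structure of $\omega_t$, so that the Moser flow automatically preserves the logarithmic structure; combined with the $t$-uniform non-degeneracy, this is exactly what lets the plain path method close. The linear normalization of Step~2 is the secondary delicate point, since the logarithmic directions may not be freely mixed with the holomorphic ones.
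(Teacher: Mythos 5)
Your proposal follows the same architecture as the paper's proof: your Step~1 (reducing to $\omega=\sum_i\frac{dx_i}{x_i}\wedge dy_i+\gamma'$ with $\gamma'$ closed and regular) is the paper's Step~1, and your Step~3 (Moser path with a divisor-tangent $Y_t$, solvability coming from non-degeneracy of the logarithmic part plus a perturbation vanishing at $O$) is the paper's Steps~3--4. The gap is Step~2, which you yourself flag as ``the secondary delicate point'' but then dispose of with a false claim. A \emph{linear} change preserving each hyperplane $\{x_i=0\}$ must send $x_i\mapsto c_ix_i$, hence fixes $\frac{dx_i}{x_i}$, and it cannot remove regular cross terms at $O$. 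Concretely, take $n=k=1$ and $\omega=\frac{dx}{x}\wedge dy+c\,dx\wedge dy$ with $c\neq0$: for any linear change $x'=ax$, $y'=bx+ey$ one computes $\omega=\frac{1}{e}\frac{dx'}{x'}\wedge dy'+\frac{c}{ae}\,dx'\wedge dy'$, so the regular part never vanishes at $O$; what normalizes this $\omega$ is the \emph{quadratic} substitution $\tilde y=y+cxy$, which gives $\omega=\frac{dx}{x}\wedge d\tilde y$ exactly. Your $b$-cotangent argument does not rescue the statement either: under the natural map $T^*_O\to{}^bT^*_O$ one has $dx_i=x_i\cdot\frac{dx_i}{x_i}\mapsto 0$ at $O$, so every term of $\gamma'(O)$ of the form $dx_i\wedge\theta$ is invisible in the $b$-cotangent space, and normalizing the $b$-value of $\omega$ at $O$ only yields $\gamma'(O)=\gamma_0+\sum_i dx_i\wedge\theta_i$ with uncontrolled constant covectors $\theta_i$. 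This is exactly why the paper's Step~2 uses two \emph{nonlinear} substitutions, $\tilde x_i=x_i\exp(\psi_i)$ (absorbing the $dy_i\wedge d\psi_i$ terms into the log direction) and $\tilde y_i=y_i+x_i\phi_i$ (absorbing the $dx_i\wedge d\phi_i$ terms), at the price of obtaining the model only modulo a $2$-form vanishing at $O$.

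The gap propagates into Step~3 as written, because you invoke both ``$\gamma_t(O)=\gamma_0$ for all $t$'' (for the $t$-uniform non-degeneracy of $\omega_t$) and ``$\gamma'-\gamma_0$ vanishes at $O$'' (to get $\sigma$ vanishing to second order, hence $Y_t(O)=0$); with residual terms $dx_i\wedge\theta_i$ both premises fail. The repair is not hard and can be done in two ways: (a) carry out the paper's substitutions above, after which $\gamma'-\gamma_0$ genuinely vanishes at $O$; or (b) keep the weaker normalization your $b$-argument actually yields --- only the $(z,z)$-block of $\gamma'(O)$, which \emph{is} visible in ${}^bT^*_O$ and is non-degenerate by condition (iii), needs to be put in the form $\sum_j dz_j\wedge dz_{j+n-k}$ --- and then check that the leftover $dx_i\wedge\theta_i$ terms are harmless: they contribute nothing to the $b$-matrix of $\omega_t$ at $O$, so non-degeneracy along the path survives, and the homotopy-formula primitive of the closed form $\gamma'-\gamma_0$ can always be chosen with $\sigma(O)=0$, which is all that $Y_t(O)=0$ requires. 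With either repair your argument closes and coincides, in substance, with the paper's proof.
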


\begin{proof}
Our proof is based on the path method and consists of four small steps.

\underline{Step 1}.
We begin with a log-symplectic form $\omega$ written as
$$ \omega = \sum_{i=1}^k \dfrac{d h_i}{h_i} \wedge \beta_i + \gamma,$$
where $\beta_i$ are regular 1-forms and $\gamma$ is a regular 2-form.
The closedness condition on $\omega$ gives
$$
0 = d\omega = \sum_{i=1}^k \dfrac{d h_i}{h_i} \wedge d\beta_i + d\gamma, 
$$
which implies that, for each $i \leq k$, $dh_i \wedge d\beta_i$ is divisible 
by $h_i$. In turn, this easily implies, by Poincar\'e's Lemma, that $\beta_i$ 
can be written as $\beta_i = h\beta_i' + g dh_i + dy_i$.
Indeed, we can write $\beta_i = h_i\beta_i' + g dh_i + \xi_i$ in a coordinate 
system $(h_i, v_1,\hdots,v_{2n-1})$, in which $\xi_i$ is the part of 
$\beta_i$ that is independent of $h_i$ and does not contain $dh_i$.
The condition that $dh_i \wedge d\beta_i$ is divisible by $h_i$ means that 
$dh_i \wedge d\xi_i$ is divisible by $h_i$, which  then implies that
$d\xi_i = 0$ (otherwise $dh_i \wedge d\xi_i$ would be non-trivial and not 
divisible by $h_i$). So, by Poincar\'e's Lemma, we can write
$\xi_i = dy_i$. We can forget the term $g dh_i$ in the expression of 
$\beta_i$ because its wedge product with $\dfrac{d h_i}{h_i}$ is zero. 
Hence we can assume that $\beta_i = h_i\beta_i' + dy_i$. Notice that
$\dfrac{d h_i}{h_i} \wedge h\beta_i$ is regular and can be added to 
$\gamma$, so we have
$$ 
\omega = \sum_{i=1}^k \dfrac{d h_i}{h_i} \wedge dy_i + 
(\gamma + \sum_i dh_i \wedge \beta_i')
= \sum_{i=1}^k \dfrac{d h_i}{h_i} \wedge dy_i + \mu
$$
where $y_i$ are local functions (formal, analytic, or smooth)
and $\mu$ is a 2-form.

The value of $h\omega^n$ at $O$, where $h = \prod_{i=1}^k h_i$ and $\omega^n$ 
means the wedge product of $n$ copies of $\omega$, is equal to
$$ 
\dfrac{n!}{(n-k)!} \wedge_{i=1}^k (d h_i \wedge dy_i) \wedge \mu^{n-k} (O)
$$
if $k \leq n$, and is equal to 0 if $k > n$. This implies that $k \leq n$  
and  $ \wedge_{i=1}^k (d h_i \wedge dy_i) \wedge \mu^{n-k} (O)$ is a 
regular volume form. In particular, $(h_1,y_1,\hdots, h_k,y_k)$ can be 
completed to a regular local coordinate system $(h_1,y_1,\hdots, 
h_k,y_k, z_1, \hdots, z_{2n-2k})$.

\underline{Step 2}. Write $\mu$ as
$$ 
\mu = \sum_i dh_i \wedge d\phi_i - \sum_i dy_i \wedge d\psi_i + \mu_0 + \mu_1
$$
where $\phi_i$ and $\psi_i$ are linear functions in the coordinates
$(h_1,y_1,\hdots, h_k,y_k, z_1, \hdots, z_{2n-2k})$, $\mu_0$ is a constant 
2-form in these coordinates which contains only terms $dz_i \wedge dz_j$, 
and $\mu_1$ is a 2-form which vanishes at the origin. Then $\omega$ can be 
written as
$$
\omega = \sum_{i=1}^k \dfrac{d h_i}{h_i} \wedge dy_i + \mu
=  \sum_{i=1}^k\left( \dfrac{d h_i}{h_i} + d\psi_i\right) \wedge 
d(y_i + h_i\phi_i) +  \mu_0 + \mu_1',
$$
where $\mu_2$ is a 2-form which vanishes at $O$. In other words, we have
$$
\omega = \sum_{i=1}^k \dfrac{d h_i}{h_i} \wedge dy_i + \mu
=  \sum_{i=1}^k \dfrac{d \tilde{h}_i}{\tilde{h}_i} \wedge d\tilde{y}_i  
+ \mu_0 + \mu_1',
$$
where $\tilde{h}_i = h_i \exp(\psi_i)$ and $\tilde{y_i} = y_i + h_i\phi_i$.
Notice that $\tilde{h}_i  = h_i + h.o.t.$ (higher order terms) and
$\tilde{y}_i  = y_i + h.o.t.$, so
$(\tilde{h}_1,\tilde{y}_1,\hdots, \tilde{h}_k,\tilde{y}_k, z_1, \hdots, 
z_{2n-2k})$ is still a local coordinate system. The condition 
$h\omega^n (O) \neq 0$ 
means that $\mu_0^{n-k}(O) \neq 0$, i.e., $\mu_0^{n-k}(O)$ is a 
symplectic form of degree $2(n-k)$ in the coordinates 
$(z_1, \hdots, z_{2n-2k})$. By the Darboux theorem, by a linear
change of the coordinates $(z_1,\hdots, z_{2(n-k)})$, we may assume that
$$
\mu_0 = \sum_{j=1}^{n-k} dz_j \wedge d z_{j+n-k}.
$$

\underline{Step 3}. Put
$$ 
\omega_0 = \sum_{i=1}^k \dfrac{dh_i}{h_i}\wedge d y_i + 
\sum_{j=1}^{n-k} dz_j \wedge d z_{j+n-k}
$$
and define the following linear path of 2-forms for $t \in [0,1]$:
$$
\omega_t = t\omega_1 + (1-t)\omega_0
= \sum_{i=1}^k \dfrac{d h_{i,t}}{h_{i,t}} \wedge
d y_{i,t} + \sum_{j =1}^{n-k} dz_j \wedge d z_{j+n-k} + \mu_{1,t},
$$
where $h_{i,t} = h_i \exp (t\psi_i)$, $y_{i,t} = y_i + th_i\phi_i$, and 
$\mu_{1,t}$ is a 2-form depending polynomially on $t$ which vanishes at 
the origin.

Due to the closedness of $\omega$, we can also write
$$ 
\dfrac{d}{dt}\omega_t = \omega - \omega_0 = 
\sum_i dh_i \wedge d\phi_i - \sum_i dy_i \wedge d\psi_i +  \mu_1 = d\xi
$$
where $\xi$ is an 1-form which vanishes at the origin.

To show that $\omega$ is locally (or formally) isomorphic to the canonical 
form $\omega_0$ via a local (or formal) diffeomorphisms, we now invoke 
the path method with respect to the path $(\omega_t)$  and construct the time-1 
flow of the time-dependent vector field $X_t$ given by the (time-dependent) 
equation:
$$ 
X_t \intprod \omega_t = \xi.
$$

\underline{Step 4}. The last step is to verify that the equation above does 
admit a local (formal, analytic, or smooth) solution $X_t$ (which depends 
smoothly on $t$) such that $X_t(O) = 0.$ Indeed, the 2-form
$$
\sum_{i=1}^k \dfrac{d h_{i,t}}{h_{i,t}} \wedge d y_{i,t} 
+ dz_j \wedge d z_{j+n-k}
$$
sends the vector fields $h_{i,t}\dfrac{\partial}{\partial h_{i,t}}$,
$h_{i,t}\dfrac{\partial}{\partial y_{i,t}}$, $\dfrac{\partial}{\partial z_j}$,
$\dfrac{\partial}{\partial z_{j+n-k}}$
(written with respect to the coordinate system
$(h_{i,t}, y_{i,t}, z_j)$), via contraction, to the 1-forms (up to a sign)
$d y_{i,t}$, $d h_{i,t}$, $d z_{j+n-k}$,  $d z_{j}$. The contraction map is 
hence an isomorphism from the submodule of vector fields generated by the 
vector fields $\left\{h_{i,t}\dfrac{\partial}{\partial h_{i,t}},\,
y_{i,t}\dfrac{\partial}{\partial y_{i,t}},\,\dfrac{\partial}{\partial z_j},\,
\dfrac{\partial}{\partial z_{j+n-k}}\right\}$ to the module of 1-forms. Since
$\mu_{1,t}$ is a small perturbation term (because it vanishes at $O$), the 
contraction map by $\omega_t$ is also an isomorphism between these two modules, 
so the equation $ X_t \intprod \omega_t = \xi$ can be solved (with an unique 
solution) and, moreover, $X_t(O) = 0$ because $\xi (O) =0$.
This finishes the proof. (The $h_i$ are renamed $x_i$ at the end.)
\end{proof}

\begin{remark} The case with $k=1$ of Theorem \ref{thm:log-symplectic1} was 
obtained before by Gulliemin-Miranda-Pires \cite{GMP-bSymplectic2014}, though 
the general case (with arbitrary $k$) does not seem to have been written down 
explicitly anywhere before, as far as we know. One can also prove Theorem 
\ref{thm:log-symplectic1} differently, e.g., by the following steps: 
i) Show that the associated Poisson structure $\Pi$ of $\omega$ is smooth; 
ii)  Show that the Lie algebra which corresponds to the linear transversal part 
of $\Pi$ at the origin is isomorphic to the direct sum of $k$ copies of 
$af\!f(1)$, where $af\!f(1)$ denotes the 2-dimensional Lie algebra
of affine transformations of the line; iii) Use the result of Dufour and 
Molinier \cite{DuMo-aff1995} about the linearizability of
Poisson structures with such a linear part. Our proof of Theorem 
\ref{thm:log-symplectic1} is written in  a way that  
immediately extends to the equivariant case when there is a compact 
group action which preserves the log-symplectic form.
\end{remark}

\subsection{Equivariant normalization of log-symplectic structures}

\begin{theorem} \label{thm:log-symplectic2}
Let $\omega$ be a local (formal, analytic, or smooth) non-degenerate 
log-symplectic structure in $(\mathbb{K}^{2n},O)$ whose divisor contains 
$k$ components. Let $\rho$ be a local (formal, analytic, or smooth) action 
of a compact Lie group $G$ on $(\mathbb{K}^{2n},O)$ which fixes the origin 
$O$ and which preserves $\omega$. Then there exists a local (formal,
analytic, or smooth) coordinate system
$(x_1,y_1,\hdots,x_k,y_k,z_1,\hdots, z_{2(n-k)})$  in which the action
$\rho$ is linear and the form $\omega$
has the following Darboux-like canonical expression:
\[
\omega = \sum_{i=1}^k \dfrac{dx_i}{x_i}\wedge d y_i + 
\sum_{j=1}^{n-k} dz_j \wedge d z_{j+n-k}.
\]
\end{theorem}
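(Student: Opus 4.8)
The plan is to upgrade the path-method proof of Theorem \ref{thm:log-symplectic1} to an equivariant one, exactly as anticipated in the remark following that theorem; the engine is the equivariant Moser path method of Lemma \ref{lem:EquivariantPath}. First I would dispose of $\rho$ itself: by Bochner's averaging formula \cite{Bochner-Compact1945} there is a local (formal, analytic, or smooth) diffeomorphism $\Phi$ fixing $O$ that conjugates $\rho$ to its linearization. Replacing $\omega$ by $\Phi^*\omega$ --- which is again a simple log-symplectic form, since closedness, the transversality $dh_1\wedge\cdots\wedge dh_k(O)\neq 0$, and $h\omega^n(O)\neq 0$ are all diffeomorphism-invariant --- we may assume from the outset that $\rho$ is \emph{linear} and preserves $\omega$. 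It then suffices to produce a $G$-equivariant normalizing diffeomorphism for $\omega$, since a map commuting with $\rho$ automatically keeps $\rho$ linear.

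Next I would build a $\rho$-invariant reference form $\omega_0$ in canonical shape together with a $\rho$-invariant path joining it to $\omega$, by rerunning Steps 1--3 of Theorem \ref{thm:log-symplectic1} while averaging every choice over $G$. The singular divisor $D$ is $\rho$-invariant because $\rho$ preserves $\omega$; the Poincar\'e-lemma primitives $y_i$ used to reduce each $\beta_i$ to $dy_i$ can be produced by a $G$-averaged homotopy operator, hence chosen equivariantly; and the constant transverse symplectic form $\mu_0$ can be brought to $\sum_j dz_j\wedge dz_{j+n-k}$ by an equivariant linear symplectic normalization of the $G$-representation on the transverse space. A convenient way to organize this is to observe that the canonical form is quasi-homogeneous of weight $2$ for the quasi-Euler field $E=2\sum_i(x_i\partial_{x_i}+y_i\partial_{y_i})+\sum_j z_j\partial_{z_j}$, i.e. $\cL_E\omega_0=2\omega_0$, while every regular $2$-form vanishing at $O$ has weight $\geq 3$; thus $\omega_0$ is exactly the weight-$2$ part of $\omega$. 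Averaging $E$ over $G$ and diagonalizing the (semisimple) linear part of the average by an equivariant linear change of coordinates produces a $\rho$-invariant grading, and the weight-$2$ component of $\omega$ in that grading is a $\rho$-invariant $\omega_0$ in canonical form. With $\omega_0$ invariant, the linear interpolation $\omega_t=t\omega+(1-t)\omega_0$ is a $\rho$-invariant path; since $\omega-\omega_0$ is exact and $\rho$-invariant, averaging a primitive over $G$ gives $\tfrac{d}{dt}\omega_t=d\xi$ with $\xi$ invariant and $\xi(O)=0$.

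Finally I would run the equivariant path method: solve $X_t \intprod \omega_t = \xi$, which, as in Step 4 of Theorem \ref{thm:log-symplectic1}, has a unique solution with $X_t(O)=0$ because contraction by $\omega_t$ is an isomorphism onto the module of $1$-forms, and then average $X_t$ over $G$ as in Lemma \ref{lem:EquivariantPath} to get a $G$-invariant field $Y_t$ solving the same equation and still vanishing at $O$. Its time-$1$ flow is a $G$-equivariant diffeomorphism carrying $\omega$ to $\omega_0$ while keeping $\rho$ linear, and the whole argument runs uniformly in the formal, analytic, and smooth categories. I expect the main obstacle to be the equivariant bookkeeping in Steps 1--2, namely making $\omega_0$ \emph{exactly} $\rho$-invariant: $G$ may permute the components of the divisor and mix the functions $y_i$ and the transverse coordinates $z_j$ among themselves, so the defining data and the homotopy operators must be averaged with care, tracking the finite component group of $G$, rather than normalizing each component in isolation. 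Once an invariant canonical $\omega_0$ and an invariant path are secured, the remainder is a routine equivariant rerun of Theorem \ref{thm:log-symplectic1}.
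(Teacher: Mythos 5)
Your overall frame is the right one, and your endgame (an invariant exact path plus the equivariant Moser method of Lemma \ref{lem:EquivariantPath}, with the contraction argument of Step 4 of Theorem \ref{thm:log-symplectic1}) coincides with the paper's. The genuine gap is in the middle step: producing a $\rho$-invariant $\omega_0$ in canonical form. Your quasi-homogeneity shortcut conflates two different gradings. In the coordinates furnished by Theorem \ref{thm:log-symplectic1}, the weight-$2$ part of $\omega$ is indeed $\omega_0$ (your observation that regular $2$-forms vanishing at $O$ have weight $\geq 3$ is correct), but in those coordinates $\rho$ is \emph{not} linear, so this weight-$2$ part is not $\rho$-invariant. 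After you average $E$ and pass to the invariant grading, all you know is $\cL_{E_\rho}\omega=2\omega$ (better said of the dual, regular, Poisson structure $\Pi$, since a form with poles has no termwise monomial decomposition), i.e.\ $\omega$ is quasi-homogeneous of weight $2$ in the new coordinates --- and for $k<n$, with mixed weights $(2,2,1)$ on $(x_i,y_i,z_j)$, quasi-homogeneity does \emph{not} imply canonicity: the space of closed, weight-$2$ singular $2$-forms contains non-canonical mixing terms such as $\dfrac{dx_i}{x_i}\wedge dx_j$, $\dfrac{dx_i}{x_i}\wedge dy_j$ ($j\neq i$), and $\dfrac{dx_i}{x_i}\wedge d(z_az_b)$. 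This is exactly where the log-symplectic case differs from the folded case of Theorem \ref{thm:folded-symplectic}, where the eigenvalue-$6$ monomial $2$-forms happen to be precisely $dv_i\wedge dv_j$ and $u_k\,du_i\wedge du_j$, so that the eigenspace argument closes; here it does not, and the equivariant elimination of the mixing terms is the actual content of the theorem, which your sketch omits. (Two smaller gaps: the semisimplicity of the linear part of $E_\rho$ is asserted, not proved --- it holds because the tangent space at $O$ of the symplectic leaf of $\Pi$ is $\rho$-invariant, so every conjugate of $E^{(1)}$ is block-triangular with scalar diagonal blocks $2I$ and $I$, a convex condition; and your fallback of ``averaging every choice in Steps 1--3'' is a restatement of the problem rather than a proof: averaging the primitives $y_i$ need not preserve either the regularity of $\omega-\sum_i\frac{dh_i}{h_i}\wedge dy_i$ or the independence of the resulting coordinate functions, and the permutation of divisor components is flagged but never handled.)

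For comparison, the paper circumvents all of this by working with the dual Poisson structure: it verifies a division property for $\Pi$ and invokes the equivariant splitting theorem \cite{MirandaZung-Splitting2006} (alternatively \cite{FrMa-Normal2017}) to split off the regular symplectic factor $G$-equivariantly, thereby reducing to the totally singular case $k=n$. In that case the canonical form satisfies $s_\lambda^*\omega=\lambda\omega$ for the \emph{uniform} scaling $s_\lambda(x,y)=(\lambda x,\lambda y)$, and this genuine homogeneity (unavailable for $k<n$) forces the linear part $\rho_1$ of $\rho$ to preserve the canonical $\omega$; Bochner's theorem then replaces $(\omega,\rho)$ by $(\omega+\mu,\rho_1)$ with $\mu$ regular, and a $\rho_1$-equivariant rerun of Theorem \ref{thm:log-symplectic1} finishes the proof. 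To repair your argument, either import that splitting theorem to reduce to $k=n$ before applying your homogeneity reasoning, or supply an honest equivariant normalization of the quasi-homogeneous mixing terms; as written, the proposal is incomplete at its central step.
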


\begin{proof} By Theorem \ref{thm:log-symplectic1}, we already know that, 
if we forget about the compact Lie group action, then there is a local 
coordinate system in which the dual Poisson structure $\Pi$ of $\omega$ 
has the expression
$$
\Pi = \sum_{i=1}^k  {x_i} \dfrac{\partial}{\partial x_i} \wedge
\dfrac{\partial}{\partial y_i}+ 
\sum_{j=1}^{n-k} \dfrac{\partial}{\partial w_j} \wedge
\dfrac{\partial}{\partial z_j}.
$$

Using this explicit formula, one verifies easily that $\Pi$ satisfies the 
following \textit{division property}: any local (formal, analytic, or smooth) 
vector field $X$ which is tangent to the symplectic leaves of $\Pi$ is a 
linear combination, with coefficients which are local functions, of local 
Hamiltonian vector fields of $\Pi$. A result of Miranda and Zung 
\cite{MirandaZung-Splitting2006} then says that $\Pi$ admits an 
\textit{equivariant splitting}. In fact, even without this division property, 
Poisson structures would still admit an equivariant splitting, according to 
a more recent result by Frejlich and  Mărcuţ \cite{FrMa-Normal2017}.
This means that one can split $\Pi$ equivariantly with respect to the 
$G$-action into a direct sum of two parts: the symplectic part
and the part which vanishes at zero (like in Weinstein's splitting
theorem \cite{Weinstein-Local1977}); the $G$-action is diagonal, 
acting on each factor separately. So the equivariant normalization of $\Pi$ 
amounts to an equivariant normalization of two separate parts:
the symplectic part (given by the equivariant Darboux theorem), and the part 
which vanishes at zero. Thus, this equivariant splitting result reduces the 
proof of Theorem \ref{thm:log-symplectic2} to the case when the rank of 
$\Pi$ at $O$ is zero, i.e., the case when $k=n$.

Now we can assume, by the case $k=n$ of Theorem \ref{thm:log-symplectic1},
that $\omega$ is of the form
$$
\omega = \sum_{i=1}^n \dfrac{d x_i}{x_i} \wedge dy_i
$$
in some local (or formal) coordinate system $(x_i,y_i)$
(in which the action $\rho$ of $G$ is, a priori, non-linear).
We want to linearize the action  of $G$ while keeping the above canonical 
expression of $\omega$. Notice that,  due to the homogeneity of $\omega$,
the linear part of $\rho$ (with respect to the coordinate system $(x_i,y_i)$),
denoted by $\rho_1$, is a linear $G$-action which also preserves $\omega$. 
By Bochner's local linearization theorem for compact group actions 
\cite{Bochner-Compact1945}, we know that $\rho$ is isomorphic to $\rho_1$ 
via a local (or formal) diffeomorphism
$\Phi$ whose linear part is the identity. This  means that the couple
$(\omega,\rho)$ is  isomorphic to the couple
$(\omega',\rho_1)$ via $\Phi$, where $\omega'=\Phi_*\omega$.
Due to the fact that $\rho$
must preserve the hyperplanes $S_i = \{x_i=0\}$,
we have that $\Phi_*x_i$ (obtained by Bochner's averaging formula)
is divisible by $x_i$. This implies that $\omega'$
has the form
$$
\omega' = \sum_{i=1}^n \dfrac{d x_i}{x_i} \wedge dy_i + \mu,
$$
where $\mu$ is a local holomorphic (or formal, or smooth) 2-form.
Now we can simply repeat the steps of the proof of Theorem
\ref{thm:log-symplectic1}, but in a $\rho_1$-equivariant way,
to conclude that $(\omega',\rho_1)$ is isomorphic to
$(\omega,\rho_1)$. Thus, $(\omega,\rho)$ is isomorphic to $(\omega',\rho_1)$,
which is isomorphic to $(\omega,\rho_1)$. The composed diffeomorphism which moves
$(\omega,\rho)$ to $(\omega,\rho_1)$ is our required normalization map.
\end{proof}

As an immediate consequence of Theorem \ref{thm:log-symplectic2}, we obtain
the following result.

\begin{theorem}
Let $X$ be a formal vector field on $(\KK^{2n},O)$
which vanishes at $O$ and preserves a non-degenerate log-symplectic
symplectic structure $\omega$ with a canonical expression
$\omega = \sum_{i=1}^k \dfrac{dx_i}{x_i}\wedge d y_i + 
\sum_{j=1}^{n-k} dz_j \wedge d z_{j+n-k}$ in some coordinate system.
Then the couple $(X,\omega)$ admits a
formal simultaneous normalization, i.e., there is a formal coordinate 
transformation which normalizes $X$ while keeping the canonical expression 
of $\omega$. Moreover, if everything is locally analytic (real or complex), 
and $X$ is analytically or Darboux integrable, then this 
normalization can be made locally analytic.
\end{theorem}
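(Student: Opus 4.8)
The plan is to reduce everything, exactly as in the (multi-)folded symplectic case, to an application of Theorem~\ref{thm:log-symplectic2} with the compact group $G$ taken to be the associated torus of $X$. First I would introduce the associated torus action $\rho$ of $X$ at $O$, a $\mathbb{T}^\tau$-action where $\tau$ is the toric degree of $X$. The key observation is that $\omega$, written as $\sum_{i=1}^k \dfrac{dx_i}{x_i}\wedge d y_i + \sum_{j=1}^{n-k} dz_j \wedge d z_{j+n-k}$, is a formal rational $2$-form: setting $f = \prod_{i=1}^k x_i$, the product $f\omega$ is a polynomial $2$-form $\Omega$ and $\omega = \Omega/f$. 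Hence the Conservation Theorem~\ref{thm:ConservationProperty} applies and guarantees that the torus action $\rho$ also preserves $\omega$. (Alternatively, one may run the argument through the dual Poisson structure $\Pi$, which is a genuine polynomial bivector field preserved by $X$ and therefore, again by Theorem~\ref{thm:ConservationProperty}, by $\rho$.)

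Next I would apply Theorem~\ref{thm:log-symplectic2} to the action $\rho$, with $G = \mathbb{T}^\tau$. This yields a formal coordinate system in which $\rho$ becomes linear while $\omega$ keeps its canonical Darboux-like expression. It then remains only to translate ``$\rho$ linear'' back into a statement about $X$: by the toric characterization of normalization recalled in Subsection~\ref{subsection:TorusActions}, the vector field $X$ is in Poincar\'e--Dulac normal form if and only if its associated torus action is linear. Consequently, in the coordinate system just produced, $X$ is automatically normalized and $\omega$ is still canonical, which is precisely the desired simultaneous normalization.

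For the analytic statement, I would invoke the integrability input: if $\omega$ and $X$ are analytic and $X$ is analytically or Darboux integrable, then the associated torus action $\rho$ is locally analytic by the theorem of Subsection~\ref{subsection:Integrable}, and the analytic version of Theorem~\ref{thm:log-symplectic2} then delivers an analytic simultaneous normalization. I do not expect a genuine obstacle here, since all the heavy lifting is done by Theorem~\ref{thm:log-symplectic2} and the Conservation Theorem; the only point requiring a moment's care is verifying that $\omega$ (or, more cleanly, its polynomial dual $\Pi$) falls within the scope of Theorem~\ref{thm:ConservationProperty}, so that the associated torus action genuinely preserves $\omega$ and can be fed into the equivariant normalization.
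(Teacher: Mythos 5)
Your proposal is correct and follows exactly the paper's route: the paper's entire proof is ``Just apply Theorem~\ref{thm:log-symplectic2} to the associated torus action of $X$ at $O$,'' and your write-up simply makes explicit the steps the paper leaves implicit (the Conservation Theorem~\ref{thm:ConservationProperty} applied to the rational form $\omega$, or equivalently to its polynomial dual $\Pi$, to see that the torus action preserves $\omega$; the equivalence between linearity of the torus action and the Poincar\'e--Dulac normal form of $X$; and analyticity of the torus action under analytic or Darboux integrability for the analytic case).
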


\begin{proof}
Just apply Theorem \ref{thm:log-symplectic2} to the  associated torus action 
of $X$ at $O$.
\end{proof}

\section{Systems with a singular contact structure}
\label{section:SingularContact}

In this section, we study generic singular contact distributions $\xi$
which are given by the kernel of singular contact forms 
$\alpha=\sum_{i=0}^{2n}f_i(x)dx_i$ on $(\KK^{2n+1}, O)$, where $O$ 
denotes the origin of $\KK^{2n+1}$.
The word ``generic" here means that $d\alpha$ is a regular
presymplectic form of rank $2n$ near $O$, i.e., $(d\alpha)^n(O)\neq 0$, 
and that, if we write $\alpha\wedge(d\alpha)^n=f dx_0\wedge 
dx_1\wedge\cdots\wedge dx_{2n}$ for some function $f$, then $f(O)=0$ 
but $df(O) \neq 0$.

We   consider the following two types  of singular contact forms 
$\alpha=\sum_{i=0}^{2n}f_i(x)dx_i$:
\begin{enumerate}
\item If $\alpha(O)\neq 0$, i.e., the kernel distribution $\xi=\ker\alpha$ 
is still regular at $O$, then $\alpha$ is called a 
\textit{\textbf{non-vanishing singular contact form}}.
\item Suppose that $\alpha(O)=0$ and, in addition, it satisfies the 
following non-degeneracy condition:
$$
F=(f_0,\ldots,f_{2n}): (\mathbb{K}^{2n+1}, O)\rightarrow 
(\mathbb{K}^{2n+1}, O) \ \text{is a local diffeomorphism}.
$$
Then $\alpha$ is called a \textit{\textbf{non-degenerate singular contact form}}.
\end{enumerate}

\begin{definition}
Let $X$ be a vector field, $\phi_X^t$ its local flow,   and $\xi$ a singular 
contact distribution defined by some singular contact form $\alpha$ on a 
$(2n+1)$-dimensional manifold $M$. We say that $X$ \textbf{\textit{preserves 
$\xi$}} if the pull back $(\phi_X^t)^*\alpha=f_t \alpha$, where $f_t$ is a 
family of smooth functions on $M$.  In particular, we say the vector field 
\textbf{\textit{preserves the singular contact form}} $\alpha$ if $f_t\equiv1$.
\end{definition}

It is easy to see that  $X$ preserves $\xi$ if and only if $\alpha$ is a 
semi-invariant of $X$, i.e., $\cL_X\alpha=
\left(\left.\dfrac{d}{dt}\right|_{t=0}f_t\right) \alpha$. The vector field 
$X$ preserves $\alpha$ if and only if $\cL_X\alpha=0$.

If a vector field $X$ preserves a singular contact form $\alpha$, then it
naturally preserves the hypersurface $S=\{x \ |\ \alpha\wedge (d\alpha)^n(x)=0\}$ 
of singular points of $\alpha$, i.e., $S$ is invariant under the flow of $X$.
In what follows, we   focus on the relatively simpler case
when $X$ preserves the singular contact form.

\subsection{Non-vanishing singular contact forms}

In this subsection, we assume that $\alpha (O) \neq0$, $(d\alpha)^n (O) \neq0$ 
and $\alpha\wedge(d\alpha)^n$ is a folded volume form. Then the $2n$-dimensional 
distribution $\xi$ is a contact distribution except on the hypersurface $S$ 
of singular points. A classical result of Martinet \cite{Martinet-Formes1970} 
states that $\alpha$ admits a Darboux normal form
\begin{equation}\label{eq:5.2}
    \alpha=\pm \theta d\theta+(x_1+1)dx_{n+1}+\sum_{k=2}^n x_kdx_{n+k},
\end{equation}
whose set of singular points is $S = \{\theta=0\}$ in these  coordinates.
In the real case, the different signs in front of the first term are important 
because they give different nonequivalent normal forms. Since  the proofs below 
are identical with both signs, we shall ignore them from now on.

\begin{lemma}\label{lemma:Contact-V.F.}
Suppose that  $X=f_0\dfrac{\partial}{\partial\theta}+
\sum_{i=1}^{2n}f_i\dfrac{\partial}{\partial x_i}$ preserves the 
singular contact form  \eqref{eq:5.2}, i.e., $\cL_X\alpha=0$. Then 
$f_0\equiv0$ and $f_1,\ldots,f_{2n}$ are independent of $\theta$.
\end{lemma}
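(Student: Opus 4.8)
The plan is to compute $\cL_X\alpha$ via Cartan's magic formula $\cL_X\alpha = \iota_X\, d\alpha + d(\iota_X\alpha)$ and to extract a system of scalar equations by matching the coefficients of the coframe $d\theta, dx_1,\ldots, dx_{2n}$. The observation that drives everything is that the singular term contributes nothing to the exterior derivative: since $d(\pm\theta\, d\theta) = \pm\, d\theta\wedge d\theta = 0$, we obtain
\[
d\alpha = \sum_{k=1}^n dx_k\wedge dx_{n+k},
\]
the standard symplectic form in the $x$-variables, which carries no $\theta$-dependence and in which $d\theta$ does not appear. Hence $\iota_X\, d\alpha$ is a combination of $dx_1,\ldots, dx_{2n}$ alone, so the entire $d\theta$-component of $\cL_X\alpha$ must come from $d(\iota_X\alpha)$.

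Next I would set $g := \iota_X\alpha = \pm\theta f_0 + (x_1+1)f_{n+1} + \sum_{k=2}^n x_k f_{n+k}$ and read off three families of equations from $\cL_X\alpha = 0$. Matching the $d\theta$-coefficient yields $\partial g/\partial\theta = 0$, i.e.\ $g$ is independent of $\theta$. Using $\iota_X(dx_k\wedge dx_{n+k}) = f_k\, dx_{n+k} - f_{n+k}\, dx_k$ and matching the $dx_k$- and $dx_{n+k}$-coefficients for $k=1,\ldots,n$ gives the ``contact-Hamiltonian'' relations
\[
f_{n+k} = \frac{\partial g}{\partial x_k}, \qquad f_k = -\frac{\partial g}{\partial x_{n+k}}, \qquad k=1,\ldots,n.
\]

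From here both conclusions follow quickly. Since $g$ is independent of $\theta$, so are all of its $x$-derivatives, and the relations above then show that every one of $f_1,\ldots, f_{2n}$ is independent of $\theta$; this is the second assertion. For the first assertion, observe that the combination $(x_1+1)f_{n+1} + \sum_{k=2}^n x_k f_{n+k}$ is now $\theta$-independent, whence $\pm\theta f_0 = g - \big[(x_1+1)f_{n+1}+\sum_{k=2}^n x_k f_{n+k}\big]$ is $\theta$-independent as well. Writing $\theta f_0 = h(x)$ with $h$ free of $\theta$ and setting $\theta = 0$ forces $h\equiv 0$, so $\theta f_0\equiv 0$; since $f_0$ is a formal (resp.\ analytic, smooth) function, dividing by $\theta$ gives $f_0\equiv 0$.

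I do not expect a serious obstacle: the computation is elementary once one notices that $d\alpha$ loses its singular term, so that the preferred direction $\partial/\partial\theta$ is invisible to $d\alpha$ and can only be controlled through $\iota_X\alpha$. The single point needing a little care is the final division by $\theta$, where one invokes that a formal or analytic function vanishing for all $\theta\neq 0$ vanishes identically; the sign $\pm$ plays no role anywhere, as the paper has already noted.
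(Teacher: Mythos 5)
Your proof is correct, and it reorganizes the argument in a way that genuinely differs from the paper on the first assertion. The paper proves that $f_1,\ldots,f_{2n}$ are independent of $\theta$ geometrically: since $X$ preserves $\alpha$ it preserves the presymplectic form $\omega=d\alpha$ and hence its kernel distribution $\KK\,\dfrac{\partial}{\partial\theta}$, and the computation $\left[\dfrac{\partial}{\partial\theta},X\right]\intprod\omega=\dfrac{\partial}{\partial\theta}\intprod\cL_X\omega-\cL_X\left(\dfrac{\partial}{\partial\theta}\intprod\omega\right)=0$ forces $\left[\dfrac{\partial}{\partial\theta},X\right]$ to be proportional to $\dfrac{\partial}{\partial\theta}$, i.e., $\partial f_i/\partial\theta=0$ for $i\geq1$. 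You obtain the same fact by pure coefficient matching: the $d\theta$-coefficient of Cartan's formula shows, unconditionally (since $\iota_X\,d\alpha$ contains no $d\theta$ at all), that $g=\iota_X\alpha$ is $\theta$-independent, and the $dx$-coefficients give the Hamiltonian-type relations $f_{n+k}=\partial g/\partial x_k$, $f_k=-\partial g/\partial x_{n+k}$, which transfer that independence to all the $f_i$. From there both proofs conclude $f_0\equiv0$ in essentially the same way: the $d\theta$-component forces $\theta f_0$ to be independent of $\theta$, hence identically zero, hence $f_0\equiv0$. What your route buys: it is entirely elementary and self-contained, and the relations you derive are exactly the interpretation of $X$ as the Hamiltonian vector field of $H=X\intprod\alpha$ with respect to $\omega_0=d\alpha$, which the paper has to re-derive separately at the start of the subsequent theorem (the case $X(O)=0$). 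What the paper's route buys: the kernel-preservation argument does not depend on the explicit coefficients of $\alpha$, so it transfers verbatim to the tangential case $\alpha=d(\theta^m-x_1\theta)+\gamma$ treated later in the paper, where your explicit bookkeeping would become considerably messier because there $\iota_X\alpha$ mixes $f_0$ and $f_1$ with $\theta$-dependent coefficients and the analogue of your final division step requires an induction on divisibility by powers of $x_1$.
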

\begin{proof}
Since $X$ preserves $\alpha$, it also preserves the  presymplectic form 
$\omega=d\alpha$, and hence it preserves the kernel 
$\KK\dfrac{\partial}{\partial\theta}$ of $\omega$. Therefore, we have
 \[
 \left[\dfrac{\partial}{\partial\theta},
 X\right]\intprod\omega=\dfrac{\partial}{\partial\theta}\intprod\cL_X\omega-
 \cL_X\left(\dfrac{\partial}{\partial\theta}\intprod\omega\right)=0,
 \]
i.e., $\left[\dfrac{\partial}{\partial\theta},X\right]$ is also in the 
kernel of $\omega$. On the other hand, we have
 \[
 \left[\dfrac{\partial}{\partial\theta},X\right]=
 \dfrac{\partial f_0}{\partial\theta}\dfrac{\partial}{\partial\theta}+
 \sum_{i=1}^n\dfrac{\partial f_i}{\partial\theta}\dfrac{\partial}{\partial x_i}.
 \]
 Therefore, $f_1,\ldots,f_{2n}$ are independent of $\theta$.

Since $0=\cL_X\alpha=X\intprod d\alpha+d(X\intprod\alpha)$, its 
$d\theta$-component  vanishes. Note that $X\intprod d\alpha$ does not 
contribute any term to the $d\theta$-component, so the $d\theta$-component 
must come from the term $d(X\intprod\alpha)$. Recall that for $i=1,\ldots,n$, 
the coefficient function in front of $dx_i$ in $\alpha$, as well as  $f_i$, 
are independent of $\theta$; therefore, this component is just 
$\dfrac{\partial(\theta f_0)}{\partial\theta}d\theta$, which implies that 
$f_0$ is identically zero.
\end{proof}

\subsubsection{The case $X(O) \neq 0$}

\begin{theorem}
Let $\alpha$ be a non-vanishing singular contact form on $(\KK^{2n+1},O)$ 
written as $\alpha= \theta d\theta+(x_1+1)dx_{n+1}+\sum_{i=2}^n x_idx_{n+i}$. 
Suppose $\alpha$ is preserved by a vector field $X$ with non-vanishing 
constant part $X^{(0)}=\sum_{i=1}^{2n}c_i\dfrac{\partial}{\partial x_i}\neq0$. 
Then $X$ can be straightened out to $X=\dfrac{\partial}{\partial x_{2n}}$, 
if $c_{n+1}=0$, or to $X=c_{n+1}\dfrac{\partial}{\partial x_{n+1}}$, if 
$c_{n+1}\neq0$, without changing the expression of $\alpha$.
\end{theorem}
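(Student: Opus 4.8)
My plan is to use Lemma~\ref{lemma:Contact-V.F.} to recast $X$ as a Hamiltonian vector field on the $2n$-dimensional $x$-space and then normalize its contact Hamiltonian by symplectization together with the classical Darboux theorem for contact forms. Since $\cL_X\alpha=0$, Lemma~\ref{lemma:Contact-V.F.} gives $X=\sum_{i=1}^{2n}f_i(x)\,\partial/\partial x_i$ with every $f_i$ independent of $\theta$ and no $\partial/\partial\theta$-component. Writing $\beta=(x_1+1)dx_{n+1}+\sum_{i=2}^n x_i dx_{n+i}$, so that $\alpha=\theta\,d\theta+\beta$ and $\omega:=d\alpha=d\beta=\sum_{i=1}^n dx_i\wedge dx_{n+i}$ is the standard symplectic form on the $x$-space, the equation $\cL_X\alpha=0$ together with $X(\theta)=0$ forces $\cL_X\beta=0$. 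Hence $X$ is symplectic, and in fact Hamiltonian: with $H:=\alpha(X)=\beta(X)$ one has $X\intprod\omega=-dH$. Moreover $H$ is a first integral of $X$, is $\theta$-independent, satisfies $H(O)=c_{n+1}$ and $dH(O)\neq0$ (the latter because $X(O)=X^{(0)}\neq0$), and, introducing the Liouville field $Z$ with $Z\intprod\omega=\beta$, one checks $Z(H)=\beta(X)=H$, i.e. $H$ is homogeneous of degree one along the Liouville flow.

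Next I reduce the whole problem to the $x$-space. By Lemma~\ref{lemma:Contact-V.F.} every strict contact vector field (i.e. every $Y$ with $\cL_Y\alpha=0$) is $\theta$-independent and has no $\partial/\partial\theta$-component; hence the connected group of $\alpha$-preserving diffeomorphisms fixes $\theta$ and acts on the $x$-space by $\beta$-preserving symplectomorphisms. Thus it suffices to find a diffeomorphism $\phi$ of the $x$-space with $\phi^*\beta=\beta$ carrying $X$ to the model (extending it by the identity in $\theta$ then preserves $\alpha$). Since $\partial/\partial x_{2n}$ and $c_{n+1}\partial/\partial x_{n+1}$ are the Hamiltonian fields of $x_n$ and of $c_{n+1}(x_1+1)$ respectively, the task becomes: bring $H$ to the model Hamiltonian by a $\beta$-preserving change of coordinates, and the two cases of the theorem are exactly $H(O)=c_{n+1}\neq0$ and $H(O)=0$.

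When $c_{n+1}\neq0$ the argument is clean: because $Z(H)=H$, the level set $\Sigma=\{H=c_{n+1}\}$ is a smooth hypersurface through $O$ transverse to $Z$ (as $Z(H)(O)=c_{n+1}\neq0$), so $\lambda:=\beta|_\Sigma$ is a contact form with $\lambda(O)\neq0$. The Darboux normal form for contact forms puts $\lambda$ into standard shape, matched with the model slice $\{x_1=0\}$, and propagating these coordinates by the Liouville (symplectization) flow recovers $\beta$ in its canonical form while $H$ becomes $c_{n+1}(x_1+1)$; hence $X=c_{n+1}\partial/\partial x_{n+1}$.

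The case $c_{n+1}=0$ is where I expect the real difficulty. Now $\{H=0\}$ is tangent to $Z$ at $O$ and cannot be used as a contact slice, so I would instead fix any hypersurface $\Sigma_0$ through $O$ transverse to $Z$; then $\beta$ is the symplectization of $(\Sigma_0,\lambda_0:=\beta|_{\Sigma_0})$ and $H$ corresponds to a contact Hamiltonian $h=H|_{\Sigma_0}$ that is invariant under the Reeb flow of $\lambda_0$ (reflecting $\cL_X\beta=0$), with $h(O)=0$ and $dh(O)\neq0$ (because $dH(O)$ annihilates $Z(O)$ but not $T_O\Sigma_0$). Normalizing $h$ to the coordinate $x_n$ is the delicate step: one passes to the local, symplectic, quotient of $\Sigma_0$ by the Reeb flow, where $h$ descends to a function with nonvanishing differential that is straightened by the symplectic flow-box theorem; this symplectomorphism must then be lifted to a strict contactomorphism of $\lambda_0$ (adjusting the Reeb phase) and re-symplectized, so as to obtain $\phi^*\beta=\beta$ with $H=x_n$, hence $X=\partial/\partial x_{2n}$. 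The hard part is precisely this passage through the Reeb quotient in the degenerate case $H(O)=0$, where the homogeneity must be tracked carefully so that the resulting coordinate change preserves $\beta$ exactly rather than only up to a conformal factor.
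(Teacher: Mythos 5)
Your argument is correct (and, like the paper's, it implicitly needs $n\geq 2$ in the degenerate case), but it takes a genuinely different route. The paper works in the opposite order: it first straightens $X$ by the ordinary flow-box theorem, thereby destroying the normal form of $\gamma=\alpha-\theta\,d\theta$, and then repairs $\gamma$: it splits the transformed form as $\gamma'+g\,dx_k$ along the flow-invariant hyperplane, verifies by an explicit computation of constant and linear parts that $\gamma'$ satisfies Martinet's genericity conditions in dimension $2n-1$, invokes Martinet's theorem with coordinate changes not involving the flow coordinate, and finally uses $g$ itself as a new coordinate (the case $c_{n+1}\neq0$ needing an extra preparatory linear substitution). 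You instead never disturb the canonical form of $\beta$ and normalize the invariantly defined Hamiltonian $H=\alpha(X)=\beta(X)$, which satisfies $X\intprod d\beta=-dH$ and $Z(H)=H$ for the Liouville field $Z$: the theorem becomes the statement that a degree-one homogeneous Hamiltonian with $dH(O)\neq0$ can be carried to $x_n$ or to $c_{n+1}(x_1+1)$ by a $\beta$-preserving map, which you obtain from strict contact Darboux on a slice transverse to $Z$ when $H(O)=c_{n+1}\neq0$, and from Reeb reduction, symplectic straightening, and an exact lift (which exists locally by the Poincar\'e lemma --- your ``adjusting the Reeb phase'') when $H(O)=0$. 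Your route avoids all jet computations and any use of Martinet's singular normal form inside the proof, and it makes the dichotomy conceptual: $H(O)=c_{n+1}$ is an invariant of origin-preserving $\alpha$-preserving transformations, which incidentally explains why $c_{n+1}=0$ forces $n\geq2$ (for $n=1$ the would-be model $\partial/\partial x_{2n}=\partial/\partial x_{2}$ has $\beta(\partial/\partial x_{2})(O)=1\neq0$). What the paper's route buys in exchange is elementarity: it stays within the flow-box theorem plus Martinet's theorem, which is needed anyway to put $\alpha$ itself in normal form, and requires no symplectization, Reeb-quotient, or lifting machinery.
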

\begin{proof}
We only need to consider the $1$-form  $\gamma:=\alpha-\theta d\theta$ on the 
hyperplane $\{\theta=0\}$ preserved by $X$ which can be regarded as a vector 
field on the same hyperplane by Lemma \ref{lemma:Contact-V.F.}.  First, we 
normalize the vector field $X$ by  changing only the coordinates 
$x_1,\ldots,x_{2n}$ and then  change $\gamma$ back to normal form, keeping 
the expression of $X$.

If $c_{n+1}=0$, we first assume, without loss of generality, that $c_{2n}\neq0$ 
and  write $X=\dfrac{\partial}{\partial x_{2n}}$ after some coordinate change 
by a local diffeomorphism $\Phi$. Since $\gamma$ is preserved by $X$, it is 
independent of $x_{2n}$ in the new coordinate system and hence we can  write 
it as a sum of a $1$-form $\gamma'$ on $\{x_{2n}=0\}$ and a $1$-form 
$\gamma''=g(x_1,\ldots,x_{2n-1}) dx_{2n}$ where $g$ is a function independent 
of $x_{2n}$.

We claim that $\gamma'$ on $\{x_{2n}=0\}$ has the following properties:
  \begin{itemize}
      \item $\gamma'(O) \neq 0$;
      \item $d\gamma'$ has the maximal possible rank $2n-2$ at $O$;
      \item  $\gamma'\wedge(d\gamma')^{n-1}(O) = 0.$
  \end{itemize}

In order to verify these properties, it is sufficient to calculate the constant 
part $\gamma'^{(0)}$ and the linear part $\gamma'^{(1)}$ of $\gamma'$ at $O$.

Notice that the linear part of the diffeomorphism $\Phi$ in the Straightening 
Out Theorem for vector fields is just $x_i\mapsto x_i-\dfrac{c_i}{c_{2n}}x_{2n}$ 
for $i=1,\ldots,2n-1$ and $x_{2n}\mapsto \dfrac{1}{c_{2n}}x_{2n}$. Denote 
the quadratic part of the old coordinate $x_{n+1}$ in new coordinate system 
(i.e., the quadratic part of the $(n+1)$-th component of $\Phi^{-1}$) by $Q$. 
Then the constant part $\gamma^{(0)}$ and the linear part $\gamma^{(1)}$ of 
$\gamma$ are, respectively,
\[
 \begin{aligned}
 \gamma^{(0)}&= d(x_{n+1}+c_{n+1}x_{2n}),\\
 \gamma^{(1)}&=\sum_{i=1}^{n-1}(x_i+c_ix_{2n})d(x_{n+i}+c_{n+i}x_{2n})+
 (x_n+c_nx_{2n})d(c_{2n}x_{2n})+dQ.
 \end{aligned}
\]
Write $Q$ as the sum of a function $Q'(x_1,\ldots,x_{2n-1})$ independent of 
$x_{2n}$ and a function $Q''$ divisible by $x_{2n}$. Remembering that 
$\gamma'$ is also independent of $x_{2n}$, we conclude that the constant 
part and the linear part of $\gamma'$ are, respectively,
\[
 \begin{aligned}
 \gamma'^{(0)}&= dx_{n+1},\\
 \gamma'^{(1)}&=\sum_{i=1}^{n-1}x_idx_{n+i}+dQ'.
 \end{aligned}
\]
Using these expressions, it is easy to see that the three properties stated 
above  hold.

It follows directly from \cite{Martinet-Formes1970}  that $\gamma'$ has a 
normal form $(x_1+1)d_{x_{n+1}}+\sum_{i=2}^{n-1}x_idx_{n+i}$ by changing 
only the coordinates $x_1,\ldots,x_{2n-1}$. Thus, $\gamma=(x_1+1)d_{x_{n+1}}+
\sum_{i=2}^{n-1}x_idx_{n+i}+g(x_1,\ldots,x_{2n-1})dx_{2n}$ and 
$\dfrac{\partial g}{\partial x_n}(O)\neq 0$ since $(d\gamma)^n(O) \neq 0$.
Then the transformation that maps $x_n$ to $g$ and preserves the other 
coordinates is indeed a local diffeomorphism not affecting the expression 
$X=  \dfrac{\partial}{\partial x_{2n}}$, since $g$ is independent of $x_{2n}$, 
and keeps $\gamma$ in normal form.

Note that although the position of the coordinate $x_i$ in $\gamma$ is not 
exactly the same for $i=1$, $2\leqslant i\leqslant n$, and $n+1\leqslant 
i\leqslant 2n$, the argument above works with very few changes. Indeed, if 
we assume at the beginning $c_k\neq0$ ($k\neq n+1)$ so that 
$X=\dfrac{\partial}{\partial x_k}$,  we obtain a  similar splitting 
$\gamma=\gamma'+\gamma''$ with     $\gamma'$ having the three properties
stated above. (Now the $dx_k$ component $\gamma''$ is independent of 
$x_k$ and $\gamma'^{(1)}=  \displaystyle\sum_{i=1,i\neq k}^{n}x_idx_{n+i}+dQ'$.) 
We only need to rename the coordinates after the final normalization of $\gamma$.

If $c_{n+1}\neq0$, we need to use a supplementary linear transformation before  
normalizing $\gamma'$. The first step is again the straightening out of $X$ 
to $c_{n+1}\dfrac{\partial}{\partial x_{n+1}}$ by a diffeomorphism $\Phi$ 
whose linear part is $x_i\mapsto x_i-\dfrac{c_i}{c_{n+1}}x_{n+1}$ for 
$i=1,\ldots,n,n+2,\ldots,2n$ and $x_{n+1}\mapsto x_{n+1}$.
We write again $\gamma=\gamma'+\gamma''$, where $\gamma'$ is a 1-form  on 
$\{x_{n+1}=0\}$ and $\gamma''$ has the expression $g(x_1,\ldots,x_n,x_{n+2},
\ldots,x_{2n})dx_{n+1}$ with $g(0)=1$. Since $\gamma'(O)=0$ in this case,
we use a linear transformation which maps $x_{n+1}$ to $x_{n+1}-x_{2n}$ and 
does not affect  the other $2n-1$ coordinates. Then $X$ has still the form 
$c_{n+1}\dfrac{\partial}{\partial x_{n+1}}$ after this linear transformation,
while the transformed $\gamma'$  no longer vanishes at $O$ (indeed, the 
constant part of $\gamma'$ is $dx_{2n}$). Moreover, we have 
$(d\gamma')^{n-1}(O) \neq 0$ since $(d\gamma)^n(O) \neq 0$, where
\[
 (d\gamma)^n=(d\gamma'+dg\wedge dx_{n+1})^n=
 (d\gamma')^n+n(d\gamma')^{n-1}\wedge dg\wedge dx_{n+1}=
 n(d\gamma')^{n-1}\wedge dg\wedge dx_{n+1}.
\]

We also verify that $\gamma'\wedge(d\gamma')^{n-1} (O)=0$, which implies, by 
\cite{Martinet-Formes1970}, that $\gamma'$ has the normal form 
$\gamma'=(x_2+1)dx_{n+2}+\sum_{i=3}^nx_idx_{n+i}$ by changing only the 
coordinates $x_1,\ldots,x_n,x_{n+2},\ldots,x_{2n}$. Indeed, the linear part 
of $\gamma$ is
\[
(x_1+c_1(x_{n+1}+x_{2n}))d(x_{n+1}+x_{2n})+\sum_{i=2}^n(x_i+c_i(x_{n+i}
+x_{2n}))d(x_{n+i}+c_{n+i}(x_{n+i}+x_{2n}))+dQ,
\]
where $Q$ is some quadratic function. The linear part $\gamma'^{(1)}$ of 
$\gamma'$ can be split into four parts: $\sum_{i=2}^{n-1}x_idx_{n+i}$, the 
$dx_{2n}$ component, a $1$-form divisible by $x_{2n}$, and a differential form 
$dQ'$ of some quadratic function $Q'$. Then $(d\gamma'^{(1)})^{n-1}$ is a wedge 
product of $dx_{2n}$ and some $(2n-3)$-form; therefore, its wedge product with 
the constant part $\gamma'^{(0)}=dx_{2n}$ vanishes.

Finally, we   conclude that  $\dfrac{\partial g}{\partial x_1}(O)\neq0$,
and therefore $x_1=g-g(O),x_2,\ldots,x_{2n}$ forms a coordinate system in 
which $X=c_{n+1}\dfrac{\partial}{\partial x_{n+1}}$
and $\gamma=(x_1+1)dx_{n+1}+\gamma'$.
Now use a linear transformation to arrive at the desired normalization.
\end{proof}

\subsubsection{The case $X(O)=0$}
\begin{theorem}
Let $\alpha$ be a non-vanishing singular contact form and $X$ a vector field
vanishing at $O$ which preserves $\alpha$. Then $X$ and $\alpha$ can be 
normalized simultaneously, i.e., there is a coordinate system in which 
$\alpha$ is in the normal form \eqref{eq:5.2} and $X$ commutes with its 
semisimple linear part.
\end{theorem}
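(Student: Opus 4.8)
The plan is to run the toric strategy already used for the volume and symplectic cases, after first reducing the problem from $2n+1$ to $2n$ dimensions. Starting from the Martinet normal form \eqref{eq:5.2}, I would observe that $X$ preserves $\alpha$, so by the Conservation Theorem \ref{thm:ConservationProperty} every generator $Z_k$ of the associated torus action $\rho$ of $X$ at $O$ also satisfies $\cL_{Z_k}\alpha = 0$. Applying the computation of Lemma \ref{lemma:Contact-V.F.} to each $Z_k$ (and to $X$), none of these vector fields has a $\partial/\partial\theta$-component and all are independent of $\theta$; hence $\rho$ fixes the function $\theta$ and descends to a torus action on the $x$-space $\KK^{2n}$ preserving the $1$-form $\gamma := \alpha - (\pm\theta\,d\theta) = (x_1+1)dx_{n+1} + \sum_{k=2}^n x_k\,dx_{n+k}$, because $\cL_{Z_k}(\pm\theta\,d\theta)=0$. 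Since $\omega_0 := d\gamma = \sum_{k=1}^n dx_k\wedge dx_{n+k}$ is the standard symplectic form, the whole problem becomes: normalize a vector field $X$ on $(\KK^{2n},O)$ that preserves a non-vanishing Liouville $1$-form $\gamma$ whose differential is symplectic, keeping the pair $(\omega_0,\gamma)$ canonical. Note that $X$ is then Hamiltonian, $\iota_X\omega_0 = -d(\gamma(X))$, and the $\theta$-direction can be carried along untouched at the very end.

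Next I would linearize $\rho$ and standardize the data simultaneously. The form $d\gamma=\omega_0$ has constant coefficients, so evaluating $\rho_s^*\omega_0=\omega_0$ at $O$ shows that the linear part $\rho_1$ of $\rho$ is a linear symplectic torus action; evaluating $\rho_s^*\gamma=\gamma$ at $O$ shows moreover that $\rho_1$ preserves the covector $\gamma(O)=dx_{n+1}$. By Bochner's averaging theorem \cite{Bochner-Compact1945} there is a formal (analytic in the integrable case) diffeomorphism, tangent to the identity, conjugating $\rho$ to $\rho_1$; I then apply a linear symplectic change diagonalizing $\rho_1$ while keeping $\omega_0$ standard and fixing $dx_{n+1}$. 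The decisive point is that, since $\rho_1$ fixes $dx_{n+1}$, its weight on $x_{n+1}$, and hence (by the $\omega_0$-pairing) its weight on the conjugate coordinate $x_1$, both vanish: the torus acts trivially on the ``Reeb plane'' spanned by $x_1,x_{n+1}$. A direct weight count then gives $\cL_{Z_k}\lambda_0 = 0$ for $\lambda_0 := \sum_{k=1}^n x_k\,dx_{n+k}$, so the target primitive $\gamma_0 := dx_{n+1}+\lambda_0$ is itself $\rho_1$-invariant.

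With $\gamma_0$ invariant, I would finish by the equivariant path method of Lemma \ref{lem:EquivariantPath}. After the previous step the transported primitive $\tilde\gamma$ and $\gamma_0$ are both $\rho_1$-invariant, agree at $O$, and satisfy $d\tilde\gamma(O)=d\gamma_0(O)=\omega_0$; thus the linear path $\gamma_t=(1-t)\gamma_0+t\tilde\gamma$ consists of $\rho_1$-invariant $1$-forms with symplectic differentials near $O$. The Moser equation $\cL_{Y_t}\gamma_t+(\tilde\gamma-\gamma_0)=0$ is solvable with $Y_t(O)=0$: seeking $Y_t$ among Hamiltonian vector fields of $\omega_0$ automatically keeps $\omega_0$ fixed, and the residual scalar equation is handled using that $\tilde\gamma-\gamma_0=dK$ is exact with $K$ having a critical point at $O$ (since the two primitives share both their differential and their value at $O$). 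Averaging $Y_t$ over the torus makes the isotopy $\rho_1$-equivariant, so $\rho$ stays linear. The resulting coordinates put $\gamma$ into the form $\gamma_0$; reattaching $\pm\theta\,d\theta$ recovers $\alpha$ in the normal form \eqref{eq:5.2}, while $X$, whose normalization is equivalent to the linearity of $\rho$, commutes with its semisimple linear part. In the analytically or Darboux integrable analytic case, $\rho$ is analytic and every step is analytic.

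The main obstacle is precisely the non-homogeneity of $\alpha$: unlike the folded-volume and log-symplectic structures, the Martinet form carries the constant term $dx_{n+1}$, so the linear part of $\rho$ has no a priori reason to preserve the normal form, and one cannot simply invoke homogeneity to pass to the linear action. What rescues the argument is that the singular datum splits into a homogeneous part $d\alpha=\omega_0$ and a co-orientation $\gamma(O)=dx_{n+1}$, both of which the torus action must respect; this forces the action to be trivial on the Reeb plane and thereby renders the inhomogeneous target $\gamma_0$ invariant, after which the equivariant Moser argument proceeds as in the homogeneous cases. Checking the solvability of the Moser equation and the vanishing $Y_t(O)=0$ near the fold is the one computational point that requires care.
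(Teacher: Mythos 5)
Your reduction is sound and genuinely different from the paper's argument: using Theorem \ref{thm:ConservationProperty} together with Lemma \ref{lemma:Contact-V.F.} to push the associated torus action $\rho$ down to the hyperplane $\{\theta=0\}$, linearizing it by Bochner, and then observing that the weight pairing forced by $\cL_{\rho}\omega_0=0$ and the fixed covector $dx_{n+1}$ makes the target primitive $\gamma_0=dx_{n+1}+\sum_{k=1}^n x_k\,dx_{n+k}$ invariant under the linearized action, is a correct and attractive way to set up an equivariant Moser argument. (The paper instead stays classical here: it writes $X$ as a Hamiltonian vector field of $H=\gamma(X)$, shows $H$ is independent of $x_1$ and built from monomials with $\sum_{i=2}^n\ell_i=1$, and runs the step-by-step Birkhoff normalization with generating functions $F^{(k)}$ of the same monomial type, so that each time-one flow commutes with the Liouville field $Z$ and hence preserves $\gamma$ exactly; no Moser step for $1$-forms is ever needed.)

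The gap is in your final step. The claim that $\tilde\gamma-\gamma_0=dK$ is exact is unjustified and, as stated, false: you only know $d\tilde\gamma(O)=d\gamma_0(O)=\omega_0$ \emph{at the origin}, while $d\tilde\gamma=\Phi_*\omega_0$ differs from $\omega_0$ away from $O$ (the Bochner map $\Phi$ is tangent to the identity but is not a symplectomorphism), so $\tilde\gamma-\gamma_0$ is not closed and no primitive $K$ exists. To repair this you need an intermediate equivariant Darboux step bringing $d\tilde\gamma$ to $\omega_0$ identically (possible, since both forms are $\rho_1$-invariant and agree at $O$, with an isotopy tangent to the identity), and only then does the difference of primitives become $dK$ with $K$ critical at $O$. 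Even after that, the "residual scalar equation" is not the algebraic triviality your wording suggests: writing $Y_t=X_{H_t}$, the equation $\cL_{Y_t}\gamma_t=-dK$ reduces to $H_t+Z_t(H_t)=K$, where $Z_t\intprod\omega_0=\gamma_t$ is the Liouville field with $Z_t(O)=\partial/\partial x_1\neq0$. Precisely because $\gamma_0$ is inhomogeneous (nonvanishing at $O$), this is a first-order PDE along a nonvanishing vector field --- unlike the folded-volume and log-symplectic cases, where the contraction map is invertible pointwise --- and its solvability in the formal and analytic categories, equivariantly and with $dH_t(O)=0$ (so that $Y_t(O)=0$ and the flow fixes $O$), is exactly the hard point of the theorem. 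You flag it as "the one computational point that requires care" but leave it unproved, so the proof as written is incomplete at its crux.
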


\begin{proof}
We assume that $\alpha$ has been brought into  the normal form \eqref{eq:5.2}. 
Since $\cL_X\alpha=0$, regarding $X$ as a vector field on $\{\theta=0\}$ 
by Lemma \ref{lemma:Contact-V.F.}, we interpret $X$ as a Hamiltonian vector 
field with respect to the standard symplectic form $\omega_0=d\alpha$. Thus we 
have a Hamiltonian function $H$, with $H(O)=0$, i.e, $X\intprod\omega_0=-dH$.
Write 
$$
X=\sum_{i=1}^n\left(-\dfrac{\partial H}{\partial x_{n+i}}
\dfrac{\partial}{\partial x_{i}}+\dfrac{\partial H}{\partial x_{i}}
\dfrac{\partial}{\partial x_{n+i}}\right).
$$
Since $X(O)=0$, $H(O)=0$, and
\[
d(X\intprod\alpha)-dH=d(X\intprod\alpha)+X\intprod\omega_0=\cL_X\alpha=0,
\]
we get $X\intprod\alpha-H=0$, which, in the given coordinates, takes the form
\[
( x_1+1)\dfrac{\partial H}{\partial x_{1}}+
\sum_{i=2}^n x_i\dfrac{\partial H}{\partial x_{i}}-H=0.
\]
By the Taylor expansion, this equation provides two properties of $H$:
\begin{enumerate}
\item $H$ is independent of $x_1$, which implies that $x_{n+1}$ is a first 
integral of $X$;
\item $H$ is the sum of monomial terms of the form (up to constant coefficients) 
$\prod_{i=2}^{2n} x_i^{\ell_i}$ such that $\sum_{i=2}^n \ell_i=1$.
\end{enumerate}
In particular, the quadratic part $H^{(2)}$ of $H$ is the sum of $c_{ij}x_ix_j$ 
in which $c_{ij}$ is constant, $i\in\{2,\ldots,n\}$ and $j\in\{n+1,\ldots,2n\}$. 
Then the coefficient matrix $A$ of the linear part of $X$ is of the form $
\begin{pmatrix}
A_1&0\\0&A_2
\end{pmatrix}$. Moreover, we can keep
the standard form of $\omega_0$ and put $A_1$ into Jordan normal form
by two linear transformations on the complementary $n$-dimensional coordinate 
planes $\{x_1,\ldots,x_n\}$ and  $\{x_{n+1},\ldots,x_{2n}\}$.

Now we claim that the (formal) diffeomorphism $Id_\theta\oplus\Phi$ puts $X$ 
into Poincar\'e-Dulac normal form without changing $\alpha$, where 
$Id_\theta$ is the identity map for the coordinate $\theta$ and $\Phi$ is 
the classical symplectic (with respect to  $\omega_0$) diffeomorphism 
(constructed step by step below) on $\{\theta=0\}$ normalizing $H$ into the 
Poincar\'e-Birkhoff normal form. When $H$ is in the Poincar\'e-Birkhoff 
normal form, $X$ is also automatically in the Poincar\'e-Dulac normal form.

We only need to prove that $\Phi$ preserves the $1$-form 
$\beta:=\alpha-\theta d\theta$ on $\{\theta=0\}$. Let us verify 
this assertion by showing that $\Phi$ preserves the associated vector field 
$Z=(1+x_1)\dfrac{\partial}{\partial x_1}+
\sum_{i=2}^n x_i\dfrac{\partial}{\partial x_i}$ of $\beta$ defined by 
$Z\intprod\omega_0=\beta$. Indeed,  decomposing the quadratic part $H^{(2)}$
of $H$ into the sum of the semisimple part $H^s$ and the nilpotent part $H^n$,
we see that every resonant term  of $H$ lies in the subspace of the kernel 
of $X_{H^{s}}$. At step $k \geq 3$, we assume that all the non-resonant 
terms of $H$ of degree smaller than $k$ have been eliminated. In order 
to eliminate the non-resonant terms of degree $k$, one finds a homogeneous 
function $F^{(k)}$ of degree $k$ such that the image of $F^{(k)}$  under 
$X_{H^{(2)}}$ coincides with these non-resonant terms in $H^{(k)}$, i.e., 
$H^{(k)}-X_{H^{(2)}}F^{(k)}$ contains only resonant terms. Then the 
time-$1$ flow of $X_{F^{(k)}}$ is a symplectic diffeomorphism $\Phi^{(k)}$ 
which changes the $k$-th order terms of $H$ into normal form without changing 
terms of lower degrees. Constructing inductively, from $k=3$ to infinity, 
we get a formal symplectic diffeomorphism $\Phi$ (equal to the formal 
infinite composition of the above $\Phi^{(k)}$), which puts $H$ into   
Poincar\'e-Birkhoff normal form.

Thanks to the properties of $H$, and the above property of $H^{(2)}$ in 
particular, $F^{(k)}$ can be chosen such that every monomial term 
$\prod_{i=1}^{2n} x_i^{\ell_i}$ also satisfies $\ell_1=0$ and 
$\sum_{i=2}^n \ell_i=1$. Moreover, this construction guarantees that the 
higher order terms of $H$ in the new coordinate system after each 
transformation $\Phi^{(k)}$ have the same properties. Then $X_{F^{(k)}}$ 
acquires the form
\[
X_{F^{(k)}}=\sum_{i=1}^n\left(-\dfrac{\partial F^{(k)}}{\partial x_{n+i}}
(x_2,\ldots,x_{2n})\dfrac{\partial}{\partial x_{i}}+
\dfrac{\partial F^{(k)}}{\partial x_{i}}(x_{n+1},\ldots,x_{2n})
\dfrac{\partial}{\partial x_{n+i}}\right).
\]
Since $X_{F^{(k)}}$ commutes with the Euler vector field 
$E=\sum_{i=1}^n x_i\dfrac{\partial}{\partial x_i}$ and commutes with 
$\dfrac{\partial}{\partial x_1}$, we conclude that it commutes with the 
associated vector field $Z$ of $\beta$  and, furthermore, it preserves $\beta$. 
Consequently, $\Phi$ also preserves $\beta$.
\end{proof}

\subsection{Two types of non-degenerate singular contact forms}
We now consider singular contact forms which vanish at $O$, with a generic 
non-degeneracy condition.

If $O$ is a non-degenerate singularity of the singular contact form $\alpha$  
then it is isolated, by definition. Moreover, $O$ must be an equilibrium point 
of the vector field which preserves the singular contact structure.

Let us present normal forms of singular contact forms in  the transversal case 
and the generic tangential case obtained in \cite{JiangMinhZung-Contact2018}.

\begin{theorem} \label{thm:prenormalization}
Let $O$ be a non-degenerate singularity of a smooth (resp., real or complex 
analytic) singular contact 1-form $\alpha$ on a manifold
of dimension $2n+1$. Then there is a local smooth (resp., analytic)
coordinate system $(\theta, x_1, \hdots, x_{2n})$ in which
$\alpha$ has the expression
\[
\alpha = \theta d \theta + \gamma
\]
in the transversal case, or the expression
\[
\alpha = d(\theta^3 - x_1\theta) + \gamma
\]
in the tangential case with a generic tangency,
where $\gamma = \sum_{i=1}^{2n} g_i d x_i$
is a 1-form which is basic with respect to  $\dfrac{\partial}{\partial\theta}$, 
i.e., the functions $g_i$ do not depend on
$\theta$, and such that
\[
d\gamma = \sum_{i=1}^n d x_i \wedge d x_{i+n}
\]
is a symplectic form in $2n$ variables (which can be put into Darboux canonical 
form).

The primitive form $\gamma$ can be (formally, or analytically under some Diophantine conditions, or smoothly under hyperbolicity conditions) normalized 
(over $\mathbb{C}$) in the subspace $(x_1,\ldots,x_{2n})$ as follows: there 
are positive integers $n_1=1<n_2<\cdots<n_{k}<n_{k+1}=n+1$, and eigenvalues 
$\lambda_1, \hdots, \lambda_{2n}$ of $X$, such that $\lambda_i + 
\lambda_{n+i} = 1$ for all $i =1,\hdots, n$,
$\lambda_i = \lambda_j$ for any $n_s \leq i < j < n_{s+1}$ ($s=1,\hdots, k)$, 
and
\begin{equation}\label{eq:PrimitiveForm}
\gamma=\sum_{i=1}^k (\gamma_i +dQ_i)+dR;
\end{equation}
where:

i) $\gamma_i$ has the form
\[
\gamma_i=\sum_{j=n_i}^{n_{i+1}-1}\left(\lambda_{n_i} x_{j}dx_{n+j}
+(\lambda_{n_i}-1) x_{n+j}dx_{j}\right).
\]

ii) The functions $Q_i$ are as follows:
\begin{itemize}
\item{} If $\lambda_i\neq\frac{1}{2} $ then $Q_i=0$ or $Q_i=
\sum_{j=n_i}^{n_{i+1}-2}x_{j+1}x_{n+j}$;
\item{} If $\lambda_i=\frac{1}{2} $ then $Q_i$ belongs to one of the following four cases:
	\begin{itemize}
    \item{} $Q_i=0$;
	\item{} $Q_i=x_{n_i+n}^2$ with $n_{i+1}=n_i+1$ in this case;
	\item{} $Q_i=2\sum_{j=n_i}^{n_{i+1}-2}x_jx_{n+j+1}+(-1)^{n_{i+1}-n_i}
	x_{n_{i+1}-1}^2$;
	\item{} $Q_i=2\sum_{j=n_i}^{n_{i+1}-2}x_jx_{n+j+1}$ with $n_{i+1}-n_i\geq 3$ 
	and is an odd number.
	\end{itemize}
\end{itemize}

iii) $R$ is a function of $x_1,\ldots,x_{2n}$ whose infinite jet at $0$ contains 
only the monomial terms  $\prod x_i^{\alpha_i}$ satisfying the resonance relation
\begin{equation}
\label{eq:ResonantRelationPrimitiveForm}
\sum_{i=1}^{2n}\alpha_i \lambda_i = 1 \quad \text{with}\quad
\sum_{i=1}^{2n}\alpha_i\geq 3.
\end{equation}
\end{theorem}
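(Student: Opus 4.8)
The plan is to decouple the normalization into two essentially independent tasks: first bring the presymplectic part $d\alpha$ together with its characteristic foliation into a model, and only then normalize the leftover primitive $\gamma$. Since $(d\alpha)^n(O)\neq 0$, the closed $2$-form $d\alpha$ has constant rank $2n$ near $O$, so its kernel is a one-dimensional integrable line field; I would straighten this line field to $\partial/\partial\theta$ and simultaneously invoke the relative Darboux (Moser) theorem for forms of constant rank, so that in suitable coordinates $(\theta,x_1,\dots,x_{2n})$ one has $d\alpha=\sum_{i=1}^n dx_i\wedge dx_{i+n}=:\omega$, a form not involving $\theta$. Writing $\lambda:=\iota_{\partial/\partial\theta}\alpha$, the identity $\alpha\wedge(d\alpha)^n=\mathrm{const}\cdot\lambda\, d\theta\wedge(d\alpha)^n$ shows that the singular set $S$ is exactly $\{\lambda=0\}$, and the non-degeneracy $df(O)\neq 0$ becomes $d\lambda(O)\neq 0$. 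The dichotomy between the two normal forms is then the dichotomy between the characteristic direction $\partial/\partial\theta$ being transversal to $S$, i.e. $\partial\lambda/\partial\theta(O)\neq 0$, or tangent to it; the generic tangency means a Morse-type contact, $\partial^2\lambda/\partial\theta^2(O)\neq 0$ after the obvious linear correction. Indeed, the two target models are consistent with this: for $\alpha=\theta\, d\theta+\gamma$ with $\gamma$ basic one has $\lambda=\theta$ (transversal), while for $\alpha=d(\theta^3-x_1\theta)+\gamma$ one has $\lambda=3\theta^2-x_1$ (tangent, with cubic Morse contact along the leaf).

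Next I would produce the coarse model in each case by a deformation argument exactly in the spirit of the path method recalled in Subsection \ref{subsection:PathMethod}. In the transversal case $\lambda$ serves as the $\theta$-coordinate: after a fiber-preserving adjustment and a Moser path that keeps $\omega=d\alpha$ fixed, one arranges $\alpha=\theta\, d\theta+\gamma$, where $\gamma=\sum_i g_i\,dx_i$ is basic (the $g_i$ independent of $\theta$) and $d\gamma=\omega$. In the tangential case the analogous procedure, using the cubic Morse model for $\lambda$ restricted to the characteristic leaves, yields $\alpha=d(\theta^3-x_1\theta)+\gamma$ with $\gamma$ again basic and $d\gamma$ symplectic. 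The only care needed is to keep the correction vector field vanishing at $O$, so that its flow fixes the origin; this is what makes the transversal/tangential reduction a routine, if somewhat technical, consequence of Martinet's classification together with the relative Darboux step.

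The heart of the statement is the fine normalization of the basic primitive $\gamma$ in the symplectic space $(x_1,\dots,x_{2n})$. Here I would pass from $\gamma$ to its intrinsically associated vector field $Z$ defined by $\iota_Z\,d\gamma=\gamma$. Because $d\gamma=\omega$ is closed and symplectic, $\mathcal{L}_Z\,\omega=d\iota_Z\omega=d\gamma=\omega$, so $Z$ is a conformally symplectic (Liouville) vector field of weight $1$; its eigenvalues $\lambda_i$ therefore pair up as $\lambda_i+\lambda_{n+i}=1$, which is precisely the relation in the statement, and $Z$ is the vector field whose eigenvalues are the $\lambda_i$. Normalizing $\gamma$ is then equivalent to bringing $Z$ into Poincaré--Dulac normal form by $\omega$-preserving (symplectic) transformations, and the grouping $n_1<\dots<n_{k+1}$ records the partition of the eigenvalues into equal blocks. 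The semisimple part of each block contributes the explicit quadratic primitives $\gamma_i$, the nilpotent Jordan part of $Z$ inside a block contributes the correction terms $Q_i$ subject to the constraint $\mathcal{L}_Z\omega=\omega$, and every resonant monomial that survives — exactly those with $\sum_i\alpha_i\lambda_i=1$ and total degree $\geq 3$ — is absorbed into the resonant primitive $dR$. The formal assertion then follows by the standard step-by-step elimination of non-resonant terms; the Diophantine, hyperbolicity, or purely formal hypotheses govern only the convergence of this inductive process, not its algebraic form.

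I expect the genuine obstacle to be the exhaustive classification of the nilpotent contributions $Q_i$, especially at the resonant value $\lambda_i=\tfrac12$, where the pairing $\lambda_i+\lambda_{n+i}=1$ forces a block to pair with itself. In that situation the symplectic constraint couples the Jordan structure on the $x_j$-variables to that on the $x_{n+j}$-variables, and enumerating the admissible canonical forms — the four cases listed for $\lambda_i=\tfrac12$, including the parity requirement that $n_{i+1}-n_i$ be odd — amounts to a careful analysis of nilpotent endomorphisms that are skew-self-adjoint with respect to the symplectic pairing, together with a verification that the higher-order resonant remainder can always be pushed into $dR$. The transversal-versus-tangential reduction and the block decomposition are comparatively routine once the relative Darboux normalization of $d\alpha$ is in place; it is this symplectic Jordan bookkeeping, and the simultaneous control of the primitive structure so that $\gamma_i+dQ_i$ is genuinely a primitive of the normalized $\omega$, that carries the real content of the theorem.
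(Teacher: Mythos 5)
First, a point of comparison: the paper does not actually prove this theorem. It is imported wholesale from \cite{JiangMinhZung-Contact2018}, and the only indication of method the paper gives is the remark following the statement that the normalization of $\gamma$ is obtained there by reducing it to the Poincar\'e--Dulac normalization of the associated vector field $Z$ defined by $Z \intprod \omega_0 = \gamma$. Your proposal follows exactly this route, and its skeleton is sound: the identification $S=\{\lambda=0\}$ with $\lambda=\iota_{\partial/\partial\theta}\alpha$ and the transversal/tangential dichotomy read off from $\lambda$; the passage to the Liouville field $Z$ with $\mathcal{L}_Z\omega=\omega$; the eigenvalue pairing $\lambda_i+\lambda_{n+i}=1$; the splitting of the linear part of $Z$ into a semisimple piece (giving the $\gamma_i$) and a Hamiltonian nilpotent piece (giving the $dQ_i$); and the fact that the resonant higher-order part of $Z$ is Hamiltonian, hence contributes an exact term $dR$ whose monomials satisfy $\sum_i\alpha_i\lambda_i=1$. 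All of this is correct and coincides with the reduction the paper attributes to the cited reference.

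However, as a proof the proposal has a genuine gap exactly where the theorem carries its real content: part (ii). You explicitly defer ``the exhaustive classification of the nilpotent contributions $Q_i$'' to ``a careful analysis of nilpotent endomorphisms'' compatible with the symplectic pairing, but this classification is the theorem: it is what produces the single non-trivial option $\sum x_{j+1}x_{n+j}$ when $\lambda_i\neq\tfrac12$, and the list of four cases when $\lambda_i=\tfrac12$, including the sign $(-1)^{n_{i+1}-n_i}$ and the parity constraint that $n_{i+1}-n_i$ be odd in the last case. These constraints come from the structure theory of nilpotent orbits in $\mathfrak{sp}$ (odd-size Jordan blocks must pair up, even-size blocks carry a sign invariant), and naming the theory is not the same as executing it; without that execution the stated normal form list is not established. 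Two smaller gaps: (a) your claim that Poincar\'e--Dulac normalization of $Z$ can be achieved by $\omega$-preserving changes of coordinates needs the verification that the homological operator $[Z^{(1)},\cdot\,]$ preserves Hamiltonian vector fields and is solvable there (it does, since $\iota_{[Z^{(1)},V]}\omega = d\bigl(Z^{(1)}(H)-H\bigr)$ when $\iota_V\omega=dH$, reducing the equation to functions with the shifted eigenvalue $\sum_i\alpha_i\lambda_i-1$), but you assert rather than prove it; (b) the tangential coarse reduction to $d(\theta^3-x_1\theta)+\gamma$ with $\gamma$ basic requires a parametrized (versal-unfolding) normalization of the leafwise primitive along the characteristic foliation, which you describe in one sentence. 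So: right strategy, same as the source the paper cites, but the hardest component is acknowledged rather than supplied.
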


Note that we reduce the normalization of $\gamma$ to the normalization of its 
associated vector field $Z$ defined by $Z\intprod\omega_0=\gamma$ in 
\cite{JiangMinhZung-Contact2018}. That is, supposing that $\gamma$
is already in normal form in the coordinate system $(\theta,x_1,\ldots,x_{2n})$, 
then  its associated vector field  $Z$ defined by  $Z\intprod \omega_0=\gamma$
is in Poincar\'e-Dulac normal form, where $\omega_0=d\gamma$ is
the canonical symplectic form on $\{\theta=0\}$.

We would like to mention a special example as a direct application of this 
reduction, which shows the existence of the simultaneous normalization of the 
dynamical system and the singular contact distribution (not only the form 
itself).
\begin{example}
The vector field
$Z_1=\dfrac{1}{2}\theta\dfrac{\partial}{\partial\theta}+Z$
and its semisimple part $Z_1^{s}=\dfrac{1}{2}\theta
\dfrac{\partial}{\partial\theta}+Z^{s}$
preserve the transversal singular structure.
In fact, $Z$ is invariant under its flow $\phi_Z^t(x)$, and we have 
$(\phi_Z^t)^*\omega=e^t\omega$ since $\cL_Z\omega=\omega$. Therefore, we 
have $(\phi_Z^t)^*\gamma=Z\intprod e^t\omega=e^t\gamma$.
 As the flow $\phi_{Z_1}^t(\theta,x)=(\theta e^{\frac{1}{2}t},\phi_Z^t(x))$ 
 and the transversal singular contact form $\alpha=\theta d\theta+\gamma$, 
 we have
 \[
 (\phi_{Z_1}^t)^*\alpha=(e^t\theta d\theta+e^t\gamma)=e^t\alpha.
 \]
 Similar computations are also true for the semisimple vector field $Z_1^{s}$.
\end{example}

\subsubsection{The transversal case}

Recall that a singular contact form
$\alpha$ in dimension $2n+1$ is called \textbf{\textit{transversal}} 
if the kernel of 
$d\alpha$ is transversal to the hypersurface
$S= \{x\ |\ \alpha \wedge (d\alpha)^n (x)= 0\}$ of singular points of $\alpha$.
This transversality condition is generic. In this subsubsection, we   study 
local dynamical systems which preserve this type of singular contact forms.

\begin{theorem}\label{thm:NFTransversalContact}
Suppose a transversal singular contact form $\alpha$ is preserved by a vector 
field $X$ which vanishes at $O$. Then $\alpha$ and $X$ can be normalized 
simultaneously, i.e., $\alpha$ is in the form \eqref{eq:PrimitiveForm} as 
in Theorem \ref{thm:prenormalization}, and $X$ is in Poincar\'e-Dulac normal 
form having $\theta$ as a first integral. Moreover, the quadratic function 
$Q = \sum Q_i$ and the higher order function $R$ in \eqref{eq:PrimitiveForm} 
are first integrals of the semisimple part of $X$.
\end{theorem}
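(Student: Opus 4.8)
The plan is to transfer everything to the symplectic slice $\{\theta=0\}$ and reduce the problem to the simultaneous normalization of two commuting vector fields. First I would prove the exact analogue of Lemma \ref{lemma:Contact-V.F.} for the normal form $\alpha=\theta\,d\theta+\gamma$ of Theorem \ref{thm:prenormalization}: since $\mathcal{L}_X\alpha=0$, $X$ preserves $d\alpha=d\gamma$ and its kernel $\mathbb{K}\,\partial/\partial\theta$, so $[\partial/\partial\theta,X]$ lies in that kernel, which forces the $\partial/\partial x_i$-components of $X$ to be independent of $\theta$; inspecting the $d\theta$-component of $\mathcal{L}_X\alpha=0$ then forces the $\partial/\partial\theta$-component to vanish. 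Hence $\theta$ is automatically a first integral of $X$, and $X$ descends to a vector field on $(\{\theta=0\},\omega_0)$ with $\omega_0:=d\gamma$ symplectic and $\mathcal{L}_X\gamma=0$. Writing $X\intprod\omega_0=-dH$ and using $d(X\intprod\alpha)-dH=\mathcal{L}_X\alpha=0$ together with $X(O)=0$, I get $H=X\intprod\alpha=\gamma(X)$ with $H(O)=0$, so $X=X_H$ is Hamiltonian.

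Next I would record the two identities governing the interaction of $X$ with the associated vector field $Z$ of $\gamma$, defined by $Z\intprod\omega_0=\gamma$ (so $\mathcal{L}_Z\omega_0=\omega_0$, and $Z$ is in Poincaré--Dulac form precisely when $\gamma$ has the form \eqref{eq:PrimitiveForm}). Pairing with $\omega_0$ gives $H=\gamma(X)=\omega_0(Z,X)=Z(H)$, the key relation $Z(H)=H$; and $[Z,X_H]\intprod\omega_0=-d(Z(H))+dH=0$ shows $[Z,X]=0$. Thus $X$ and $Z$ are commuting vector fields vanishing at $O$, with $\mathcal{L}_X\omega_0=0$ and $\mathcal{L}_Z\omega_0=\omega_0$; consequently $\mathcal{L}_{Z^s}\omega_0=\omega_0$ and $\mathcal{L}_{Z^s}\gamma=\gamma$, so $\gamma$ is quasi-homogeneous of weight $1$ for the diagonal field $Z^s$. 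A short weight-by-weight argument — using that $Z-Z^s$ consists of terms commuting with $Z^s$ (hence preserving its weight), of which only the nilpotent part $Z^n$ preserves polynomial degree — upgrades $Z(H)=H$ to $Z^s(H)=H$; so every monomial of $H$ satisfies the resonance $\sum_i\alpha_i\lambda_i=1$ of \eqref{eq:PrimitiveForm}.

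The heart of the proof is to put $X=X_H$ in Poincaré--Dulac form while keeping $\gamma$ in the form \eqref{eq:PrimitiveForm}. Following the non-vanishing case, I would eliminate the non-resonant part of $H$ degree by degree by the time-$1$ flows of Hamiltonian fields $X_{F^{(k)}}$, solving the homological equation $X_{H^{(2)}}(F^{(k)})=P^{(k)}$. Because $[X,Z]=0$ forces the linear parts to commute, $X^{(1)}$ commutes with $Z^s$, so the operator $X_{H^{(2)}}$ preserves the $Z^s$-weight grading; since $H$, and therefore each $P^{(k)}$, has weight $1$, the generating functions can be taken of $Z^s$-weight $1$. Such a flow changes the primitive by $\mathcal{L}_{X_{F^{(k)}}}\gamma=d\big(Z(F^{(k)})-F^{(k)}\big)$, which vanishes to leading order but not exactly, because the higher resonant part $dR$ of $\gamma$ makes $Z$ genuinely nonlinear — this discrepancy, absent in the non-vanishing case where $Z$ is affine, is what I expect to be the main obstacle. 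I would dispose of it by the toric method of the paper rather than by bare-hands iteration: the associated torus action $\rho$ of $X$ preserves $\gamma$ by the Conservation Theorem \ref{thm:ConservationProperty} and commutes with $Z$ and $Z^s$, so linearizing $\rho$ (Bochner) normalizes $X$, while the equivariant Moser path method (Lemma \ref{lem:EquivariantPath}) — run with the quasi-homogeneity $\mathcal{L}_{Z^s}\gamma=\gamma$ in the role that ordinary homogeneity played for volume forms — carries the perturbed primitive back to \eqref{eq:PrimitiveForm} without moving the linearized action. When $X$ is analytically or Darboux integrable this $\rho$ is analytic, so the whole construction stays analytic.

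Finally, the ``moreover'' follows by a direct computation in the normalized coordinates. The semisimple part $X^s$ commutes with $X$ and $Z$ by functoriality of the Jordan decomposition and satisfies $\mathcal{L}_{X^s}\omega_0=0$, whence $\mathcal{L}_{X^s}\gamma=[X^s,Z]\intprod\omega_0+Z\intprod\mathcal{L}_{X^s}\omega_0=0$. Since $X^s$ is a diagonal Hamiltonian field its eigenvalues pair as $\mu_j+\mu_{n+j}=0$, which makes each diagonal block $\gamma_i$ annihilated by $\mathcal{L}_{X^s}$; what remains of $\mathcal{L}_{X^s}\gamma=0$ is then $d\big(X^s(Q)\big)+d\big(X^s(R)\big)=0$, and comparing homogeneous degrees (with $Q=\sum Q_i$ quadratic and $R$ of order $\geq 3$) gives $X^s(Q)=0$ and $X^s(R)=0$, as claimed.
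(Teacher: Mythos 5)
Your preparatory material and your closing computation are essentially sound: the analogue of Lemma \ref{lemma:Contact-V.F.} does hold with the same proof, the identities $Z(H)=H$, $[Z,X]=0$ and $\cL_{X_{F}}\gamma=d\bigl(Z(F)-F\bigr)$ are correct, your weight-by-weight upgrade of $Z(H)=H$ to $Z^s(H)=H$ is a valid special case of Theorem \ref{thm:ConservationProperty}, and the ``moreover'' computation works (you do not even need $X^s$ diagonal: $\cL_{X^s}\gamma^s=[X^s,Z^s]\intprod\omega_0+Z^s\intprod\cL_{X^s}\omega_0=0$ already kills the block terms). You have also put your finger on exactly the right difficulty: the Birkhoff generating flows move the primitive by $d\bigl(Z(F^{(k)})-F^{(k)}\bigr)\neq 0$. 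The genuine gap is in the step you propose to overcome this, namely that after Bochner-linearizing the torus action $\rho$ of $X$, ``the equivariant Moser path method carries the perturbed primitive back to \eqref{eq:PrimitiveForm}.'' This fails on two counts. First, the hypothesis of Lemma \ref{lem:EquivariantPath} is not available: for the canonical volume and symplectic models, homogeneity is what converts $\rho$-invariance into invariance under the linear part $\rho_1$, so that both endpoints of the path are invariant under the \emph{same} linear action; but $\gamma=\gamma^{(1)}+dR$ is inhomogeneous, and the quasi-homogeneity $\cL_{Z^s}\gamma=\gamma$ (an eigenvalue statement for a single diagonal field) does not give $\rho_1(s)^*\gamma=\gamma$ --- taking linear parts of $\rho(s)^*\gamma=\gamma$ only yields $\rho_1(s)^*\gamma^{(1)}=\gamma^{(1)}$.

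Second, and more fundamentally, no Moser-type argument can return a perturbed primitive to the \emph{fixed} form \eqref{eq:PrimitiveForm}: the Moser equation for a $1$-form, $Y_t\intprod d\gamma_t+d\bigl(\gamma_t(Y_t)\bigr)=-\tfrac{d}{dt}\gamma_t$, is not algebraically solvable the way it is for volume and symplectic forms, and \eqref{eq:PrimitiveForm} has functional moduli (the resonant terms $Q$ and $R$), so ``being in that form'' is not rigidity of a model; by the reduction of Theorem \ref{thm:prenormalization} it is precisely the statement that the associated vector field $Z'$, defined by $Z'\intprod d\gamma'=\gamma'$, is in Poincar\'e-Dulac normal form with diagonal semisimple part. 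Hence restoring \eqref{eq:PrimitiveForm} after your linearization of $\rho$ is again a vector-field normalization problem for $Z'=\Phi_*Z$ (to be done $\rho_1$-equivariantly and compatibly with the symplectic form), which your proposal never addresses. The paper's proof avoids this circle by reversing the order of operations: since $[X,Z]=0$, it first puts the \emph{pair} $(X,Z)$ simultaneously into Poincar\'e-Dulac form; only then does it run Moser, and only on the $2$-form $\omega=d\gamma$, through the algebraically solvable equation $Y_t\intprod\omega_t=-\zeta$ with the special primitive $\zeta=Z^s\intprod(\omega-\omega^{(0)})$; the identities $\cL_{Z^s}\omega=\omega$ and $\cL_{X^s}\omega=0$ (both from Theorem \ref{thm:ConservationProperty}) then guarantee that the Moser flow commutes with $Z^s$ and $X^s$, so both vector fields stay normalized, and a final linear map diagonalizing $Z^s$ and normalizing $\omega^{(0)}$ makes $\gamma=Z\intprod\omega_0$ land in \eqref{eq:PrimitiveForm}. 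If you insist on linearizing $\rho$ first, you must replace your Moser step by exactly such an equivariant symplectic re-normalization of $Z$ --- at which point you have reproduced the paper's argument in a different order.
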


\begin{proof}
Write $X=f_0\dfrac{\partial}{\partial\theta}+\sum_{i=1}^{2n}f_i
\dfrac{\partial}{\partial x_i}$. Then Lemma \ref{lemma:Contact-V.F.} also 
holds for non-degenerate singular contact forms with identical proof. 
Thus $\theta$ is a first integral of $X$,  $\cL_X\gamma=0$, and 
$\omega = d \gamma$ is a symplectic form on $\{ \theta = 0\}$ (see 
Theorem \ref{thm:prenormalization}).

Since $X$ is independent of $\theta$,
it can be projected to the hyperplane $\{\theta=0\}$. We will show that 
$X$ commutes with $Z$ defined by $Z\intprod\omega=\gamma$. Indeed, we have
\[
  [Z,X]\intprod\omega=Z\intprod\cL_{X}\omega-\cL_{X}Z\intprod\omega=
  -\cL_{X}\gamma=0,
\]
which implies that $[Z,X] = 0.$
Therefore, there exists a new coordinate system in which $Z$ and $X$ are both 
in Poincar\'e-Dulac normal forms. Then we use Moser's path method to normalize 
$\omega$ to its constant part $\omega^{(0)}$.
Moreover, we can choose a path such that this normalization preserves the 
semisimple parts of $Z$ and $X$. In other words, $Z$ and $X$ remain in   
Poincar\'e-Dulac normal form.

Indeed, take $\zeta={Z^s}\intprod(\omega-\omega^{(0)})$, where $Z^s$
is the semi-simple part of $Z$, and  $\omega_t=\omega^{(0)}+
t(\omega-\omega^{(0)})$. Then the path  of diffeomorphisms, from time 
$0$ to time $1$,  given by the flow of the time dependent vector field 
$Y_t$,  defined by $Y_t\intprod\omega_t=-\zeta$,  satisfies our 
requirements, because
$$
d Y_t\intprod\omega_t=-d\zeta=-\cL_{Z^s}(\omega-\omega^{(0)})
=-(\omega-\omega^{(0)})=-\dfrac{d}{dt}\omega_t.
$$
The third equality above is obtained in the following way. Since 
$\omega = d \gamma = d(Z\intprod \omega) = \cL_Z \omega$, it follows 
that $\cL_{Z^s} \omega = \omega$ by Theorem \ref{thm:ConservationProperty}.

It is shown in \cite[Lemma 2.2]{JiangMinhZung-Contact2018} that $Y_t$ 
commutes with $Z^s$. We now show that $Y_t$ also commutes with $X^s$.
Since $X$ preserves $\omega$, we have $\cL_{X^s}\omega=0$ by the fundamental 
conservation property (see Theorem \ref{thm:ConservationProperty}). Clearly, 
we also have $\cL_{X^s}\omega^{(0)}=0$ and $\cL_{X^s}\omega_t=0$.
Then the following two equations show that $Y_t$ also commutes with $X^s$:
\[
 \begin{aligned}
 &[Y_t,X^s]\intprod\omega_t=Y_t\intprod\cL_{X^s}\omega_t-
 \cL_{X^s}(Y_t\intprod\omega_t)=0-\cL_{X^s}(-\zeta)=\cL_{X^s}\zeta\\
 &0=[Z^s,X^s]\intprod(\omega-\omega^{(0)})=
 Z^s\intprod\cL_{X^s}(\omega-\omega^{(0)})-
 \cL_{X^s}(Z^s\intprod(\omega-\omega^{(0)}))=-\cL_{X^s}\zeta
 \end{aligned}
\]

Now we use a linear transformation to map $\omega^{(0)}$ into the canonical 
form $\omega_0$ and put, simultaneously, $Z^s$ in diagonal form, as in 
\cite{JiangMinhZung-Contact2018}. Then $\gamma$ is also in normal form. 
The linear transformation does not destroy the Poincar\'e-Dulac normal form 
of $X$. (Another way to look at it is to take the associated torus action
$\rho$ of the family of commuting vector fields $X$ and $Z$, which contains 
both the torus action associated to $X$ and the torus action associated to 
$Z$. A linearization of $\rho$ is a simultaneous normalization of $X$ and 
$Z$. Then take an equivariant normalization of $\omega$ with respect to $\rho$.)

The rest of the proof   is an application of Theorem 
\ref{thm:ConservationProperty}; here we give a direct proof.

We can now assume that everything is in normal form and, in particular,
$\omega = \omega_0$ is in canonical form.
Since $X$ commutes with the semisimple part $Z^{s}$ of $Z$, we have
\[
  \cL_{X}({Z^{s}}\intprod\gamma)=
  {Z^{s}}\intprod\cL_{X}\gamma-{[Z^{s},X]}\intprod\gamma=0,
\]
that is, ${Z^{s}}\intprod\gamma$ is a first integral of $X$. It is also a 
first integral of the semisimple linear vector field $X^{s}$ since $X$ is 
in Poincar\'e-Dulac normal form.

Recall that $Z \intprod \gamma = Z \intprod (Z \intprod \omega) = 0$, which
implies that $\cL_Z \gamma = Z \intprod d\gamma = Z \intprod \omega = \gamma$,
which, in turn, implies that $\cL_{Z^s} \gamma = \gamma$.
Recall also that $\gamma$ is the sum of its linear part $\gamma^{(1)}$ and
an exact differential form $dR$, where the  $2$-jet of the function $R$
vanishes at $O$. From $\cL_{Z^s} \gamma = \gamma$
we get $Z^{s}(R)=R$.

Since ${Z^{s}}\intprod\gamma = {Z^{s}}\intprod\gamma^{(1)} + Z^{s} \intprod R =
{Z^{s}}\intprod\gamma^{(1)}  + R$  is a first integral of
$X^{s}$, it follows that both its  quadratic part ${Z^{s}}\intprod\gamma^{(1)}$ 
and its higher order part $R$ are first integrals of $X^s$.  If we abuse the 
notation and define $\gamma^{s}:={Z^{s}}\intprod\omega_0$, then $\gamma^{(1)}
=\gamma^{s}+dQ$. Since ${Z^{s}}\intprod\gamma^{s}=
{Z^{s}}\intprod({Z^{s}}\intprod\omega_0)=0$, it follows that 
$\cL_{Z^s} \gamma^s = \gamma^s$, hence $\cL_{Z^s} dQ = dQ$.
We conclude that ${Z^{s}}\intprod dQ= \cL_{Z^s} Q = Q$ is also a first integral 
of $X^{s}$.
\end{proof}

\subsubsection{The tangential case}
We now assume that $\alpha$ is a tangential non-degenerate singular contact 
form, i.e., the kernel of $d\alpha$ is tangent to the hypersurface
$S = \{x\ |\ \alpha \wedge (d \alpha)^n(x) =0\}$ of singular points.
We recall that the normal form of $\alpha$ in the generic tangential case is 
$\alpha=d(\theta^3-\theta h(x))+\gamma$ with a non-degenerate function $h$ 
and normalized primitive $1$-form $\gamma$
(see Theorem \ref{thm:prenormalization}). We have a similar theorem as in 
the transversal case.

\begin{theorem}
\label{thm:NFTangentContact}
Let $\alpha=d(\theta^m-x_1\theta)+\gamma$ be a \textit{tangential} singular 
contact form  preserved by a vector field $X$ having $O$ as its equilibrium 
point, where $\gamma$ is a primitive $1$-form independent of $\theta$ and $m>2$ 
is an integer. Then $\alpha$ and $X$ can be normalized simultaneously, i.e., 
$\alpha=d(\theta^m-f(x)\theta)+\gamma$ with $\gamma$ in the normal form given 
in Theorem \ref{thm:prenormalization}  and $X$ in  
Poincar\'e-Dulac normal form having $\theta$ and $f(x)$ as its first integrals. 
Moreover, the quadratic function $Q = \sum Q_i$ and higher order function $R$ 
in \eqref{eq:PrimitiveForm} are first integrals of the semisimple part of $X$.

In particular, if $\alpha=d (\theta^3-x_1\theta)+\gamma$ is generically tangent, 
i.e., the kernel of $d\alpha$ has second order tangency with the  singular 
hypersurface,  then $\alpha$ and $X$ can be normalized simultaneously.
\end{theorem}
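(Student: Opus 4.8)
The plan is to follow the strategy of the transversal case (Theorem \ref{thm:NFTransversalContact}), the only genuinely new ingredient being a structural lemma for vector fields preserving the tangential form. Write $\beta=\theta^m-x_1\theta$, so $\alpha=d\beta+\gamma$ and $d\alpha=d\gamma=\omega$, where $\omega$ is symplectic in $x_1,\dots,x_{2n}$ (Theorem \ref{thm:prenormalization}). Writing $X=f_0\dfrac{\partial}{\partial\theta}+\sum_{i=1}^{2n}f_i\dfrac{\partial}{\partial x_i}$, I would first argue, as in Lemma \ref{lemma:Contact-V.F.}, that $X$ preserves $\ker\omega=\KK\dfrac{\partial}{\partial\theta}$, so that $f_1,\dots,f_{2n}$ are independent of $\theta$. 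From $\cL_X\alpha=d(X\beta)+\cL_X\gamma=0$, and since $\cL_X\gamma$ is then $\theta$-independent and carries no $d\theta$-component, the $d\theta$-component forces $X\beta=(m\theta^{m-1}-x_1)f_0-\theta f_1$ to be independent of $\theta$.

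The key step, and the main point where the tangential case departs from the transversal one, is to deduce from this that $f_0$ and $f_1$ both vanish. Expanding $f_0=\sum_{k\ge0}a_k(x)\theta^k$ and matching powers of $\theta$ in $(m\theta^{m-1}-x_1)f_0=W(x)+\theta f_1$ gives $a_2=\dots=a_{m-2}=0$, $f_1=-x_1a_1$, $W=-x_1a_0$, together with the recursion $x_1a_j=m\,a_{j-(m-1)}$ for $j\ge m-1$. Iterating yields $a_{\ell(m-1)}=(m/x_1)^\ell a_0$ and $a_{1+\ell(m-1)}=(m/x_1)^\ell a_1$, which can be genuine formal (or analytic) power series for every $\ell$ only if $a_0=a_1=0$, since otherwise the pole order along $\{x_1=0\}$ would grow without bound. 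Hence $f_0=0$, $f_1=0$, $W=0$: both $\theta$ and $x_1$ are first integrals of $X$, and $\cL_X\gamma=0$. This division argument, forced by the degenerate leading term $d(\theta^m-x_1\theta)$ rather than $\theta\,d\theta$, is the step I expect to be the main obstacle.

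Once this is in place, the rest is a replay of the transversal proof. Since $\cL_X\gamma=0$, define the associated vector field $Z$ of $\gamma$ by $Z\intprod\omega=\gamma$; then $[Z,X]\intprod\omega=Z\intprod\cL_X\omega-\cL_X(Z\intprod\omega)=-\cL_X\gamma=0$, so $[Z,X]=0$. Because $X$ and $Z$ commute, I would linearize their common associated torus action to put both into Poincar\'e--Dulac normal form simultaneously, and then run the equivariant Moser path of Theorem \ref{thm:NFTransversalContact} on $\omega$ (using $\cL_{Z^s}\omega=\omega$ and $\cL_{X^s}\omega=0$, which follow from Theorem \ref{thm:ConservationProperty}) to bring $\omega$ to its constant part and $\gamma$ into the normal form \eqref{eq:PrimitiveForm}, without destroying the Poincar\'e--Dulac form of $X$. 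All these coordinate changes act only on $x_1,\dots,x_{2n}$ and fix $\theta$, so the pullback of $d\beta$ is $d(\theta^m-f(x)\theta)$, where $f(x)$ is the old coordinate $x_1$ expressed in the new variables. As $x_1$ was a first integral of $X$ and $\theta$ is untouched, both $\theta$ and $f(x)$ remain first integrals of the normalized $X$, giving $\alpha=d(\theta^m-f(x)\theta)+\gamma$ with $\gamma$ in normal form.

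For the ``moreover'' assertion I would argue exactly as at the end of the transversal proof: from $Z\intprod\gamma=Z\intprod(Z\intprod\omega)=0$ one gets $\cL_Z\gamma=\gamma$, hence $\cL_{Z^s}\gamma=\gamma$; since $X$ commutes with $Z^s$ and $\cL_X\gamma=0$, the function $Z^s\intprod\gamma=Z^s\intprod\gamma^{(1)}+R$ is a first integral of $X^s$, and splitting off its quadratic part $Z^s\intprod\gamma^{(1)}=Q$ (using $Z^s\intprod\gamma^s=0$ and $\cL_{Z^s}dQ=dQ$) from its higher-order part $R$ shows that $Q=\sum Q_i$ and $R$ are first integrals of $X^s$. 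The generically tangent case is just $m=3$, i.e. $\alpha=d(\theta^3-x_1\theta)+\gamma$.
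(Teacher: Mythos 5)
Your proposal is correct and takes essentially the same route as the paper: the paper isolates your key step as a separate lemma (Lemma \ref{lemma:Tangent-V.F.}, asserting $f_0=f_1=0$), proved by an induction on the power of $x_1$ dividing $f_0$, which is equivalent to your $\theta$-Taylor-coefficient recursion along $\{x_1=0\}$. The remainder of your argument --- commuting $X$ with $Z$, simultaneous Poincar\'e--Dulac normalization, the Moser path on $\omega$ acting only on the $x$-variables so that the old $x_1$ becomes the first integral $f(x)$, and the transversal-case reasoning for $Q$ and $R$ --- is exactly what the paper does.
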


\begin{proof}
Assume that $X=f_0\dfrac{\partial}{\partial\theta}+\sum_{i=1}^{2n}
f_i\dfrac{\partial}{\partial x_i}$  preserves $\alpha=d(\theta^m-x_1\theta)
+\gamma$. Similar to Lemma \ref{lemma:Contact-V.F.} in the transversal case, 
there is an analogous  lemma (see Lemma \ref{lemma:Tangent-V.F.} below) in 
the tangent singular case, which states that $f_0$ and $f_1$ are 
identically zero. Therefore, we have $\cL_X{d(\theta^m-x_1\theta)}=
\cL_X\gamma=0$ and the functions $\theta$ and $x_1$ are first integrals of $X$.

Using this lemma, the rest of the proof is similar to the one in the 
transversal case:
\begin{itemize}
\item  $X$, viewed as a vector field  on $\{\theta=0\}$, commutes with the 
associated vector field  $Z$ defined by $Z\intprod d\gamma=\gamma$; thus   
$X$ and $Z$ can be put in   Poincar\'e-Dulac normal forms simultaneously;
\item use Moser's path method, and then a linear transformation, to normalize 
$d\gamma$ to the canonical symplectic form without destroying the normal 
forms of $X$ and $Z$.
\end{itemize}
Then the primitive $1$-form is automatically in normal form as in Theorem 
\ref{thm:prenormalization}. Notice that we do not change the variable $\theta$, 
so the old $x_1$ becomes a function $f(x)$ which is still a first integral 
of $X$ and $df(O)\neq0$.
\end{proof}

\begin{lemma}\label{lemma:Tangent-V.F.}
 Assume that $X=f_0\dfrac{\partial}{\partial\theta}+
 \sum_{i=1}^{2n}f_i\dfrac{\partial}{\partial x_i}$ preserves 
 $\alpha=d(\theta^m-x_1\theta)+\gamma$, $m>2$. Then $f_0=f_1=0$.
\end{lemma}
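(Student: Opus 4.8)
The plan is to mirror the proof of Lemma~\ref{lemma:Contact-V.F.} from the transversal case as far as possible, exploiting that the only difference between $\alpha=\theta\,d\theta+\gamma$ and $\alpha=d(\theta^m-x_1\theta)+\gamma$ sits in the $\theta$-dependent part, while the exterior derivative is the same: $d\alpha=d\gamma$. First I would record that $\omega:=d\alpha=d\gamma=\sum_{i=1}^n dx_i\wedge dx_{i+n}$ is a presymplectic form of rank $2n$, involving only the $x$-variables, whose kernel is exactly $\KK\dfrac{\partial}{\partial\theta}$. Since $\cL_X\alpha=0$ gives $\cL_X\omega=d\cL_X\alpha=0$, the field $X$ preserves $\omega$ and hence its kernel; from the identity $\left[\dfrac{\partial}{\partial\theta},X\right]\intprod\omega=\dfrac{\partial}{\partial\theta}\intprod\cL_X\omega-\cL_X\left(\dfrac{\partial}{\partial\theta}\intprod\omega\right)=0$ one concludes, exactly as in Lemma~\ref{lemma:Contact-V.F.}, that $\left[\dfrac{\partial}{\partial\theta},X\right]\in\KK\dfrac{\partial}{\partial\theta}$, so $f_1,\dots,f_{2n}$ are independent of $\theta$.

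The genuinely new input is to extract $f_0=f_1=0$ from the $d\theta$-component of $\cL_X\alpha=X\intprod d\alpha+d(X\intprod\alpha)=0$. Since $X\intprod d\gamma$ lies in the span of the $dx_i$ and carries no $d\theta$, the $d\theta$-component of the equation is simply $\partial_\theta(X\intprod\alpha)=0$. Writing $\alpha=(m\theta^{m-1}-x_1)\,d\theta-\theta\,dx_1+\gamma$ and using that $\gamma$ and, by the previous step, $f_1,\dots,f_{2n}$ are $\theta$-independent, I would compute $X\intprod\alpha=(m\theta^{m-1}-x_1)f_0-\theta f_1+\sum_i g_i f_i$, whose last sum is $\theta$-independent. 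Thus the condition reduces to: $(m\theta^{m-1}-x_1)f_0-\theta f_1$ is independent of $\theta$, say equal to $K(x)$.

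The final step is to expand $f_0=\sum_{k\ge 0}f_0^{[k]}(x)\,\theta^k$ and match the coefficients of $\theta^j$, $j\ge 1$, in $(m\theta^{m-1}-x_1)f_0-\theta f_1=K(x)$. Because $m>2$, the coefficient of $\theta^1$ yields $f_1=-x_1 f_0^{[1]}$; the coefficients for $2\le j\le m-2$ yield $f_0^{[j]}=0$; and the coefficients for $j\ge m-1$ yield the recursion $x_1 f_0^{[\ell+m-1]}=m f_0^{[\ell]}$ for all $\ell\ge 0$. Iterating this recursion gives $f_0^{[\ell]}=(x_1/m)^N f_0^{[\ell+N(m-1)]}$ for every $N$, so each $f_0^{[\ell]}$ is divisible by arbitrarily high powers of $x_1$; in the formal and analytic categories this forces $f_0^{[\ell]}=0$, whence $f_0\equiv 0$ and then $f_1=-x_1 f_0^{[1]}=0$.

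I expect the only subtle point to be this last divisibility argument. It is immediate in the formal and analytic settings, where a nonzero series has finite $x_1$-adic order, but in the smooth category ``divisible by $x_1^N$ for all $N$'' only asserts that each $f_0^{[\ell]}$ is flat along $\{x_1=0\}$, so the smooth case would need an additional flatness or division argument. Since the normal-form results invoking this lemma (Theorem~\ref{thm:prenormalization} and the Poincar\'e--Dulac reduction in Theorem~\ref{thm:NFTangentContact}) are formal/analytic in nature, the formal-analytic conclusion is exactly what is required.
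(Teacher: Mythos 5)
Your proposal is correct and takes essentially the same approach as the paper: the same first step (preservation of the kernel of $d\alpha$ forces $f_1,\ldots,f_{2n}$ to be independent of $\theta$), the same key equation $(m\theta^{m-1}-x_1)f_0-\theta f_1=C(x)$ read off from the $d\theta$-component of $\cL_X\alpha=0$, and the same conclusion via divisibility of $f_0$ by arbitrarily high powers of $x_1$, which forces vanishing in the formal and analytic categories. The only difference is bookkeeping: you match $\theta$-Taylor coefficients to obtain explicit recursions, while the paper inducts on the $x_1$-divisibility of $f_0$ as a whole, using the equation together with its $\theta$-derivative evaluated at $\theta=0$; your caveat about the smooth category likewise matches the paper's own restriction to formal or analytic $f_0$.
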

\begin{proof}
 Since $X$ preserves $\alpha$, it preserves the kernel 
 $\KK \dfrac{\partial}{\partial\theta}$ of $d\alpha=\omega$, which means 
 that $f_1,\ldots,f_{2n}$ are independent of $\theta$.

Consider the component $d\theta$ in $\cL_X\alpha$. Since 
$X\intprod d\alpha=\sum_{i=1}^{2n}f_i\dfrac{\partial}{\partial x_i}
\intprod\omega$ does not contain $d\theta$, the $d\theta$-component must come  
from $dX\intprod\alpha$, more precisely, from the part
\[
d\left(X\intprod d(\theta^m-x_1\theta)\right)=
d\left((m\theta^{m-1}-x_1)f_0-\theta f_1\right).
\]
Thus, we have
\begin{equation}\label{eq:5.19}
\left(\dfrac{\partial(m\theta^{m-1}-x_1)f_0}{\partial\theta}-f_1\right)d\theta=0.
\end{equation}
Now we can write
\begin{equation}
\label{eq:5.20}
(m\theta^{m-1}-x_1)f_0=\theta f_1+C(x)
\end{equation}
where $C(x)$ is a function independent of $\theta$. The coefficient of 
$d\theta$ in \eqref{eq:5.19} vanishes:
\begin{equation}\label{eq:5.21}
(m\theta^{m-1}-x_1)\dfrac{\partial f_0}{\partial\theta}+
m(m-1)\theta^{m-2}f_0-f_1=0.
\end{equation}
Let $\ell$ be any non-negative integer and assume $f_0$ is divisible by 
$x_1^\ell$. Then:
\begin{itemize}
     \item set $\theta=0$ in \eqref{eq:5.20} to conclude that $C(x)$ is 
     divisible by $x_1^{\ell+1}$;
     \item set $\theta=0$ in \eqref{eq:5.21} to conclude that $f_1(x)$ is 
     divisible by $x_1^{\ell+1}$;
     \item go back to equation \eqref{eq:5.20} to conclude that $f_0$ is also 
     divisible by $x_1^{\ell+1}$.
\end{itemize}
We proved inductively that $f_0$ is divisible by any power of $x_1$. Thus $f_0$ 
vanishes if it is a formal or analytic function. So do $f_1(x)$ and $C(x)$.
\end{proof}

\subsubsection{Singular contact forms with integrable systems}
We have seen that singular contact forms are not finitely determined  and we 
cannot find an a priori diffeomorphism that brings them in normal forms 
having finite order jets. However, the complexity of singular contact 
forms  diminishes if they are preserved by some vector fields. The more 
such vector fields, the less freedom for singular contact forms. So if a 
singular contact form is preserved by an integrable system, then the 
singular contact form itself  becomes simpler.

\begin{proposition}
Let $\gamma$ be a primitive $1$-form on $(\KK^{2n}, O)$. Suppose $\gamma$ 
is preserved by $p$ pairwise commuting vector fields $X_1,\ldots,X_p$ whose 
semisimple parts of the linear parts are independent almost everywhere. 
Then  $p\leqslant n$. Moreover, if $p=n$, then $\gamma=\gamma^{s}=
\sum_{i=1}^n(\lambda_ix_idx_{n+i}+(\lambda_i-1)x_{n+i}dx_i)$ is linear and 
the vector fields in the Poincar\'e-Dulac normal forms have diagonal linear 
parts.
\end{proposition}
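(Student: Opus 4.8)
The plan is to reduce the statement to linear algebra in the symplectic Lie algebra $\mathfrak{sp}(2n)$ attached to $\omega = d\gamma$. Since $\cL_{X_j}\gamma = 0$ gives $\cL_{X_j}\omega = d\cL_{X_j}\gamma = 0$, each $X_j$ is symplectic for $\omega$, so its linear part $A_j$ at $O$ lies in $\mathfrak{sp}(2n)$ (the symplectic algebra of the constant form $\omega(O)$). The vector fields commute, hence the $A_j$ commute, and therefore their semisimple parts $S_j := A_j^s$ commute and are semisimple elements of $\mathfrak{sp}(2n)$; thus $\mathfrak h := \mathrm{span}_{\KK}\{S_1,\dots,S_p\}$ is a toral subalgebra. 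To get $p\le n$, note that independence almost everywhere of the $S_j$ (as vector fields) forces, at any single point where it holds, their linear independence as matrices, so $\dim\mathfrak h = p$; since every toral subalgebra of $\mathfrak{sp}(2n)$ has dimension at most the rank $n$, we conclude $p \le n$. This step uses only the linear parts at $O$ and needs no normalization.

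For $p = n$ the subalgebra $\mathfrak h$ is a Cartan subalgebra of $\mathfrak{sp}(2n)$. I would then invoke the simultaneous normalization machinery already used in this section (the associated torus action together with the equivariant Moser path method, exactly as in Theorem~\ref{thm:NFTransversalContact}) to choose coordinates in which $\omega = \omega_0 = \sum_i dx_i\wedge dx_{n+i}$ is canonical, the $S_j$ are simultaneously diagonal so that $\mathfrak h$ is the standard diagonal Cartan spanned by $D_i = x_i\partial_{x_i} - x_{n+i}\partial_{x_{n+i}}$, and both the $X_j$ and the associated vector field $Z$ (defined by $Z\intprod\omega_0 = \gamma$, which commutes with every $X_j$ because $X_j$ preserves both $\gamma$ and $\omega_0$) are in Poincar\'e--Dulac normal form. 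That the linear parts are diagonal then follows from the self-centralizing property of a Cartan subalgebra: the nilpotent part $N_j = A_j - S_j$ commutes with all of $\mathfrak h$, so $N_j \in C_{\mathfrak{sp}}(\mathfrak h) = \mathfrak h$, and being simultaneously nilpotent and an element of a toral subalgebra, $N_j = 0$.

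It remains to show $\gamma = \gamma^s$. Applying the Conservation Theorem~\ref{thm:ConservationProperty} to each $X_j$ (which preserves $\gamma$) gives $\cL_{S_j}\gamma = 0$, equivalently $\cL_{D_i}\gamma = 0$ for all $i$; applying it to $Z$ (which satisfies $\cL_Z\gamma = \gamma$) gives $\cL_{Z^{(1),s}}\gamma = \gamma$. Writing $Z^{(1)} = \mathcal E + W$ with half-Euler field $\mathcal E = \tfrac12\sum_i x_i\partial_{x_i}$ and $W \in \mathfrak{sp}(2n)$, the same self-centralizing property forces $W^s \in \mathfrak h$, whence $\cL_{W^s}\gamma = 0$ and therefore $\cL_{\mathcal E}\gamma = \gamma$. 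As $\cL_{\mathcal E}$ acts on a monomial $x^\alpha dx_k$ with eigenvalue $\tfrac12(|\alpha|+1)$, this forces every term of $\gamma$ to have $|\alpha| = 1$, i.e. $\gamma$ is linear. A linear $1$-form annihilated by all the $D_i$ must have the shape $\sum_i(a_i x_i dx_{n+i} + b_i x_{n+i}dx_i)$, and imposing $d\gamma = \omega_0$ yields $a_i - b_i = 1$; setting $\lambda_i := a_i$ gives exactly $\gamma = \sum_i(\lambda_i x_i dx_{n+i} + (\lambda_i-1)x_{n+i}dx_i) = \gamma^s$.

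The hard part will be the upgrade from the resonance relations $\cL_{D_i}\gamma = 0$, which by themselves still permit higher-order terms, to the strict homogeneity $\cL_{\mathcal E}\gamma = \gamma$: it is only the extra conformal constraint carried by the associated vector field $Z$, combined with the self-centralizing property of the Cartan subalgebra isolating the pure Euler direction, that eliminates those terms and pins $\gamma$ down to be exactly linear. Care is also needed to ensure the Conservation Theorem can be applied to the entire commuting family $\{Z, X_1,\dots,X_n\}$ within one and the same normalizing chart, so that all of the Lie-derivative identities above hold simultaneously.
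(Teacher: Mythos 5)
Your proof is correct, and it reaches all three conclusions by a genuinely different route than the paper's. For the bound $p\leqslant n$, the paper first normalizes everything (invoking the proof of Theorem \ref{thm:NFTransversalContact} and the Conservation Theorem \ref{thm:ConservationProperty}), diagonalizes the $X_i^s$, and extracts the relations $\mu_{ij}+\mu_{ik}=0$ whenever the coefficient $c_{jk}$ of the constant part of $d\gamma$ is nonzero; non-degeneracy of $\omega^{(0)}$ then caps the rank of the matrix $(\mu_{ik})$ at $n$. You instead get $p\leqslant n$ with no normalization at all: the commuting semisimple parts span a toral subalgebra of $\mathfrak{sp}(2n)$, whose dimension is bounded by the rank $n$ --- a cleaner, purely linear-algebraic argument at the origin. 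For $p=n$, the paper works inside the explicit normal form of Theorem \ref{thm:prenormalization}: monomials of $R$ and $Q$ are common first integrals of the diagonal $X_i^s$, hence products of the $x_jx_{n+j}$, and the resonance relation \eqref{eq:ResonantRelationPrimitiveForm} forces total degree $2$, killing $R$ (degree $\geqslant 3$) and then $Q$ (whose normal form excludes $x_jx_{n+j}$ terms). You bypass that fine structure entirely: the self-centralizing property $C_{\mathfrak{sp}}(\mathfrak{h})=\mathfrak{h}$ of the Cartan subalgebra kills the nilpotent parts of the linear parts --- which, incidentally, gives a genuine proof of the ``diagonal linear parts'' claim that the paper asserts but never argues --- and isolates the half-Euler field $\mathcal{E}$ inside $Z^{(1)}$, so the conformal identity $\cL_{Z^s}\gamma=\gamma$ collapses to $\cL_{\mathcal{E}}\gamma=\gamma$, whose eigenvalue computation forces $\gamma$ to be linear; $D_i$-invariance plus $d\gamma=\omega_0$ then pins down $\gamma^s$. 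Both proofs rest on the same two pillars, namely the simultaneous Poincar\'e--Dulac normalization of the commuting family $\{Z,X_1,\ldots,X_p\}$ together with the canonicalization of $\omega$ (the machinery of Theorem \ref{thm:NFTransversalContact}), and the Conservation Theorem \ref{thm:ConservationProperty}; so your deferral to that machinery is no weaker than the paper's own appeal to it. What your route buys is conceptual economy (standard Lie theory replaces the case analysis of the $Q_i$ and $R$) and completeness on the diagonality statement; what the paper's route buys is an elementary, explicit identification of exactly which resonant monomials could survive.
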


\begin{proof}
From the proof of Theorem \ref{thm:NFTransversalContact}, we can assume that 
the primitive $1$-form and the vector fields are all in normal forms, i.e., 
$\gamma$ is as in Theorem \ref{thm:prenormalization} and $[X_i,X_j^s]=0$ for 
all $i,j$. By the Conservation Theorem \ref{thm:ConservationProperty}, 
$\gamma$ is also preserved by the semisimple parts $X_1^s,\ldots,X_p^s$ of 
the vector fields. After a linear transformation, we can assume 
$X_i^s=\sum_{k=1}^n\mu_{ik}x_k\dfrac{\partial}{\partial x_k}$ for 
$i=1,\ldots,p$.

Using again the equation $\cL_{X_i^s}\omega^{(0)}=\cL_{X_i^s}d\gamma=
d\cL_{X_i^s}\gamma=0$, where 
$$
\omega^{(0)}=\sum\limits_{1\leqslant j<k\leqslant 2n}c_{jk}dx_j\wedge dx_k
$$ 
is the constant part of the symplectic form $d\gamma$, we get
\begin{equation}
\label{eq:mu}
 \mu_{ij}+\mu_{ik}=0, \,\mbox{~if~} c_{jk}\neq0,\quad i=1,\ldots,p.
\end{equation}
Since $X_1^s,\ldots,X_p^s$ are independent almost everywhere, the rank of the 
$p\times(2n)$ matrix $(\mu_{ik})$ is $p$ and because $\omega^{(0)}$ is 
non-degenerate, among all the $c_{jk}$'s there are at least $n$ that do not 
vanish.  Thus \eqref{eq:mu} implies that $p\leqslant n$.

If the number of vector fields is maximal, i.e., $p=n$, then we have exactly 
$n$ among all of the  $c_{jk}$'s that do not vanish. The set of subscripts 
$j,k$ for the $n$ nonzero $c_{jk}$'s must be included in the set 
$\{1,2,\ldots,2n\}$ by non-degeneracy. Without loss of generality, we 
assume that $c_{j\,n+j}\neq0$ for all $j=1,\ldots,n$. By \eqref{eq:mu}, 
this implies $\mu_{ij}+\mu_{i\,n+j}=0$ for all $i=1,\ldots,p=n$.

The common (formal or analytic) first integrals are generated by the quadratic 
functions $x_jx_{n+j}$. Then any monomial term of $H$ has the expression 
$\prod_{j=1}^nx_j^{\ell_j}x_{n+j}^{\ell_j}$ since $H$ is automatically a 
first integral. By the resonance relation 
\eqref{eq:ResonantRelationPrimitiveForm} on the indices, we have
\[
 \sum_{j=1}^n(\lambda_j\ell_j+(1-\lambda_j)\ell_j)=\sum_{j=1}^n\ell_j=1.
\]
It thus follows that there is only one $\ell_j=1$ and the others are zero, 
which implies that $R=0$ since we request, a priori, that  the summands of 
$R$ have degree at least $3$. Hence, the primitive $1$-form now reads 
$\alpha=\gamma^{s}+dQ$. Moreover, the quadratic polynomial is also a 
common first integral of $X_i^{s}$'s. Thus $Q=\sum_{j=1}^n c_jx_jx_{n+j}$, 
where the $c_j$'s are constant coefficients. This implies $c_j=0$ since $Q$ 
does not contain any term of the form $x_jx_{n+j}$  in our normal form.
\end{proof}

We can get similar results for a transversal singular contact form $\alpha$ 
under the same assumptions.  By Theorem \ref{thm:NFTransversalContact}, 
we can assume that in a suitable coordinate system $(\theta,x_1,\ldots,x_{2n})$, 
$\alpha$ is in normal form  and the vector fields $X_1,\ldots,X_p$ are all in 
Poincar\'e-Dulac normal forms. By an argument similar to that in Lemma 
\ref{lemma:Contact-V.F.}, we can conclude that $X_1,\ldots,X_p$ are independent 
of $\theta$ and have $\theta$ as a common first integral. Therefore, 
$X_1,\ldots,X_p$ also preserve the primitive $1$-form $\gamma$ and we have 
$p\leqslant n$; if $p=n$, then $\gamma$ is linear. This proves the following 
result.
\begin{proposition}
Let $\alpha$ be a transversal singular contact form on $(\KK^{2n+1}, O)$. 
Suppose $\alpha$ is preserved by $p$ pairwise commuting vector fields 
$X_1,\ldots,X_p$ whose semisimple parts of the linear parts are independent 
almost everywhere. Then $p\leqslant n$. If $p=n$, then $\alpha=\theta d\theta
+\gamma^{s}$ is linear.
\end{proposition}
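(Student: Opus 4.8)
The plan is to reduce the assertion to the preceding Proposition on primitive $1$-forms by descending everything to the singular hyperplane $\{\theta=0\}$. First I would invoke Theorem \ref{thm:prenormalization} in the transversal case to choose a coordinate system $(\theta,x_1,\ldots,x_{2n})$ in which $\alpha=\theta d\theta+\gamma$, where $\gamma=\sum_{i=1}^{2n}g_i\,dx_i$ is basic with respect to $\partial/\partial\theta$ and $d\gamma$ is symplectic on $\{\theta=0\}\cong\KK^{2n}$. It is enough to bring $\alpha$ into this shape; there is no need to simultaneously normalize the family $X_1,\ldots,X_p$ at this point, because the joint normalization is carried out internally by the preceding Proposition (via the associated torus action of the whole commuting family, exactly as in the proof of Theorem \ref{thm:NFTransversalContact}).

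Next, since each $X_i$ preserves $\alpha$, I would apply Lemma \ref{lemma:Contact-V.F.} to each field separately; this lemma holds verbatim for the transversal form, as observed in the proof of Theorem \ref{thm:NFTransversalContact}. Writing $X_i=f_0^{(i)}\partial/\partial\theta+\sum_{j=1}^{2n}f_j^{(i)}\partial/\partial x_j$, the lemma forces $f_0^{(i)}\equiv 0$ and makes every $f_j^{(i)}$ independent of $\theta$. Consequently $\theta$ is a common first integral and each $X_i$ descends to a vector field on $\{\theta=0\}$. From $\cL_{X_i}\alpha=0$ together with $X_i(\theta)=0$ (so that $\cL_{X_i}(\theta\,d\theta)=0$) I obtain $\cL_{X_i}\gamma=\cL_{X_i}(\alpha-\theta\,d\theta)=0$; hence the descended fields preserve the primitive $1$-form $\gamma$ on $\KK^{2n}$.

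It then remains to check that the descended family meets the hypotheses of the preceding Proposition and to quote its conclusion. Pairwise commutativity survives the descent trivially. For the independence condition I would note that, since each $X_i$ has vanishing $\partial/\partial\theta$-component and $\theta$-independent coefficients, the matrix of its linear part is block diagonal with a zero block in the $\theta$-direction; therefore its semisimple part descends faithfully, and the $X_i^s$ are independent almost everywhere on $\{\theta=0\}$ precisely when they are on $\KK^{2n+1}$. Applying the preceding Proposition to $(\gamma;X_1,\ldots,X_p)$ then gives $p\leqslant n$, and in the extremal case $p=n$ forces $\gamma=\gamma^s$ to be linear, whence $\alpha=\theta d\theta+\gamma^s$ is linear.

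I expect the only real difficulty to be bookkeeping rather than analysis: one must confirm that the three properties needed downstream, namely pairwise commutativity, preservation of $\gamma$, and almost-everywhere independence of the semisimple linear parts, are genuinely inherited by the restricted fields, and that passing from $\alpha$ to $\gamma$ discards no information (the term $\theta\,d\theta$ being determined by the singular structure). The substantive work, namely the simultaneous toric normalization and the dimension count yielding $p\le n$, has already been discharged in the preceding Proposition and in Theorems \ref{thm:prenormalization} and \ref{thm:NFTransversalContact}, so the present argument is essentially a descent.
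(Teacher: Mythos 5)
Your proposal is correct and follows essentially the same route as the paper: reduce to the hyperplane $\{\theta=0\}$ via (the transversal-case extension of) Lemma \ref{lemma:Contact-V.F.}, observe that $\cL_{X_i}\gamma=0$, and invoke the preceding proposition on primitive $1$-forms to get $p\leqslant n$ and the linearity of $\gamma$ when $p=n$. The only difference is cosmetic: the paper enters through the simultaneous normalization Theorem \ref{thm:NFTransversalContact} for the whole family, whereas you enter through Theorem \ref{thm:prenormalization} for $\alpha$ alone and let the joint normalization happen inside the preceding proposition, which works equally well (and your explicit check that commutativity and almost-everywhere independence of the semisimple linear parts survive the descent is a detail the paper leaves implicit).
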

We note that the proposition is not true in the generic tangential case. The 
maximum possible number of such non-degenerate vector fields is $n-1$ since 
we can exclude the situation $p=n$. Otherwise, we use Lemma 
\ref{lemma:Tangent-V.F.} and then the vector fields preserve the 
primitive $1$-form $\gamma$, which guarantees that the common first 
integrals of the semisimple parts of the vector fields are functions of 
the $x_ix_{n+i}$'s. On the other hand, we request $f(x)$ in 
$\alpha=d(\theta^3-f\theta)+\gamma$ to be a first integral of the 
semisimple parts of the vector fields, which contradicts the 
non-degeneracy of $f(x)$, in the sense that $df(0)\neq0$.

\section*{Acknowledgements}

Part of this work was done during Nguyen Tien Zung's visits to
Shanghai Jiaotong University (SJTU) in 2018 and 2019 under
a ``High-End Foreign Experts Project" of China. He thanks the
members of the SJTU School of Mathematical Sciences, especially 
Prof. Xiang Zhang and Jie Hu,
for their warm hospitality and excellent working conditions.

\end{document}